\theoremstyle{plain}
\newtheorem{theorem}{Theorem}[section]
\newtheorem{lemma}[theorem]{Lemma}
\newtheorem{corollary}[theorem]{Corollary}
\theoremstyle{definition}
\newtheorem{definition}[theorem]{Definition}
\newtheorem{remark}[theorem]{Remark}
\newcommand{\R}{\mathbb{R}}
\newcommand{\f}[2]{\frac{#1}{#2}}
\newcommand{\pa}{\partial}
\newcommand{\half}{\f{1}{2}}
\newcommand{\abs}[1]{\left\lvert#1\right\rvert} 
\newcommand{\norm}[1]{\left\lVert#1\right\rVert} 
\newcommand{\ip}[2]{\left\langle #1\,,#2\right\rangle} 
\DeclareMathOperator*{\argmin}{arg\, min}
\DeclareMathOperator{\var}{var}
\DeclareMathOperator{\trace}{trace}
\DeclareMathOperator{\erfc}{erfc}
\newcommand{\pvar}{p-\mathrm{var}}
\newcommand{\E}{E}
\DeclareMathOperator{\spn}{span}
\renewcommand{\doteq}{\coloneqq}
\renewcommand{\hat}{\widehat}
\numberwithin{equation}{section}
\author[C. Bayer]{Christian Bayer}
\address{Weierstrass Institute, Mohrenstr.~39, 10117 Berlin, Germany}
\email{christian.bayer@wias-berlin.de}
\author[D. Belomestny]{Denis Belomestny}
\address{Universit\"{a}t Duisburg-Essen, Fachbereich Mathematik,
  Thea-Leymann-Str.~9, 45127 Essen, Germany}
\email{denis.belomestny@uni-due.de}
\author[M. Redmann]{Martin Redmann}
\address{Weierstrass Institute, Mohrenstr.~39, 10117 Berlin, Germany}
\email{martin.redmann@wias-berlin.de}
\author[S. Riedel]{Sebastian Riedel}
\address{TU Berlin, Institut f\"{u}r Mathematik, Stra\ss{}e des 17. Juni 136,
  10623 Berlin, Germany}
\email{riedel@math.tu-berlin.de}
\author[J. Schoenmakers]{John Schoenmakers}
\address{Weierstrass Institute, Mohrenstr.~39, 10117 Berlin, Germany}
\email{john.schoenmakers@wias.berlin.de}
\title{Solving linear parabolic rough partial differential equations}
\thanks{C.B., M.R., S.R., and J.S.~gratefully acknowledge support from the DFG
through the research unit FOR2402.}
\subjclass[2010]{Primary 65C30; Secondary 65C05, 60H15}
\keywords{rough paths, rough partial differential equations, Feynman-Kac
  formula, regression}
\begin{document}

\begin{abstract}
  We study linear rough partial differential equations in the setting of [Friz
  and Hairer, Springer, 2014, Chapter 12]. More precisely, we consider a
  linear parabolic partial differential equation driven by a deterministic
  rough path $\mathbf{W}$ of H\"{o}lder regularity $\alpha$ with $1/3 < \alpha
  \le 1/2$.  Based on a stochastic representation of the solution of the rough
  partial differential equation, we propose a regression Monte Carlo algorithm
  for spatio-temporal approximation of the solution.  We provide a full
  convergence analysis of the proposed approximation method which essentially
  relies on the new bounds for the higher order derivatives of the solution in
  space. Finally, a comprehensive simulation study showing the applicability
  of the proposed algorithm is presented.
\end{abstract}

\maketitle

\section{Introduction}
\label{sec:introduction}

We consider linear rough partial differential equations in the setting of
Friz and Hairer~\cite[Chapter 12]{FH14}, see also Diehl, Oberhauser,
Riedel~\cite{DOR15} and Diehl, Friz and Stannat~\cite{DFS17}, i.e.,
\begin{gather*}
  -du = L(u) \, dt + \sum_{k=1}^d \Gamma_k(u) d\mathbf{W}^k,\\
  u(T, \cdot) = g,
\end{gather*}
where the differential operators $L$ and $\Gamma = (\Gamma_1, \ldots,
\Gamma_d)$ are defined by
\begin{gather*}
  Lf(x) = \half \trace\left( \sigma(x) \sigma(x)^\top D^2 f(x) \right) +
  \ip{b(x)}{Df(x)} + c(x) f(x),\\
  \Gamma_kf(x) = \ip{\beta_k(x)}{Df(x)} + \gamma_k(x) f(x),
\end{gather*}
see Section~\ref{sec:stoch-repr} for more details. We stress here that
$\mathbf{W}$ is a \emph{deterministic} rough path (of H\"{o}lder regularity
$\alpha$ with $1/3 < \alpha \le 1/2$), i.e., the PDE above is considered as a
deterministic, not a stochastic equation. (This does not, of course, preclude
choosing individual trajectories produced by a stochastic process, say a
fractional Brownian motion.)

The goal of this paper is to provide a numerical algorithm for solving the
above rough partial differential equation together with a proper numerical
analysis of the algorithm and numerical examples. More precisely, we want to
approximate the function $(t,x) \mapsto u(t,x)$ as a linear combination of
some easily computable basis functions depending on \(x\) with time dependent
coefficients. Such approximations can then be, for example, used to solve
optimal control problems for linear rough PDEs. In this respect, our approach
can be viewed as an alternative to the space-time Galerkin proper orthogonal
decomposition method used to solve optimal control problems for the standard
linear parabolic PDEs (see, e.g. \cite{baumann2016space} and references
therein).  We analyze the corresponding approximation error which turns out to
depend on smoothness properties of the solution \(u.\) As a by-product of this
analysis, we also proved regularity of the solution $u$ in $x$ of degree
larger than $1$ under suitable conditions.

Rough partial differential equations of the above kind appear under the name
Zakai equation in filtering, see, for instance,
Pardoux~\cite{pardoux1991filtrage}. There, $W$ corresponds to an actually
observed path, and we would like to get information on an underlying
unobserved path driving the dynamics of $W$. Hence, by its nature, $W$ is a
deterministic path as far as the Zakai equation is concerned. Note that,
generally speaking, the Zakai equation is only of the form of a partial
differential equation if the underlying model is a diffusion type, which
essentially implies that $W$ is (one trajectory of) a Brownian motion. But
even in this case the use of rough path theory makes sense, as it guarantees
continuity of the solution in terms of the driving noise, and hence allows for
a pathwise solution. If, for instance, the underlying system for the filtering
problem is a fractional Brownian motion, then the corresponding Zakai equation
will be path-dependent, see, for instance, Coutin and
Decreusefond~\cite{coutin1999} for the case of Hurst index $H > \half$. The
case $H < \half$ still seems to be open.

\subsection{Literature review}
\label{sec:literature-review}

Terry Lyons'~\cite{lyons98} theory of rough paths provides a deterministic,
pathwise analysis of stochastic ordinary differential equations. This has
interesting consequences both from a theoretical point of view---often based
on the continuity of the solution w.r.t.~the driving noise (which is not true
in the classical stochastic analysis framework)---and from a practical point
of view---see, for instance, \cite{lyons2014rough}. We refer to
\cite{FV10,FH14} for background information on rough path theory.

Nonetheless, the seemingly obvious step from rough ODEs the rough PDEs turns
out to be quite difficult, mainly because of two essential limitations of
standard rough path theory: regularity of the vector fields driving the
differential equation (lacking in the case of (unbounded) partial differential
operators), and the restriction to paths, i.e., functions parametrized by a
one-dimensional variable. While not relevant for this paper, we should mention
that the second restriction was overcome by seminal work of
Hairer~\cite{hairer13}, thereby allowing space-time noise.

Despite those difficulties, rough partial differential equations (driven by a
true path, i.e., a ``noise'' component only depending on time, but not space)
have become a thriving field in the last few years, and several approaches
have been developed to extend rough path analysis to rough PDEs (RPDEs). Most
approaches are based on transformations of the problem separating the
roughness of the drivers from the non-regularity of the differential
operators. A series of papers by Friz and co-authors derives existence and
uniqueness results for some classes of RPDEs by applying a flow-transformation
to a classical PDE (with random coefficients), for instance see
\cite{CFO11,FGLS17}. Other works in this flavour are based on mild
formulations of the RPDE, e.g., Deya, Gubinelli and Tindel~\cite{DGT12}.

Some more recent works have focused on more intrinsic formulations of rough
PDEs, trying to extend classical PDE techniques to the rough PDE context. This
paper is based on the Feynman-Kac approach of Diehl and co-authors
\cite{DFS17,DOR15,FH14}, which is presented in more detail in
Section~\ref{sec:stoch-repr}. In a quite different vein, Deya, Gubinelli,
Hofmanov\'{a} and Tindel~\cite{DGHT16} have provided a rough Gronwall lemma,
which makes classical approaches to weak solutions of PDEs accessible.

Despite the increasing interest in rough PDEs, so far no numerical schemes
have been suggested to the best of our knowledge. In this context, let us
again mention \cite{DGHT16}, which could open up the field to finite element
methods, as it provides variational techniques for some classes of rough
PDEs. Of course, an abundance of numerical methods exist for classical
stochastic PDEs and PDEs with random coefficients, see, for instance,
\cite{Kruse14}.

In this work, we will use the stochastic representation of~\cite{DFS17} in
order to build a regression based approximation of the solution $u(t, \cdot)$
of the rough PDE, a technique that has been successfully applied both to stochastic PDEs
by Milstein and Tretyakov~\cite{MT12} and to PDEs
with random coefficients by Anker et al.~\cite{Ank2017}.

\subsection{Outline of the paper and main results}
\label{sec:outline-paper}

Diehl, Friz and Stannat~\cite{DFS17} provide a solution theory to the rough
partial differential equation above by means of a 
\emph{stochastic representation}, i.e., they construct a stochastic process
$X$ which is driven by a stochastic rough path $\mathbf{Z}$ constructed from a
Brownian motion $B$ and our rough path $\mathbf{W}$ driving the rough PDE. The
solution $u$ of the rough PDE then is given as a conditional expectation of a
functional of $X$, see Section~\ref{sec:stoch-repr} and, in particular,
\eqref{eq:stoch_rep} for more details.

For the numerical approximation of $u(t, \cdot)$, it is very important to
understand the regularity of this map. Note that the regularity in $t$ quite
clearly corresponds to the regularity of the driving path $\mathbf{W}$,
whereas regularity in space alone can be much better depending on the
coefficients and the terminal data $g$. In the theoretical work~\cite{DFS17},
spacial regularity of $u$ is obtained from regularity of the stochastic
process $X$ in its initial value $X_0 = x$, which is well understood for rough
differential equations. In order to show regularity of $u$ one, however, needs
to interchange differentiation with expectation, and the required
integrability conditions were only available for the first derivative (see
Cass, Litterer, Lyons~\cite{CLL13}), but not for higher derivatives. In
Section~\ref{sec:regularity-solution} we extend these results to
higher derivatives, which enables us to show the following theorem, see
Corollary~\ref{cor:main}:
\begin{theorem}
  \label{thr:main-regularity}
  Let $u(t,x)$ be as above. Assume that $g$ is $k$-times differentiable with
  $g$ and its derivatives having at most exponential growth.  Assume further
  that $b$ and $\beta$ are bounded, $(2 + k)$-times continuously
  differentiable with bounded derivatives, $\sigma$ is bounded with
  $(3+k)$-times continuously differentiable with bounded derivatives, and
  $\gamma$ and $c$ are bounded, $k$-times continuously differentiable with
  bounded derivatives. Then $u(t, \cdot)$ is $k$ times continuously
  differentiable and we provide explicit bounds on the derivatives.
\end{theorem}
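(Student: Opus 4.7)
The plan is to proceed from the Feynman--Kac--type stochastic representation alluded to in Section~\ref{sec:stoch-repr}: write $u(t,x) = \E\left[ g(X_T^{t,x}) \, Y_T^{t,x} \right]$, where $X^{t,x}$ is the (rough) forward process started at $x$ and driven by the joint rough path $\mathbf{Z}$ built from $B$ and $\mathbf{W}$, and $Y^{t,x}$ is the multiplicative correction arising from the zero-order terms $c$ and $\gamma$ (a geometric/Doléans-Dade type exponential of a rough integral against $\mathbf{Z}$). Given this representation, proving $C^k$-regularity of $u(t,\cdot)$ reduces to two tasks: (i) showing that the integrand is almost-surely $k$-times differentiable in the initial condition $x$ with a closed-form expression for its derivatives, and (ii) producing $L^1$ bounds on these derivatives that are locally uniform in $x$ so that differentiation and expectation can be interchanged.

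For (i), I would apply Faà di Bruno's formula to $x \mapsto g(X_T^{t,x})\,Y_T^{t,x}$. This yields, for each multi-index of order $j \le k$, a finite sum of terms of the form
\begin{equation*}
D^{|\alpha|} g(X_T^{t,x}) \cdot \prod_{i} D_x^{j_i} X_T^{t,x} \cdot D_x^{m} Y_T^{t,x},
\end{equation*}
where the $D_x^{j_i} X_T^{t,x}$ are Jacobian processes of the rough flow. Their existence and the recursive rough differential equations they satisfy follow from the standard $C^k$-flow theory for rough differential equations (as developed in \cite{FV10,FH14}), which is applicable under the stated regularity on $\sigma, b, \beta$ (the $+3$ and $+2$ shifts account for the usual loss of one derivative in forming the vector fields plus the loss needed to get a $C^k$-flow). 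The derivatives $D_x^m Y_T^{t,x}$ are handled analogously since $Y$ satisfies a linear rough differential equation whose coefficients depend on $X$ and on $\gamma, c$.

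Step (ii) is where the main difficulty lies and where the contribution of Section~\ref{sec:regularity-solution} enters. For the first derivative the required integrability is precisely the content of Cass--Litterer--Lyons~\cite{CLL13}: the Jacobian of a rough flow has all moments, the key estimate being Gaussian-type tails for the accumulated $p$-variation of the Gaussian driver $B$. For higher derivatives the Jacobian processes $D_x^j X^{t,x}$ solve linear rough equations whose inhomogeneities are polynomial expressions in the lower-order Jacobians, so I would proceed inductively: assuming $L^p$-bounds (for all $p$) on $D_x^{j'} X, D_x^{j'} Y$ for $j' < j$, derive $L^p$-bounds for $D_x^j X$ and $D_x^j Y$ by Grönwall/Lyons-type estimates for linear rough equations, combined with exponential integrability of the relevant $p$-variation norm of $\mathbf{Z}$. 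Since $b,\beta,\sigma,\gamma,c$ and their derivatives up to the relevant order are bounded, the quantitative bounds become explicit and depend only on norms of the coefficients and of $\mathbf{W}$.

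With (i) and (ii) in hand, the hypothesis that $g$ and its derivatives up to order $k$ have at most exponential growth combines with the all-moments bounds on $X_T^{t,x}$, $D_x^j X_T^{t,x}$, $D_x^j Y_T^{t,x}$ via Hölder's inequality to give a locally $x$-uniform $L^1$-majorant. Dominated convergence then justifies interchanging $D_x^j$ and $\E[\cdot]$, yielding an explicit formula for $D^j u(t, \cdot)$ and the announced bounds. The main obstacle, as indicated, is the higher-order extension of the Cass--Litterer--Lyons integrability result; once that is in place the rest of the argument is a careful but essentially mechanical combination of Faà di Bruno, linear rough flow estimates, and dominated convergence.
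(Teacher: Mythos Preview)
Your proposal is correct and follows essentially the same route as the paper: the stochastic representation, Fa\`a di Bruno applied to $g(X_T^{t,x})Y_T^{t,x}$, the observation that the higher Jacobians $D_x^j X$ solve linear inhomogeneous RDEs with inhomogeneities polynomial in the lower-order ones, and the inductive integrability argument extending Cass--Litterer--Lyons. One point worth sharpening is that the ``exponential integrability of the relevant $p$-variation norm'' must be phrased in terms of the greedy-partition count $N(\mathbf{Z};[t,T])$ rather than $\|\mathbf{Z}\|_{p\text{-var}}^p$ itself (the latter does \emph{not} have exponential moments for $p>2$); the paper's Theorem~\ref{thm:bounds_flow_deriv} is precisely the deterministic bound $\|X^{(k)}\| \lesssim \exp(C N(\mathbf{Z}))$ obtained by variation of constants, which then feeds into the moment-generating-function estimate for $N$ (Lemma~\ref{lem:mgf-N}) exactly as you outline.
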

For any fixed \(t>0,\) the spacial resolution of the function $x \mapsto
u(t,x)$ can be approximated using regression with respect to properly chosen
basis functions $\psi_1, \ldots, \psi_K$, $K \in \mathbb{N}$. More precisely,
given a specific probability measure $\mu$ on the state space $\R^n$, we try
to minimize the error in the sense of $L^2(\mu)$, i.e., we would like to find
\begin{equation*}
  \argmin_{\tilde{v} \in \spn\{\psi_1, \ldots, \psi_K\}} \int_{\R^n}
  \abs{u(t,x) - \tilde{v}(x)}^2 \mu(dx).
\end{equation*}
The above loss function can, however, only serve as a guiding principle, since
$u(t,x)$ is not available to us. Instead, we replace the above loss function
by a proper Monte Carlo approximation: denoting the actual stochastic
representation of $u$ by $\mathcal{V} = \mathcal{V}(t,x,\omega)$ in the sense
that $u(t,x) = E[\mathcal{V}(t,x)]$, we consider samples $\mathcal{V}^{(m)}$
of $\mathcal{V}$ obtained by
\begin{enumerate}
\item sampling initial values $x^{(m)}$ according to the distribution $\mu$;
\item sampling the solution of $X$ started at $X_t = x^{(m)}$ driven by \label{samplingprocedure}
  independent (of each other and of $x^{(m))}$) samples of the Brownian
  motion. 
\end{enumerate}
Finally, we construct an approximation $\tilde{v}$ of $u(t,\cdot)$ by
(essentially) solving the least squares problem
\begin{equation*}
  \argmin_{\tilde{v} \in \spn\{\psi_1, \ldots, \psi_K\}} \sum_{m=1}^M \f{1}{M}
  \abs{\mathcal{V}^{(m)} - \tilde{v}(x^{(m)})}^2.
\end{equation*}
(In addition to the ``true'' stochastic regression as indicated here, we actually
also use a ``pseudo-regression'' proposed in~\cite{Ank2017} for PDEs with random
coefficients and presented in detail in Section~\ref{sec:regression}.) We
obtain (cf.~Theorem~\ref{psth}):
\begin{theorem}
  \label{thr:main-2}
  Under some boundedness conditions on the solutions and its stochastic
  representation, there is a constant $C > 0$ (which can be made explicit)
  such that
  \begin{equation*}
    E\left[ \int_{\R^n} \abs{u(t,x) - \tilde{v}(x)}^2 \mu(dx) \right] \le C
    \f{K}{M} + \inf_{w \in \spn\{\psi_1, \ldots, \psi_K\}} \int_{\R^n}
    \abs{u(t,x) - w(x)}^2 \mu(dx).
  \end{equation*}
\end{theorem}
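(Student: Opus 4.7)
The plan is a classical bias-variance decomposition for linear least-squares regression with Monte Carlo responses, carefully adapted to the fact that the samples $\mathcal{V}^{(m)}$ are unbiased for $u(t, x^{(m)})$. First I would write the minimizer in explicit form. With the design matrix $\Psi \in \R^{M \times K}$ defined by $\Psi_{mk} = \psi_k(x^{(m)})$ and the response vector $Y_m = \mathcal{V}^{(m)}$, the empirical Gram matrix $G_M \doteq M^{-1}\Psi^\top\Psi$ is almost surely invertible (or is made so by a small ridge/pseudo-inverse step as in the ``pseudo-regression'' of \cite{Ank2017}), and $\tilde v = \sum_{k=1}^K \hat\alpha_k \psi_k$ with $\hat\alpha = (\Psi^\top \Psi)^{-1} \Psi^\top Y$.

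Next I would perform the decomposition. Let $V_K \doteq \spn\{\psi_1,\ldots,\psi_K\}$ and let $w^\star \in V_K$ be the $L^2(\mu)$-orthogonal projection of $u(t,\cdot)$. Writing $\mathcal{V}^{(m)} = u(t,x^{(m)}) + \varepsilon^{(m)}$ with $E[\varepsilon^{(m)} \mid x^{(m)}] = 0$ (the Feynman-Kac identity $u(t,x) = E[\mathcal{V}(t,x)]$), linearity of least squares gives $\tilde v = \hat v_{\mathrm{bias}} + \hat v_{\mathrm{var}}$, where $\hat v_{\mathrm{bias}}$ is the empirical LS-fit to the noise-free targets $u(t,x^{(m)})$ and $\hat v_{\mathrm{var}}$ is the empirical LS-fit to the centered noise $\varepsilon^{(m)}$. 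By Pythagoras in $L^2(\mu)$,
\begin{equation*}
  E\norm{u(t,\cdot) - \tilde v}_{L^2(\mu)}^2
  = \inf_{w \in V_K} \norm{u(t,\cdot) - w}_{L^2(\mu)}^2
  + E\norm{w^\star - \hat v_{\mathrm{bias}}}_{L^2(\mu)}^2
  + E\norm{\hat v_{\mathrm{var}}}_{L^2(\mu)}^2,
\end{equation*}
cross-terms vanishing because $\varepsilon^{(m)}$ are centered conditionally on $x^{(m)}$, independent across $m$, and because $u(t,\cdot)-w^\star$ is $L^2(\mu)$-orthogonal to $V_K$. The first summand is already the desired infimum.

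The variance summand is handled by conditioning on $(x^{(m)})_{m=1}^M$. By the assumed boundedness of $\mathcal{V}$ (and hence of $u$), the conditional covariance $\Sigma$ of the $\varepsilon^{(m)}$ is bounded by a constant $\sigma^2 I_M$. Then
\begin{equation*}
  E\bigl[\norm{\hat v_{\mathrm{var}}}_{L^2(\mu)}^2 \bigm| x^{(1:M)}\bigr]
  = \tfrac{1}{M}\trace\bigl(G \, G_M^{-1} (\Sigma/M) G_M^{-1}\bigr)
  \le \tfrac{\sigma^2}{M}\trace\bigl(G \, G_M^{-1}\bigr),
\end{equation*}
where $G_{ij} = \int \psi_i \psi_j \, d\mu$. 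Whenever $G_M$ is comparable to $G$ (condition number bounded by a universal constant), this trace is bounded by $cK$, yielding the desired $O(K/M)$ bound. The analogous argument for $\hat v_{\mathrm{bias}} - w^\star$ uses that $\hat v_{\mathrm{bias}}$ is the empirical $L^2$-projection of a fixed bounded function.

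The main obstacle, and the place where care is genuinely needed, is the control of the empirical Gram matrix $G_M$ on the bad event $\{G_M \not\approx G\}$, where $G_M^{-1}$ can blow up. The remedy I would adopt is the standard truncation / matrix concentration argument: assuming the $\psi_k$ are uniformly bounded on the support of $\mu$ (so that matrix Bernstein yields $\|G_M - G\| \lesssim \sqrt{K/M}$ with exponentially small deviation probability), one restricts to the event where $G_M$ is well-conditioned and absorbs the bad event into the constant $C$ by truncating the estimator at the level of $\|u(t,\cdot)\|_\infty$. Combining the three pieces gives the stated inequality with a constant $C$ that depends explicitly on $\|\mathcal{V}\|_\infty$, on $\sigma^2$, and on the conditioning of $G$.
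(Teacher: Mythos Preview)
Your proposal analyzes the wrong estimator. The object $\tilde v$ in the theorem is the \emph{pseudo-regression} estimator (equations~\eqref{semstoch}--\eqref{gass} in the paper): the coefficients are $\tilde\gamma = \tfrac{1}{M}\mathcal{G}^{-1}\mathcal{M}^\top\mathcal{Y}$, where $\mathcal{G}_{kl}=\int\psi_k\psi_l\,d\mu$ is the \emph{population} Gram matrix, not the empirical $G_M = \tfrac{1}{M}\Psi^\top\Psi$. Your parenthetical ``or is made so by a small ridge/pseudo-inverse step'' misreads what pseudo-regression means here---it is not a regularization of $G_M$, it is a wholesale replacement of $G_M$ by its deterministic limit $\mathcal{G}$.

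This distinction is exactly what makes the paper's argument short. Because $\mathcal{G}$ is deterministic, $E[\tilde\gamma]=\mathcal{G}^{-1}\alpha=\gamma^{\circ}$ equals the true $L^2(\mu)$-projection coefficients exactly; hence Pythagoras gives the infimum term with constant $1$, and the remaining term is just $E|\tilde\gamma-\gamma^{\circ}|^2_{\mathcal{G}} = E\big[(\tfrac{1}{M}\mathcal{M}^\top\mathcal{Y}-\alpha)^\top\mathcal{G}^{-1}(\tfrac{1}{M}\mathcal{M}^\top\mathcal{Y}-\alpha)\big]$, which is bounded by $\tfrac{1}{\underline{\lambda_{\min}}}\sum_k\operatorname{Var}[\tfrac{1}{M}\mathcal{M}^\top\mathcal{Y}]_k\le \tfrac{\overline{\lambda_{\max}}}{\underline{\lambda_{\min}}}(\sigma^2+A^2)\tfrac{K}{M}$ by a direct variance computation. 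No matrix concentration, no bad events, no truncation.

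Your route---matrix Bernstein for $G_M$, truncation on the bad set, separate control of $\hat v_{\mathrm{bias}}-w^\star$---is the machinery appropriate for the \emph{full} least-squares estimator $\widehat u$ (equation~\eqref{fullstoch}). The paper does not prove the bound of Theorem~\ref{psth} for that estimator; instead it invokes \cite[Theorem~11.3]{gyorfi2002distribution}, which yields the weaker bound~\eqref{eq:upper_bound_regr} with an extra $\log M$ factor and constant $8$ in front of the infimum. In particular, your sketch of the bias piece (``the analogous argument for $\hat v_{\mathrm{bias}}-w^\star$'') is where your version would fail to recover constant $1$: the empirical projection is only close to $w^\star$ in the empirical norm, and passing to $L^2(\mu)$ costs a multiplicative constant on the approximation error.
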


In order to find an approximation \(u(t,x)\) on a time grid \(0<t_1<\ldots<t_L<T\)
the entire procedure (\ref{samplingprocedure}) has to be repeated for every time step, i.e., we generate
samples of $X$ starting in $x^{(m)}$ at the respective initial time $t_l$. In Section~\ref{seq:temp-spat} we develop an alternative regression type algorithm which
allows for approximating the solution \(u(t,x)\) on \(0<t_1<\ldots<t_L<T\) using only one set of trajectories
of the process \(X\) with $X_0 = x^{(m)}$.

Under somewhat more restrictive assumptions, our
approximation \(\overline{u}\) satisfies\begin{eqnarray*}
\label{eq:upper_bound_time}
E\left[\int_{\R^n}|\overline{u}(t_l,x)-u(t_l,x)| \, \mathrm{P}%
_{X_{t_l}}(dx)\right]&\leq&
C\left[\frac{K\,\log(M)}{M}\right.
\\
&& \left.+\inf_{w \in \spn\{\psi_1, \ldots, \psi_K\}} \int_{\R^n}
    \abs{u(t_l,x) - w(x)}^2 \mathrm{P}%
_{X_{t_l}}(dx)\right],
\end{eqnarray*}
for \(l=1,\ldots,L.\)

In order to  bound the corresponding approximation errors 
\[\inf_{f\in\text{span}\{\psi_{1},...,\psi_{K}\}}\Vert
u(t_l,\cdot)-f(\cdot)\Vert_{L^{2}(\varrho)}^{2},\quad l=1,\ldots,L,
\] 
where the measure $\varrho$ is either $\mu$ or $\mathrm{P}_{X_{t_l}}$, one needs to specify the basis functions.

In Section~\ref{seq:app_err} we show that in the case of piecewise polynomial
basis functions, the approximation error can be bounded (up to a constant) by
\(K^{-\varkappa}\) with \(\varkappa=\frac{2\nu(q+1)}{n(\nu+2(q+1))},\)
provided that each function \(u(t_l,\cdot)\) is \(q+1\) times differentiable,
irrespective of the chosen regression type. Now the smoothness of
\(u(t_l,\cdot)\) follows from the smoothness of the coefficients of the
underlying PDE and \(g\) via Theorem~\ref{thr:main-regularity}.

A last puzzle piece is still missing if we want to provide a fully
implementable approximation scheme, since we still need to solve the rough
differential equation describing $X$. Here we employ a (simplified) Euler-type
scheme including approximations of the needed signature terms by polynomials
of the path itself, see Bayer, Friz, Riedel and Schoenmakers~\cite{BFRS16} for
details. The scheme is recalled in Section~\ref{sec:Euler}. In the current
context, the rate of convergence of the scheme is (almost) $2\alpha - 1/2$,
see Theorem~\ref{thr:Euler_rate} for details.
Finally, we give several numerical examples in
Section~\ref{sec:numerical-examples}. 

\begin{remark}
  \label{rem:stochastic_extension}
  The scope of this paper is solving deterministic rough PDEs, i.e., PDEs
  driven by a deterministic but rough path $\mathbf{W}$. What happens when
  $\mathbf{W}$ is instead assumed to be random -- implying randomness of u? If
  we apply the same algorithm as above, but with the samples of $X$ based on
  i.i.d.~samples of $\mathbf{W}$, then the regression based approximation is
  an estimate for $E[u(t,x)]$ (see~\cite{Ank2017} for the case of regular
  random noise). Of course, we can also use a regression approach for solving
  the full random solution $u(t,x;\omega)$. In this case, we need to choose
  basis functions in both $x$ and $\omega$. This means, proper basis functions
  need to be found in $\omega$ -- or rather, in $\mathbf{W}$. The
  \emph{signature} of $\mathbf{W}$ provides a useful parametrization for
  purposes of regression, see, for instance Lyons~\cite{lyons2014rough}. We
  will revisit this question in future works.
\end{remark}


\section{Stochastic representation}
\label{sec:stoch-repr}


We consider rough partial differential equations in the setting studied
\cite{DFS17,DOR15,FH14}. Given a $d$-dimensional $\alpha$-H\"{o}lder
continuous geometric rough path $\mathbf{W} = (W, \mathbb{W})$, $\f{1}{3} <
\alpha \le \half$, we consider the backward problem on $\R^n$
\begin{subequations}
  \label{eq:Cauchy-RPDE}
  \begin{gather}
    \label{eq:Cauchy-RPDE-a}
    -du = L(u) \, dt + \sum_{k=1}^d \Gamma_k(u) d\mathbf{W}^k,\\
    \label{eq:Cauchy-RPDE-b}
    u(T, \cdot) = g,
  \end{gather}
\end{subequations}
where the differential operators $L$ and $\Gamma = (\Gamma_1, \ldots,
\Gamma_d)$ are defined by
\begin{gather}
  \label{eq:def-L}
  Lf(x) = \half \trace\left( \sigma(x) \sigma(x)^\top D^2 f(x) \right) +
  \ip{b(x)}{Df(x)} + c(x) f(x),\\
  \label{eq:def-Gamma}
  \Gamma_kf(x) = \ip{\beta_k(x)}{Df(x)} + \gamma_k(x) f(x),
\end{gather}
for a suitable test function $f:\R^n \to \R$ and given functions $\sigma:\R^n
\to \R^{n \times m}$, $b:\R^n \to \R^n$, $c:\R^n \to \R$, $\beta_k : \R^n \to
\R^n$, $\gamma_k:\R^n \to \R$, $k=1, \ldots, d$. All functions are ``smooth
enough''.

A function $u = u(t,x; \mathbf{W})$ is called a ``regular''
solution\footnote{\cite{DFS17} also provide a weak notion of solution. In what
follows, the construction for both notions of solutions is the same, but weak
solutions can be established under weaker regularity conditions on the coefficients.}
to~(\ref{eq:Cauchy-RPDE}) if $u \in \mathcal{C}^{0,2}$ and
\begin{equation*}
  u(t,x) = g(x) + \int_t^T Lu(r,x) dr + \sum_{k=1}^d \int_t^T \Gamma_k u(r,x) d\mathbf{W}^k_r,
\end{equation*}
where the integral is understood in the rough path sense requiring
$\Gamma_ku, \Gamma_j\Gamma_ku$ to be controlled by $\mathbf{W}$ as functions in $t$.

Solutions to the above rough PDE in the above sense are constructed by
Feynman-Kac representations. We introduce an $m$-dimensional Brownian motion
$B$, which will essentially be used to construct a diffusion process with
generator $L$. Specifically, let
\begin{equation}
  \label{eq:SDE}
  dX_t = \sigma(X_t) dB_t + b(X_t) dt + \beta(X_t) d\mathbf{W}_t,
\end{equation}
where the $dB$-integral is understood in the It\^o sense. More precisely,
\eqref{eq:SDE} is understood as a random (via $B$) rough ordinary differential
equation with respect to a $(m+d)$-dimensional rough path $\mathbf{Z} =
\left(Z, \mathbb{Z}\right)$ defined by
\begin{equation}\label{eq:joined_rp}
  Z_t \coloneqq
  \begin{pmatrix}
    B_t\\
    W_t
  \end{pmatrix},\quad
  \mathbb{Z}_{s,t} \coloneqq
  \begin{pmatrix}
    \mathbb{B}^{\text{It\^o}}_{s,t} & \int_s^t W_{s,r} \otimes dB_r\\
    \int_s^t B_{s,r} \otimes dW_r & \mathbb{W}_{s,t}
  \end{pmatrix}.
\end{equation}

The following existence and uniqueness theorem is (part of) \cite[Theorem 2.8]{DFS17}.
\begin{theorem}
  \label{thr:existence+uniqueness+cauchy}
  Assume that the coefficients satisfy $\sigma_i, \beta_j, \gamma_k \in
  \mathcal{C}^6_b(\R^n)$, $c, g \in \mathcal{C}^4_b(\R^n)$. Define
  \begin{equation}
    \label{eq:stoch_rep}
    u(t,x; \mathbf{W}) \coloneqq E^{t,x}\left[ g(X_T) \exp\left( \int_t^T
        c(X_r) dr + \int_t^T \gamma(X_r) d\mathbf{W}_r \right) \right], \quad
    (t,x) \in [0,T] \times \R^n.
  \end{equation}
  Then $u \in \mathcal{C}^{0,4}_b\left([0,T] \times \R^n\right)$ solves the
  problem~\eqref{eq:Cauchy-RPDE} in the regular sense. The solution is unique
  among all $\mathcal{C}^{0,4}_b\left([0,T] \times \R^n\right)$ ``which are controlled
  by $\mathbf{W}$''. Moreover, if $g$ additionally has exponential decay, then
  the same is true for $u$.
\end{theorem}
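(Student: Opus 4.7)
The plan is to split the proof into four logical steps: (i) making sense of the driving rough path $\mathbf{Z}$ and the RDE \eqref{eq:SDE}; (ii) establishing the spatio-temporal regularity claimed for $u$; (iii) checking that $u$ defined by \eqref{eq:stoch_rep} solves \eqref{eq:Cauchy-RPDE} in the regular sense; and (iv) proving uniqueness in the $\mathbf{W}$-controlled class.

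For step (i), almost surely the joint path $\mathbf{Z}$ defined in \eqref{eq:joined_rp} is a genuine $\alpha$-H\"older rough path: the diagonal $\mathbb{B}^{\text{It\^o}}$ is the classical It\^o enhancement of $B$, the $\mathbf{W}$-block is given, and the off-diagonal entries $\int W \otimes dB$ and $\int B \otimes dW$ are well defined (the former as an It\^o integral, the latter as a Young/rough integral since $\alpha > 1/3$) and they satisfy Chen's relation by construction. Under $\sigma_i, \beta_j \in \mathcal{C}^6_b$, $c,g \in \mathcal{C}^4_b$ and $b$ sufficiently smooth and bounded, \eqref{eq:SDE} admits a pathwise unique RDE solution, and by standard rough-flow theory (Friz--Hairer, Chapter 11) the flow $x \mapsto X^{t,x}_s$ is five times continuously differentiable with all its derivatives having finite $L^p$-moments for every $p \ge 1$.

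For step (ii), I would differentiate \eqref{eq:stoch_rep} four times under the expectation. The derivatives of $X^{t,x}_s$ in $x$ are controlled by step (i), while the derivatives of the exponential factor produce polynomial expressions in derivatives of $c(X_r)$ and in controlled rough integrals of derivatives of $\gamma(X_r)$, all of which remain integrable under the boundedness of $c$ and $\gamma$. Dominated convergence then yields $u \in \mathcal{C}^{0,4}_b$; the exponential decay of $u$ whenever $g$ decays exponentially follows from $|u(t,x)| \le C\, E^{t,x}[|g(X_T)|]$ together with subgaussian tail bounds on $X_T^{t,x}-x$, which in turn come from the boundedness of $\sigma,b,\beta$ and standard concentration estimates for rough flows. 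For step (iii) the key identity is the rough It\^o formula of \cite{FH14} applied to
\begin{equation*}
  M_s \coloneqq u(s, X_s^{t,x})\,\exp\!\Bigl(\int_t^s c(X_r)\, dr + \int_t^s \gamma(X_r)\, d\mathbf{W}_r\Bigr),\quad s\in[t,T],
\end{equation*}
with $u$ as in \eqref{eq:stoch_rep}. Decomposing by driver, the $ds$-part equals $(\partial_s u + Lu + cu)\, ds$, the $dB$-part is a true It\^o martingale with integrand $\sigma^\top Du$, and the $d\mathbf{W}^k$-part equals $(\Gamma_k u + \gamma_k u)\, d\mathbf{W}^k$ once the cross-bracket between the exponential factor and the $\beta\, d\mathbf{W}$-term in $X$ is accounted for. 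By the tower property $M_s = E^{t,x}[M_T \mid \mathcal{F}_s]$ is a martingale, so its finite-variation and rough-integral parts must vanish identically, which is exactly \eqref{eq:Cauchy-RPDE-a} with terminal data $g$.

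The main obstacle -- and the route to step (iv) -- is the flow-transformation trick. Let $\phi_{t,T}(\cdot;\mathbf{W})$ denote the (inverse) rough flow of $dy = \beta(y)\, d\mathbf{W}$, augmented by a scalar gauge absorbing the $\gamma\, d\mathbf{W}$ contribution, and set $\tilde u(t,x) \coloneqq u(t, \phi_{t,T}(x))\cdot \exp(\cdots)$. This conjugation removes the rough integral from the equation and turns \eqref{eq:Cauchy-RPDE} into a classical backward parabolic PDE for $\tilde u$ whose coefficients are continuous in $(t,x)$ and depend on $\mathbf{W}$ pathwise but no longer roughly; classical uniqueness in $\mathcal{C}^{0,2}_b$ then transfers back to uniqueness for $u$ in the $\mathbf{W}$-controlled class. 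The subtle technical point throughout is justifying the commutation of conditional expectation with the rough integrals against $\mathbf{W}$; I would handle this by Wong--Zakai approximation, taking piecewise linear $\mathbf{W}^n \to \mathbf{W}$ in rough path topology, applying classical Feynman--Kac for each $\mathbf{W}^n$, and passing to the limit using continuity of both sides of \eqref{eq:stoch_rep} in the driver -- which is precisely the main benefit of working within rough path theory.
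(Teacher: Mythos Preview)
The paper does not supply its own proof of this theorem; it is quoted from Diehl--Friz--Stannat~\cite{DFS17} (see the sentence immediately preceding the statement). Your outline --- construct the joint rough path, verify regularity by differentiating under the expectation, derive the PDE from a rough It\^o/martingale computation, and obtain uniqueness via a flow transformation --- is precisely the strategy of~\cite{DFS17}, so in that sense you are on the same route as the cited source.

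There is, however, a genuine gap in your step~(i)/(ii), and the paper itself flags it in Remark~\ref{rem:regularity}. You write that ``by standard rough-flow theory (Friz--Hairer, Chapter~11) the flow $x \mapsto X^{t,x}_s$ is five times continuously differentiable with all its derivatives having finite $L^p$-moments for every $p \ge 1$.'' The differentiability is indeed standard; the \emph{integrability} of the higher derivatives is not. Deterministic rough-path estimates produce bounds of order $\exp(\|\mathbf{Z}\|_{\pvar}^p)$, which fail to be integrable for Gaussian drivers. Cass--Litterer--Lyons~\cite{CLL13} repaired this for the \emph{first} derivative by replacing $\|\mathbf{Z}\|_{\pvar}^p$ with the local accumulation count $N(\mathbf{Z})$, but the extension to second and higher derivatives --- which is exactly what you need to pass from ``$X^{t,x}_T$ is four times differentiable in $x$'' to ``$u(t,\cdot) \in \mathcal{C}^4_b$'' --- was not available. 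The paper's Section~\ref{sec:regularity-solution} (Theorem~\ref{thm:bounds_flow_deriv} and Corollary~\ref{cor:main}) is devoted to filling precisely this gap. So your dominated-convergence argument in step~(ii) is the right idea, but the dominating function is not obtainable from the references you invoke.

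Two smaller points. In step~(iii) the $d\mathbf{W}^k$-coefficient should be $\Gamma_k u$, not $\Gamma_k u + \gamma_k u$: by~\eqref{eq:def-Gamma} the operator $\Gamma_k$ already contains the zeroth-order term $\gamma_k u$, so the contribution from differentiating the exponential factor is absorbed there rather than added on top. And the sentence ``its finite-variation and rough-integral parts must vanish identically'' needs care: the $d\mathbf{W}$-integral has a random integrand and is not of finite variation, so one cannot simply peel off the $dB$-martingale and set each remaining piece to zero separately. The device you describe in your final paragraph --- Wong--Zakai approximation by piecewise-linear $\mathbf{W}^n$, classical Feynman--Kac at the approximate level, then passage to the limit in rough-path topology --- is exactly how this is made rigorous in~\cite{DFS17}, and it is what actually carries both step~(iii) and step~(iv).
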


\begin{remark}
  \label{rem:regularity}
  The authors of this paper are in doubt to what extent
  Theorem~\ref{thr:existence+uniqueness+cauchy} was indeed proved
  in~\cite{DFS17}, in particular with respect to regularity. Differentiability
  of $u$ in space is obtained by the corresponding differentiability of the
  solution map $x = X_0 \mapsto X_t$ of the mixed stochastic/rough
  differential equation~\eqref{eq:SDE}. Cass, Litterer and Lyons~\cite{CLL13}
  (see also \cite{FR13}) have proved the existence \emph{and integrability} of
  the first variation of RDEs like~\eqref{eq:SDE}, i.e., the first derivative,
  which extends to the statement that $u \in C^{0,1}$ in the above
  theorem. However, to the best of our knowledge, this result has not been
  extended to higher order derivatives in the literature before. We fill this
  gap in Section~\ref{sec:regularity-solution}, see
  Theorem~\ref{thm:bounds_flow_deriv} for the result on regularity of the flow
  of an RDE and Corollary~\ref{cor:main} for the extended version of
  Theorem~\ref{thr:existence+uniqueness+cauchy}
  above.  
\end{remark}

\begin{remark}
  \label{rem:gen-cauchy}
  It is possible to consider the problem (\ref{eq:Cauchy-RPDE}) for slightly
  more general operators $L$ and $\Gamma,$ by adding to $L$ and $\Gamma$ an
  autonomous term, say $h(x)$ and $\eta(x)\in\R^d,$ respectively.  This will
  result in an extended stochastic representation
\begin{equation}
  \label{eq:stoch_rep_extended0}
  E\left[  g(X_{T}^{t,x})Y_{T}^{t,x,1}+Z_{T}^{t,x,1,0}\right],  \quad t\leq T, \quad x\in \mathbb R^n,
\end{equation}
for the solution of~\eqref{eq:Cauchy-RPDE}, where 
\begin{align}
Y_{s}^{t,x,1}  & :=\exp\left(  \int_{t}^{s}c(X_{r}^{t,x})dr+\int_{t}^{s}%
\gamma(X_{r}^{t,x})dW_{r}\right)  \label{Ystdef}, \quad \text{and}\\
Z_{T}^{t,x,1,0}  & :=\int_{t}^{T}Y_{r}^{t,x,1}\left(  h(X_{r})dt+\eta^{\top
}\left(  X_{r}\right)  dW_{r}\right). \nonumber
\end{align}
\smallskip
\end{remark}

\begin{remark}
\label{sec:variance-reduction}
By defining a mean-zero process
$\widetilde{Z}_{\cdot}^{t,x,1,0}$ as the solution to 
\begin{align*}
d\widetilde{Z}_{s}  =Y_{s}F^{\top}(s,X_{s})dB_{s},\text{ \ \ }\widetilde{Z}_{t}=0
\end{align*}
for an arbitrary column vector function $F(s,y)\in\mathbb{R}^{m},$ $y\in\mathbb{R}^{n},$ and $Y_{s}$ given in (\ref{Ystdef}), we obtain another
modification of the standard stochastic
representation,~\eqref{eq:stoch_rep}, which provides a
stochastic representation with a free parameter that has smaller (point-wise) variance if this parameter is chosen accordingly.
Indeed, from Theorem \ref{thr:existence+uniqueness+cauchy} it is a
trivial observation that 
\begin{equation}
  \label{eq:stoch_rep_extended}
  E\left[  g(X_{T}^{t,x})Y_{T}^{t,x,1}+\widetilde{Z}_{T}^{t,x,1,0}\right],  \quad t\leq T, \quad x\in \mathbb R^n,
\end{equation}
is a stochastic representation to the solution of~\eqref{eq:Cauchy-RPDE}. 
In fact, via the chain rule for geometric rough paths it is possible to show that the variance of the random variable
$$
g(X_{T}^{t,x})Y_{T}^{t,x,1}+\widetilde{Z}_{T}^{t,x,1,0}
$$
vanishes if $F$ satisfies $\sigma^{\top}Du+F$ $=$ $0.$ (Cf.  Milstein and Tretyakov~\cite{mils/tret04} for this result in the standard SDE setting.) 
Of course such an ``optimal'' $F$ involves the solution of the problem itself, and as such
is not directly available.  A comprehensive study of constructing ``good'' variance reducing parameters $F$ in the present context is deferred to subsequent work. 
\smallskip

\end{remark}


\begin{remark}
  \label{rem:Dirichlet-problem}
  From a regression point of view, it might be simpler to consider the
  Dirichlet problem on a domain $D \subset \R^n$, i.e.,
  \begin{gather*}
    -du = L(u) \, dt + \sum_{k=1}^d \Gamma_k(u) d\mathbf{W}^k,\\
    u(T, x) = g(x), \ x \in D,\quad
    u(\cdot, x) = f(x), \ x \in \pa D.
  \end{gather*}
  There are a few challenges here:
  \begin{itemize}
  \item A new existence and uniqueness theorem  following the lines
    of~\cite{DFS17} is required. In particular, the
    Feynman-Kac representation in terms of stopped processes has to be derived.
  \item Numerical schemes for stopped rough differential equations have, to
    the best of our knowledge, not yet been considered.
  \end{itemize}
\end{remark}

\section{Regularity of the solution}
\label{sec:regularity-solution}

In order to understand the convergence of the regression based approximation
to $u$ as a function of the input data (including the rough path
$\mathbf{W}$), we need to control the derivative $\partial_x u(t,x)$ and
higher order derivatives explicitly in terms of the data.

We start with an $e$-dimensional weakly geometric rough path $\mathbf{Z}$ with
finite $p$-variation norm ($2 \leq p < 3$).\footnote{Note that $\mathbf{Z}$ as
  defined in~\eqref{eq:joined_rp} is \emph{not} weakly geometric. As outlined
  below, we have to transform the equation~\eqref{eq:SDE} for $X$ into
  Stratonovich form first.}  Recall that standard stability estimates for
solutions of rough differential equations driven by $\mathbf{Z}$ lead to
estimates of the form
\begin{equation*}
  \exp\left( \norm{\mathbf{Z}}_{\pvar} \, \vee \, \norm{\mathbf{Z}}_{\pvar}^p \right),
\end{equation*}
see, for instance, \cite[Theorem 10.38]{FV10}. If we replace $\mathbf{Z}$ by a
Brownian rough path, we see that terms of the above form are not integrable,
due to the $p$th power. Hence, these estimates, which are sufficient (and
sharp) in the deterministic setting, are impractical in the stochastic
setting. Cass, Litterer and Lyons~\cite{CLL13} were able to derive alternative
estimates for the first derivative of the solution flow induced by a rough differential equation, which retain integrability in (most) Gaussian contexts, cf. also \cite{FR13}. In the following section, we extend their results to higher order derivatives.

\subsection{Higher order derivatives of RDE flows}


Consider the rough differential equation
\begin{align}\label{eqn:ex_rough_DE}
	X_t^{x} = x + \int_0^t V(X^{x}_s)\, d\mathbf{Z}_s \in \R^n,
\end{align}
where $V \in \mathcal{C}(\R^n, L(\R^e, \R^n))$. Formally, the derivative
$X^{(1)} \coloneqq D_{x} X^{x}$ should solve the equation
\begin{align*}
	X^{(1)}_t = \operatorname{Id} + \int_0^t D V(X^{x}_s)(d\mathbf{Z}_s)  X^{(1)}_s\,  \in \R^{n \times n}
\end{align*}
with
\begin{align*}
 D V \colon \R^n \to L(\R^n, L(\R^e, \R^n)) \cong L(\R^e, L(\R^n, \R^n)).
\end{align*}
The higher order derivatives of the vector field $V$ are functions
\begin{align*}
	D^k V \colon \R^n \to L((\R^n)^{\otimes k}, L(\R^e, \R^n)) \cong L( \R^e, L((\R^n)^{\otimes k}, \R^n)),
\end{align*}
and the $k$-th derivative of the flow $X^{(k)} \coloneqq D^k_{x} X^{x}$ should be a function
\begin{align*}
	D^k X^{\cdot}_t \colon \R^n \to L((\R^n)^{\otimes k}, \R^n).
\end{align*} 
Taking formally the second derivative in \eqref{eqn:ex_rough_DE}, we obtain the equation
\begin{align*}
	X^{(2)}_t = \int_0^t D^2 V(X^{x}_s)( d\mathbf{Z}_s)(X^{(1)}_s \otimes X^{(1)}_s) + \int_0^t D V(X^{x}_s)(d\mathbf{Z}_s) X^{(2)}_s\, ,
\end{align*}
and for the third derivative,
\begin{align*}
 X^{(3)}_t &= \int_0^t D^3 V(X^{x}_s)( d\mathbf{Z}_s) (X^{(1)}_s \otimes X^{(1)}_s \otimes X^{(1)}_s) + 2 \int_0^t D^2 V(X^{x}_s)(X^{(1)}_s \otimes X^{(2)}_s)  \\
 &\quad +  \int_0^t D^2 V(X^{x}_s) (d\mathbf{Z}_s) (X^{(2)}_s \otimes X^{(1)}_s) + \int_0^t D V(X^{x}_s)( d\mathbf{Z}_s) X^{(3)}_s.
\end{align*}
  These formal calculations can be performed for any order $k$. The forthcoming theorem justifies these calculations. Moreover, it provides estimates for the solution which are especially useful for tail estimates when the equation is driven by a Gaussian process. For given $0 \le s < t\le T$, these estimates are based on the following
sequence of times $\tau_i$, iteratively defined by $\tau_0 = s$ and
\begin{equation*}
  \tau_{i+1} \coloneqq \inf \Set{\tau_i < u < t | \norm{\mathbf{Z}}_{\pvar;[\tau_i,u]}^p
    \ge \alpha} \wedge t,
\end{equation*}
where $\alpha$ is a positive parameter.  
Define
\begin{equation}
  \label{eq:N}
  N_{\alpha}( \mathbf{Z} ; [s,t]) \coloneqq \max\Set{n | \tau_n < t}.
\end{equation}
For $\alpha = 1$, we will omit the parameter and simply write $N$ instead of $N_1$. The important insight of~\cite{CLL13} was that $\norm{\mathbf{Z}}_{\pvar}^p$
can often be replaced by $N$ in rough path estimates, and that $N$ does have
Gaussian tails when $\mathbf{Z}$ is replaced by Gaussian processes respecting
certain regularity assumptions. But for now we remain in a purely
deterministic setting.

Next, we state the main theorem of this section. Since the proof is a bit lengthy, we decided to give it in the appendix, cf.~page \pageref{proof:flow_deriv_est}.
  
  \begin{theorem}\label{thm:bounds_flow_deriv}
   Fix $s \in [0,T]$ and let $\mathbf{Z}$ be a weakly geometric $p$-rough path for $p \in [2,3)$. Let $V \in \mathcal{C}^{2 + k}_b(\R^n, L(\R^e, \R^n))$ for some $k \geq 1$. Consider the unique solution $X^{s,x}$ to
   \begin{align}\label{eqn:RDE_main_theorem}
    X_t^{s,x} = x + \int_s^t V(X^{s,x}_u)\, d\mathbf{Z}_u \in \R^n, \quad t \in [s,T].
   \end{align}
  Then for every fixed $t \in [s,T]$, the map $x \mapsto X_{t}^{s,x}$ is $k$-times differentiable. Moreover, the $k$th derivative $X^{(k)}_t = D^k_x X^{s,x}_t$ solves a rough differential equation which is obtained by formally differentiating \eqref{eqn:RDE_main_theorem} $k$-times with respect to $x$. Setting $\omega(u,v) \coloneqq \|\mathbf{Z}\|_{p-\text{var};[u,v]}^p$, we have the bounds
  \begin{align}
    \| X^{(k)} \|_{p-\omega} \coloneqq  \| X^{(k)} \|_{p-\omega;[s,T]} &\leq C \|V\|_{\mathcal{C}^{2 + k}_b} \exp \left( C \|V\|_{\mathcal{C}^{2 + k}_b}^p (N( \mathbf{Z} ; [s,T]) + 1) \right) \quad \text{and} \label{eqn:bounds_derivatives} \\
    \| X^{(k)} \|_{\infty} \coloneqq \| X^{(k)} \|_{\infty;[s,T]} &\leq |X^{(k)}_s| + C \|\mathbf{Z}\|_{p-\text{var};[s,T]} \|V\|_{\mathcal{C}^{2 + k}_b} \exp \left( C \|V\|_{\mathcal{C}^{2 + k}_b}^p (N( \mathbf{Z} ; [s,T]) + 1) \right) \label{eqn:bounds_derivatives_sup}
  \end{align}
  where $C$ depends on $p$ and $k$.
  \end{theorem}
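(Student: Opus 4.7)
The plan is to induct on $k \geq 1$, proving the existence of $X^{(k)}$ and the stated bounds simultaneously. The base case $k=1$ is essentially the content of~\cite{CLL13}: the first variation solves the linear RDE $dX^{(1)} = DV(X)(d\mathbf{Z})X^{(1)}$ with $X^{(1)}_s = \Id$, and the key replacement of $\|\mathbf{Z}\|_{p\text{-var}}^p$ by the integer $N$ in the exponent comes from chaining local bounds over the greedy partition $(\tau_i)$. For the inductive step, assuming the theorem up to order $k-1$, I would formally differentiate~\eqref{eqn:RDE_main_theorem} $k$ times in $x$ via a Fa\`a di Bruno type expansion to obtain a linear, inhomogeneous RDE of the form
\begin{align*}
X^{(k)}_t = \int_s^t DV(X^{s,x}_u)(d\mathbf{Z}_u)\, X^{(k)}_u + R^{(k)}_t,
\end{align*}
where the forcing $R^{(k)}$ is a polynomial combination of $D^jV(X^{s,x})$ ($2 \leq j \leq k$) paired against tensor products $X^{(i_1)} \otimes \cdots \otimes X^{(i_j)}$ with $i_1 + \cdots + i_j = k$ and each $i_\ell < k$.

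The first move is to legitimise this formal calculation. I would approximate $\mathbf{Z}$ by smooth rough paths $\mathbf{Z}^\varepsilon$ (geodesic or piecewise-linear approximants), invoke classical ODE theory to get $\mathcal{C}^k$-regularity of the flow in $x$ together with the explicit expression for $X^{(k),\varepsilon}$, and then view the augmented system $(X, X^{(1)}, \ldots, X^{(k)})$ as a single RDE driven by $\mathbf{Z}^\varepsilon$, with vector fields assembled from $V, DV, \ldots, D^kV$. Passing $\varepsilon \to 0$ via continuity of the It\^o--Lyons map, combined with the uniform bounds from the next step, identifies the limit with the unique solution of the target linear RDE and establishes the $k$-fold differentiability of $x \mapsto X^{s,x}_t$. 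Verifying that the augmented vector fields carry enough regularity for the universal limit theorem—and thus that the passage to the limit is legitimate—is the main technical obstacle.

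The heart of the quantitative argument is a local-to-global estimate driven by the greedy partition. Because $\omega(\tau_i, \tau_{i+1}) \leq 1$ by definition, standard local bounds for linear RDEs (see, e.g., \cite[Thm.~10.53]{FV10}) applied on each $[\tau_i, \tau_{i+1}]$ give, after using the inductive hypothesis to control $R^{(k)}$,
\begin{align*}
|X^{(k)}_{\tau_{i+1}}| \leq \bigl(1 + C\|V\|_{\mathcal{C}^{2+k}_b}^p\bigr)|X^{(k)}_{\tau_i}| + A_i, \qquad \|X^{(k)}\|_{p\text{-}\omega;[\tau_i,\tau_{i+1}]} \leq C\|V\|_{\mathcal{C}^{2+k}_b}\bigl(|X^{(k)}_{\tau_i}| + A_i\bigr),
\end{align*}
where the inhomogeneous term $A_i$ is already controlled by $\exp\bigl(C\|V\|_{\mathcal{C}^{2+k}_b}^p(N+1)\bigr)$ through the induction, since $R^{(k)}$ is polynomial in lower-order quantities that enjoy this bound.

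Iterating the one-step inequality over the $N([s,T]) + 1$ greedy intervals produces a geometric factor $(1 + C\|V\|_{\mathcal{C}^{2+k}_b}^p)^{N+1} \leq \exp\bigl(C\|V\|_{\mathcal{C}^{2+k}_b}^p(N+1)\bigr)$, directly yielding~\eqref{eqn:bounds_derivatives}. The sup-norm bound~\eqref{eqn:bounds_derivatives_sup} then follows from the standard inequality $\|Y\|_{\infty;[s,T]} \leq |Y_s| + \|\mathbf{Z}\|_{p\text{-var};[s,T]} \|Y\|_{p\text{-}\omega;[s,T]}$ valid for $\omega$-controlled paths. The delicate book-keeping is to arrange that the prefactor outside the exponential remains polynomial in $\|V\|_{\mathcal{C}^{2+k}_b}$ rather than itself exponential; this is possible because the remainder estimates contribute additively to the $A_i$, so their exponential growth can be absorbed into a single $\exp$-factor at the very end.
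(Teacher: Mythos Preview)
Your overall architecture matches the paper's: induction on $k$, identification of $X^{(k)}$ as the solution of a linear inhomogeneous RDE whose forcing is built from tensor products of lower-order derivatives, and a local-to-global argument over the greedy partition producing the $\exp(CN)$ factor. The paper, however, executes the quantitative part via an explicit variation-of-constants formula $X^{(k)}_t = \Phi_t \int_0^t \Phi_u^{-1}\, d\zeta_u$ (with $\Phi$ the Jacobian and $\Psi = \Phi^{-1}$ solving the adjoint linear RDE), rather than your discrete one-step iteration; the two are equivalent in spirit, but the Duhamel representation interfaces more cleanly with the rough-integral estimates. The paper also normalises by setting $\tilde V = V/(\kappa\|V\|_{\mathcal{C}^{2+k}_b})$ and $\tilde{\mathbf{Z}} = \kappa\|V\|\cdot\mathbf{Z}$, proving a unit-scale bound first and only at the end converting $N(\tilde{\mathbf{Z}})$ back to $N(\mathbf{Z})$ via the lemmas of~\cite{FR13}; this is what keeps the prefactor linear in $\|V\|$, a point you flag as delicate but do not resolve.

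There is one genuine gap. The forcing $R^{(k)}$ is a rough integral of the form $\int D^jV(X)(d\mathbf{Z})\bigl(X^{(i_1)}\otimes\cdots\otimes X^{(i_j)}\bigr)$, and to even define and estimate such an integral you need the integrand to be a \emph{controlled path}: you must control not only $\|X^{(i)}\|_{p\text{-}\omega}$ and $\|X^{(i)}\|_\infty$ but also the Gubinelli derivative $(X^{(i)})'$ and the remainder $R^{X^{(i)}}$. The induction hypothesis as you have stated it---just the two bounds in the theorem---is therefore too weak to close. The paper fixes this by proving a strengthened intermediate estimate tracking all five quantities $\|X^{(k)}\|_\infty$, $\|X^{(k)}\|_{p\text{-}\hat\omega}$, $\|(X^{(k)})'\|_\infty$, $\|(X^{(k)})'\|_{p\text{-}\hat\omega}$, $\|R^{X^{(k)}}\|_{p/2\text{-}\hat\omega}$ simultaneously through the induction, and only at the very end extracts the two bounds announced in the theorem. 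Your alternative route via the augmented system $(X,X^{(1)},\ldots,X^{(k)})$ would sidestep this, but, as you note yourself, the augmented vector field has polynomial growth and so lies outside the scope of the standard It\^o--Lyons continuity theorem; you would still need a priori bounds of exactly the kind above to justify the passage to the limit.
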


\subsection{Bounding higher variations of~\eqref{eq:SDE}}
\label{sec:bound-first-vari}

In order to apply Theorem \ref{thm:bounds_flow_deriv} to the rough stochastic differential equation~\eqref{eq:SDE}, we first rewrite it in Stratonovich form which leads to the following hybrid Stratonovich-rough differential equation
\begin{align}  \label{eq:rough_Stra-SDE}
  \begin{split}
  dX_t &= \left[b(X_t)-a(X_t) \right] dt +\sigma(X_t)\circ dB_r  + \beta(X_t) d\mathbf{W}_t \\
  &= \hat{b}(X_t) dt +\sigma(X_t)\circ dB_r  + \beta(X_t) d\mathbf{W}_t
  \end{split}
\end{align}
where $a(\cdot)=\frac{1}{2} \sum_{i=1}^m D\sigma_i(\cdot)\cdot\sigma_i(\cdot)$ is the It\^o-Stratonovich correction, $\sigma_i$ is the $i$th column
of $\sigma$ and $\hat{b}(\cdot):=b(\cdot)-a(\cdot)$. Equation \eqref{eq:rough_Stra-SDE} is now indeed of the form \eqref{eqn:ex_rough_DE} if we set $V = (\hat{b}, \sigma, \beta) \colon \R^n \to L(\R^{1 + m + d}, \R^n)$ and consider the joint (geometric) rough path lift $\mathbf{Z}$ of $t \mapsto (t,B_t, W_t) =: (\tilde{B}_t,W_t)$ obtained from the Stratonovich rough path lift of the Brownian motion $\tilde{B}$ and $\mathbf{W}$ (recall that $d$ is the dimension of $\mathbf{W}$, $m$ is the dimension of the Brownian motion $B$). The lift $\mathbf{Z}$ is given as in (\ref{eq:joined_rp}), but $\mathbb{B}^{\text{It\^o}}$ is substituted by the Stratonovich integral $\mathbb{B}^{\text{Str}}$, cf.~\cite{DOR15} for further details.

Applying the bounds of Theorem \ref{thm:bounds_flow_deriv} 
to the solution of equation~\eqref{eq:rough_Stra-SDE} with initial condition $X_t = x$, we see that the expected value of the
norm of the $k$th variation $D^k X$ is bounded in terms
of the moment generating function of $N(\mathbf{Z} ; [t,T])$. In~\cite{DOR15}, such bounds are provided considering the moment generating function of
$\norm{\mathbf{Z}}_{\pvar}^2$. The following is a version
of~\cite[Corollary 23]{DOR15}, which differs in two respect: first, we
consider the moment generating function of $N(\mathbf{Z} ; [t,T])$
instead of $N(\mathbf{Z} ; [t,T])^2$, and secondly we try to make the
constants explicit (instead of only providing the existence of the
exponential moment).

\begin{lemma}
  \label{lem:mgf-N}
  Given $\f{1}{\alpha} < p < 3$ and $\delta > 0$, we let
  \begin{equation*}
    \kappa_p(\delta, \mathbf{W}) \coloneqq E\left[ \exp\left( \delta
        \norm{\mathbf{Z}}_{\pvar;[t,T]}^2 \right) \right], 
  \end{equation*}
  assuming that $\delta$ is small enough such that $\kappa < \infty$. Then for
  all $\lambda > 0$ we have the bound
  \begin{equation}
    \label{eq:mgf-N-2}
    E\left[\exp\left( \lambda N(\mathbf{Z} ; [t,T]) \right) \right] \le
    \exp\left( 2^{1/p} \lambda \left[ \f{\log(2 \kappa_p(\delta,
          \mathbf{W})}{\delta} \right]^{p/2} \right) + \sqrt{2\pi} \lambda
    \sigma e^{2\lambda^2 \sigma^2},
  \end{equation}
  where $\sigma \coloneqq \sqrt{T-t}$.
\end{lemma}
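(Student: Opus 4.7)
The plan is to combine a deterministic pathwise bound on $N$ with Markov's inequality, then refine the tail using Gaussian concentration of the Brownian component of $\mathbf{Z}$. The starting point is the superadditivity of the control $\omega(u,v) := \|\mathbf{Z}\|_{\pvar;[u,v]}^p$ combined with the defining property $\omega(\tau_i, \tau_{i+1}) = 1$ (recall $\alpha = 1$) for $i < N$, giving the pathwise estimate $N \leq \|\mathbf{Z}\|_{\pvar;[t,T]}^p$. Since $\{N \geq n\} \subseteq \{\|\mathbf{Z}\|_{\pvar}^2 \geq n^{2/p}\}$, an exponential Markov inequality against the $\kappa_p$-moment yields
\[
P(N \geq n) \leq \kappa_p(\delta, \mathbf{W})\, e^{-\delta n^{2/p}}.
\]

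Next, I would compute the moment generating function through the layer-cake identity $E[e^{\lambda N}] = 1 + \int_0^\infty \lambda e^{\lambda y} P(N > y)\, dy$ and split the integral at the threshold $n^{**} := 2^{1/p}\left[\log(2\kappa_p(\delta, \mathbf{W}))/\delta\right]^{p/2}$, i.e.~the level above which the Markov tail drops below a constant. On the range $y \leq n^{**}$, the trivial bound $P(N > y) \leq 1$ integrates to produce the first summand $e^{\lambda n^{**}}$ of the claim, while the $2^{1/p}$ factor in $n^{**}$ provides the headroom needed to absorb the cross-over between the two regimes.

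For the tail range $y > n^{**}$, the Markov bound alone cannot carry the argument: with $p \geq 2$ we have $2/p \leq 1$, so $e^{-\delta y^{2/p}}$ fails to dominate $e^{\lambda y}$ for arbitrary $\lambda$. The remedy is to exploit that $\mathbf{Z}$ is the joint rough-path lift of a Brownian motion $\tilde B$ on $[t,T]$ (of time scale $\sigma = \sqrt{T-t}$) with the deterministic rough path $\mathbf{W}$; conditionally on $\mathbf{W}$, the excess of $N$ beyond its typical size $n^{**}$ is driven by Brownian fluctuations and so obeys a genuinely sub-Gaussian tail $P(N > n^{**} + z) \leq C\exp(-z^2/(c\sigma^2))$. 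Substituting this into the remaining integral and completing the square gives precisely the Gaussian summand $\sqrt{2\pi}\lambda\sigma\, e^{2\lambda^2\sigma^2}$.

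The hard part is pinning down the sub-Gaussian tail above $n^{**}$: the brute Markov bound via $\|\mathbf{Z}\|_{\pvar}^p$ is too weak for $p > 2$, so either a refined Cass--Litterer--Lyons-type estimate on $N$ itself (which is well known to have far sharper tails than $\|\mathbf{Z}\|_{\pvar}^p$) or a conditional decoupling of the Brownian $\tilde B$ from the deterministic $\mathbf{W}$, followed by Fernique-type Gaussian concentration at scale $\sigma$, has to be invoked. The explicit constants $2^{1/p}$, $\sqrt{2\pi}$, and $2$ in the stated bound will fall out of tracking the completion of squares carefully in this last step.
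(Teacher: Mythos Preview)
Your overall architecture---layer-cake representation of $E[e^{\lambda N}]$, a split at a threshold of the form $2^{1/p}[\log(2\kappa_p)/\delta]^{p/2}$, the trivial bound $P\le 1$ below the threshold, and a Gaussian tail above it---is exactly what the paper does. The paper's proof is shorter because it skips your Markov-inequality detour entirely: rather than first bounding $P(N\ge n)$ via $N\le\|\mathbf{Z}\|_{\pvar}^p$ and then discovering this is too weak, it invokes directly the Fernique-type estimate of \cite[Theorem~17]{DOR15}, which states that for $r\ge r_0$,
\[
P\big(N(\mathbf{Z};[t,T])>r\big)\ \le\ 1-\Phi\!\left(\alpha+\frac{r}{2\sigma}\right),
\]
with explicit constants $r_0 = 2^{1/p}K^p$, $a = 1-\kappa_p/e^{\delta K^2}$, $\alpha=\Phi^{-1}(a)$ tied to the free parameter $K$. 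This is precisely the ``refined Cass--Litterer--Lyons-type estimate'' you allude to in your last paragraph; it is the black box doing all the work, and the paper simply cites it. After plugging this into the layer-cake integral one gets a closed expression involving $\erfc$, bounds $\erfc\le 2$, and then minimizes over $K$ to land on $K=\sqrt{\log(2\kappa_p)/\delta}$---which is why your threshold $n^{**}$ already has the right form.

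So there is no genuine gap in your plan, but the initial Markov step is dead weight: it neither feeds into the final bound nor helps locate the threshold (the threshold arises from the optimization over $K$ in the Fernique constants, where the Markov-type condition $\kappa_p/e^{\delta K^2}<1$ ensures $\alpha$ is well defined, and $\alpha\ge 0$ requires $K^2\ge\delta^{-1}\log(2\kappa_p)$). If you want to turn your proposal into a proof, replace the first two paragraphs by a direct citation of the Fernique tail bound for $N$ from \cite{DOR15}, then carry out the layer-cake computation and the optimization explicitly.
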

\begin{proof}
  Choose $K > \sqrt{\f{\log \kappa_p(\delta, \mathbf{W})}{\delta}}$ and define
  \begin{equation*}
    r_0 \coloneqq 2^{1/p} K^p, \quad a \coloneqq 1 - \f{\kappa_p(\delta,
      \mathbf{W})}{\exp(\delta K^2)}, \quad \alpha \coloneqq \Phi^{-1}(a),
  \end{equation*}
  where $\Phi$ denotes the c.d.f.~of the standard normal distribution and we
  note that $0 < a < 1$ by our conditions. 
  The result follow from the Fernique type estimate in \cite[Theorem
  17]{DOR15}, which shows that
  \begin{equation*}
    P\left(  N(\mathbf{Z} ; [t,T]) > r \right) \le 1 - \Phi\left( \alpha
      + \f{r}{2\sigma} \right), \quad r \ge r_0.
  \end{equation*}
  (The choice of constants follows from \cite[Lemma 19, Theorem 20, Lemma 22, proof of
  Corollary 23]{DOR15}, for $q = 1$.) Using this estimate, the
  integration by parts formula
  \begin{equation*}
    E\left[\exp(\lambda N(\mathbf{Z} ; [t,T]) \right] = \int_0^\infty
    P\left( N(\mathbf{Z} ; [t,T]) > \f{1}{\lambda} \log x\right) dx
  \end{equation*}
  and the estimate $1-\Phi(x) \le \f{1}{2} e^{-x^2/2}$ (where the
  Gaussian tail estimate applies, i.e., for
  $r = \f{1}{\lambda} \log x \ge r_0$) together with the trivial estimate of
  any probability by $1$ (where the estimate does not apply, i.e., for
  $r = \f{1}{\lambda} \log x < r_0$), directly gives
  \begin{multline}
    \label{eq:mgf-N}
    E\left[\exp\left( \lambda N(\mathbf{Z} ; [t,T]) \right) \right] \le
    e^{\lambda r_0} + \sqrt{\f{\pi}{2}} \sigma \lambda \exp\left( 2 \sigma
      \lambda (\sigma\lambda - \alpha) \right) \times \cdots \\
    \cdots \times \erfc\left( \f{1}{\sqrt{2}} \left( \alpha - 2 \sigma \lambda
      + \f{r_0}{2\sigma}\right) \right).
  \end{multline}

  Note that $K^2 \ge \f{1}{\delta} \log(2 \kappa_p)$ implies $\alpha \ge 0$.
  Using the trivial bound $\erfc \le 2$, we further obtain
  \begin{equation*}
    E\left[\exp\left( \lambda  N(\mathbf{Z} ; [t,T]) \right) \right] \le
    e^{2^{1/p} \lambda K^p} + \sqrt{2 \pi} \sigma \lambda e^{2 \lambda^2 \sigma^2}.
  \end{equation*}
  The right hand side is now minimized by $K = \sqrt{\f{1}{\delta}
    \log(2 \kappa_p)}$, which gives~\eqref{eq:mgf-N-2}.
\end{proof}

Finally, we consider bounds for the derivatives of the solution $u(t,x)$
of~\eqref{eq:Cauchy-RPDE}. For ease of notation, we will \emph{formally} only
consider the case $c \equiv 0$, $\gamma \equiv 0$, i.e.,
\begin{equation*}
  u(t,x;\mathbf{W}) = E^{t,x}\left[ g(X_T) \right].
\end{equation*}
However, note that we do allow $g$ and its derivatives to have exponential growth in what follows,
and it is thus easy to incorporate the general setting by extending the state
space. For this, we just need to add an additional component $Y_t$ solving
\begin{equation*}
  dY_s = c(X_s) ds + \gamma(X_s) d\mathbf{W}_s, \quad Y_t = 0,
\end{equation*}
and consider
\begin{equation*}
  u(t,x; \mathbf{W}) = E^{t,x}\left[ g(X_T) \exp(Y_T) \right].
\end{equation*}

\begin{corollary}
\label{cor:main}
 Let $u(t,x,\mathbf{W})$ be as above. Assume that $g$ is $k$-times differentiable and that there are constants $\zeta_1$, $\zeta_2 \geq 0$ such that
 \begin{align*}
  |D^l g(x)| \leq \zeta_1 e^{\zeta_2 |x|}
 \end{align*}
 for all $x \in \R^n$ and $l = 1,\ldots, k$. Assume that $\hat{b}$, $\sigma$ and $\beta$ are bounded, $(2 + k)$-times differentiable with bounded derivatives, and let $K > 0$ be a bound for their norms, i.e.
 \begin{align*}
  \|\hat{b}\|_{\mathcal{C}_b^{2+k}} \vee \|\sigma\|_{\mathcal{C}_b^{2+k}} \vee  \|\beta \|_{\mathcal{C}_b^{2+k}} \leq K.
 \end{align*}
 Then there are constants $C = C(p,k)$ and $C_1 = C_1(p)$ such that
 \begin{align*}
  |\partial^k_x u(t,x,\mathbf{W}) | &\leq C \zeta_1 \lambda e^{\zeta_2(|x| + 1)} \left( 1 + \sqrt{(T-t)} (\zeta_2 C_1 + C) K^p \right) \\
   &\quad \times \exp \left((\zeta_2 C_1 + C) K^p \left( 1 + \left( \f{\log\left(2 \kappa_p(\delta, \mathbf{W}) \right)}{\delta} \right)^{p/2} + (T-t) (\zeta_2 C_1 + C) K^{p} \right) \right)
 \end{align*}
 where we use the same notation as in Lemma \ref{lem:mgf-N}.
\end{corollary}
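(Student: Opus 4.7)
The plan is to combine three ingredients: the deterministic derivative bounds from Theorem~\ref{thm:bounds_flow_deriv}, a Faà di Bruno expansion of $g(X_T^{t,x})$, and the exponential moment bound from Lemma~\ref{lem:mgf-N}, all applied to the hybrid Stratonovich-rough SDE~\eqref{eq:rough_Stra-SDE}. Formally, I start by differentiating the representation $u(t,x;\mathbf{W}) = E^{t,x}[g(X_T)]$ under the expectation, which yields
\begin{equation*}
 \partial_x^k u(t,x;\mathbf{W}) = E\left[ \sum_{\pi \in \mathcal{P}(k)} c_\pi\, D^{|\pi|} g(X_T^{t,x}) \cdot \bigotimes_{B \in \pi} D^{|B|} X_T^{t,x} \right],
\end{equation*}
where the sum runs over set partitions of $\{1,\ldots,k\}$ via the classical chain rule.

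Next I would plug in the pointwise bounds. For each variation $X^{(j)} := D^j_x X$ applied to~\eqref{eq:rough_Stra-SDE}, Theorem~\ref{thm:bounds_flow_deriv} gives
\begin{equation*}
 \|X^{(j)}\|_\infty \leq \mathbf{1}_{\{j=1\}} + C \|\mathbf{Z}\|_{p\text{-var};[t,T]} K \exp\bigl( CK^p (N(\mathbf{Z};[t,T])+1) \bigr),
\end{equation*}
since $X^{(1)}_t = \mathrm{Id}$ and $X^{(j)}_t = 0$ for $j\geq 2$. The exponential growth hypothesis gives $|D^{|\pi|}g(X_T)| \leq \zeta_1 e^{\zeta_2 |X_T|}$, and I would bound $|X_T^{t,x}|\leq |x|+C_1 K(N(\mathbf{Z};[t,T])+1)$ by summing the greedy Davie-type estimate $|X_{\tau_{i+1}}-X_{\tau_i}|\leq C_1 K$ across the $N+1$ subintervals on which $\|\mathbf{Z}\|_{p\text{-var}}^p\leq 1$. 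Collecting these pieces and using $|\pi|\leq k$, the integrand is dominated by
\begin{equation*}
 C_k \zeta_1 e^{\zeta_2(|x|+1)} \bigl(1+\|\mathbf{Z}\|_{p\text{-var};[t,T]} K \bigr)^k \exp\bigl( (\zeta_2 C_1 + C_k K^{p}) (N(\mathbf{Z};[t,T])+1) \bigr).
\end{equation*}

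Taking expectations, I would split via Cauchy-Schwarz: the polynomial-in-$\|\mathbf{Z}\|$ factor has finite moments of all orders by the Fernique-type hypothesis $\kappa_p(\delta,\mathbf{W}) < \infty$, and the exponential-in-$N$ factor is controlled by Lemma~\ref{lem:mgf-N} applied with $\lambda = \zeta_2 C_1 + C_k K^p$ (times a harmless constant from Cauchy-Schwarz). The two additive terms in~\eqref{eq:mgf-N-2} produce exactly the stated shape of the bound: the first term yields the $\exp\bigl(\lambda(\log(2\kappa_p)/\delta)^{p/2}\bigr)$ factor, and the second term produces both the linear-in-$\sqrt{T-t}$ prefactor and the $\exp\bigl((T-t)\lambda^2\bigr)$ contribution. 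Absorbing $K\leq K^p$ (assuming $K\geq 1$, else trivial) and collecting terms gives the claimed estimate.

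The main obstacle is not the chain-rule expansion itself, but two bookkeeping issues. First, one must rigorously justify the interchange of $\partial_x^k$ and $E$; this requires showing local uniform integrability of $\partial_x^k[g(X_T^{t,x})]$ in $x$, which I would handle by observing that the Theorem~\ref{thm:bounds_flow_deriv} bounds are independent of $x$ and that the $x$-dependence enters only through the prefactor $e^{\zeta_2|x|}$, combined with an induction on $k$ where the $(k-1)$st derivative's continuity in $x$ is used. Second, combining the two additive branches from Lemma~\ref{lem:mgf-N} with the Cauchy-Schwarz splitting of $\|\mathbf{Z}\|_{p\text{-var}}^k$ cleanly---so that the final constants appear in the compact form $(\zeta_2 C_1 + C)K^p$ rather than as a messy polynomial expression---requires repeated use of the inequalities $ab\leq \tfrac12(a^2+b^2)$ and $K\vee K^p \leq 1 + K^p$, together with absorbing the polynomial factor into a single multiplicative constant.
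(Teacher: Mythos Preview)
Your overall architecture---Fa\`a di Bruno on $g(X_T^{t,x})$, the deterministic flow-derivative bounds of Theorem~\ref{thm:bounds_flow_deriv}, a pointwise bound on $|X_T^{t,x}|$ in terms of $N(\mathbf{Z})$, and then Lemma~\ref{lem:mgf-N}---is exactly the paper's route. The one substantive divergence is how you dispose of the polynomial factor $(1+\|\mathbf{Z}\|_{p\text{-var}}K)^k$: you propose Cauchy--Schwarz, splitting this factor from the $\exp(\lambda N)$ term and bounding its moments via Fernique. The paper instead uses the elementary inequality (from \cite[Lemmas~1,~3,~4]{FR13}) $\|\mathbf{Z}\|_{p\text{-var}}^p \leq 2N(\mathbf{Z})+1$, which immediately gives $(\|V\|\,\|\mathbf{Z}\|_{p\text{-var}})^l \leq \exp(k\|V\|^p(2N+1))$ and so absorbs the entire polynomial factor into the exponential of $N$. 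This allows a \emph{single} application of Lemma~\ref{lem:mgf-N} with $\lambda=(\zeta_2 C_1+C)K^p$ and produces the stated bound directly. Your Cauchy--Schwarz route would yield a valid inequality, but with an extra factor $\bigl(E\|\mathbf{Z}\|_{p\text{-var}}^{2k}\bigr)^{1/2}$ that does not collapse to the announced form unless you also invoke $\|\mathbf{Z}\|^p\lesssim N$---at which point Cauchy--Schwarz is superfluous.

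One minor correction: your greedy estimate $|X_{\tau_{i+1}}-X_{\tau_i}|\leq C_1K$ over the $N(\mathbf{Z})+1$ intervals with $\|\mathbf{Z}\|_{p\text{-var}}^p\leq 1$ is not quite right when $K$ is large, since Davie's local bound requires $K\|\mathbf{Z}\|_{p\text{-var}}$ (not just $\|\mathbf{Z}\|_{p\text{-var}}$) to be below a threshold. The paper handles this by the rescaling $V\mapsto V/K$, $\mathbf{Z}\mapsto K\mathbf{Z}$, which converts $N(\mathbf{Z})$ into $\sim K^p N(\mathbf{Z})$ and explains the $K^p$ (rather than $K$) in the exponent.
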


\begin{proof}
  Recall that the solution $X_v^{t,x}$ to
  \begin{align*}
   dX^{t,x}_v = \hat{b}(X^{t,x}_v)\, dv + \sigma(X^{t,x}_v)\,\circ dB_v +  \beta(X^{t,x}_v)\, d \mathbf{W}_v; \quad X^{t,x}_t = x
  \end{align*}
  equals the solution to
  \begin{align*}
   dX^{t,x}_v = V(X^{t,x}_v)\, d\mathbf{Z}_v; \quad X^{t,x}_t = x
  \end{align*}
  where $V = (\hat{b}, \sigma, \beta) \colon \R^n \to L(\R^{1 + m + d}, \R^n)$ and $\mathbf{Z}$ denotes the joint geometric rough path lift of $v \mapsto (v, B_v, W_v)$.
 Iterating the chain rule, we see that
 \begin{align*}
  \partial^k_x g(X^{t,x}_v) = \sum_{l = 1}^k \sum_{i_1 + \ldots + i_l = k}  \lambda_{i_1, \ldots, i_k} (D^l g) (X^{t,x}_v) (D_x^{i_1} X^{t,x}_v \otimes \ldots \otimes D_x^{i_l} X^{t,x}_v) 
 \end{align*}
 where $1 \leq i_1, \ldots i_l \leq k$ and $\lambda_{l_1, \ldots, i_k}$ are nonnegative integers which can be calculated explicitly (using e.g. Fa\`a di Bruno's formula). Thus we obtain
 \begin{align*}
  | \partial^k_x u(t,x) | = |E( \partial^k_x g(X_T^{t,x}))| \leq \sum \lambda_{i_1, \ldots, i_l} \zeta_1 E \left( \exp({\zeta_2|X^{t,x}_T|}) |D_x^{i_1} X^{t,x}_T|\cdots | D_x^{i_l} X^{t,x}_T| \right). 
 \end{align*}
 As in the proof of Theorem \ref{thm:bounds_flow_deriv}, one can see that there is a constant $C_1$ depending only on $p$ such that
 \begin{align*}
  |X^{t,x}_T| \leq |x| + N(X^{t,x} ; [t,T]) + 1 \leq |x| + C_1 \|V\|_{\mathcal{C}_b^{2+k}}^p ( N( \mathbf{Z} ; [t,T]) + 1) + 1.
 \end{align*}
 The bounds of Theorem \ref{thm:bounds_flow_deriv} imply that
 \begin{align*}
  |D_x^{i_1} X^{t,x}_T|\cdots | D_x^{i_l} X^{t,x}_T| &\leq \left( 1 + C \|\mathbf{Z}\|_{p-\text{var};[t,T]} \|V\|_{\mathcal{C}^{2 + k}_b} \exp \left( C \|V\|_{\mathcal{C}^{2 + k}_b}^p( N( \mathbf{Z} ; [t,T]) + 1) \right) \right)^l \\
  &\leq 2^{k-1} + C \|\mathbf{Z}\|_{p-\text{var};[t,T]}^l \|V\|_{\mathcal{C}^{2 + k}_b}^l \exp \left( C \|V\|_{\mathcal{C}^{2 + k}_b}^p( N( \mathbf{Z} ; [t,T]) + 1) \right)
 \end{align*}
 for a constant $C$ depending on $p$ and $k$. Therefore, we see that there is a constant $\lambda$ depending on $k$ only such that
 \begin{align*}
  | \partial^k_x u(t,x) | 
  &\leq \zeta_1 \lambda e^{\zeta_2(|x| + 1)} E \left( \exp \left( \zeta_2 C_1 \|V\|_{\mathcal{C}^{2 + k}_b}^p( N( \mathbf{Z} ; [t,T]) + 1) \right) \right) \\
  &\quad + C \zeta_1 \lambda e^{\zeta_2(|x| + 1)} \max_{l = 1,\ldots, k} \|V\|_{\mathcal{C}^{2 + k}_b}^l E \left( \|\mathbf{Z}\|_{p-\text{var};[t,T]}^l \exp \left( (\zeta_2 C_1 + C) \|V\|_{\mathcal{C}^{2 + k}_b}^p( N( \mathbf{Z} ; [t,T]) + 1 )\right) \right).
 \end{align*}
 We use \cite[Lemma 4, Lemma 1 and Lemma 3]{FR13} to see that for every $l = 1, \ldots, k$,
 \begin{align*}
   (\|V\|_{\mathcal{C}^{2 + k}_b} \|\mathbf{Z}\|_{p-\text{var};[t,T]})^l \leq \exp( k \|V\|_{\mathcal{C}^{2 + k}_b}^p (2 N( \mathbf{Z} ; [t,T]) + 1) ).
 \end{align*}
 This implies that
 \begin{align*}
  | \partial^k_x u(t,x) | \leq  C \zeta_1 \lambda e^{\zeta_2(|x| + 1)} E \left( \exp \left( (\zeta_2 C_1 + C) \|V\|_{\mathcal{C}^{2 + k}_b}^p( N( \mathbf{Z} ; [t,T]) + 1) \right) \right).
 \end{align*}
 Now we use Lemma \ref{lem:mgf-N} to obtain the bound 
 \begin{align*}
   &\ C \zeta_1 \lambda e^{\zeta_2(|x| + 1)} E \left( \exp \left( (\zeta_2 C_1 + C) \|V\|_{\mathcal{C}^{2 + k}_b}^p( N( \mathbf{Z} ; [t,T]) + 1) \right) \right)  \\  
   &\leq C \zeta_1 \lambda e^{\zeta_2(|x| + 1) + (\zeta_2 C_1 + C) \|V\|_{\mathcal{C}^{2 + k}_b}^p}  E \left( \exp \left( (\zeta_2 C_1 + C) \|V\|_{\mathcal{C}^{2 + k}_b}^p( N( \mathbf{Z} ; [t,T]) ) \right) \right) \\
   &\leq  C \zeta_1 \lambda e^{\zeta_2(|x| + 1)} \left( 1 + \sqrt{2\pi (T-t)} (\zeta_2 C_1 + C) \|V\|_{\mathcal{C}^{2 + k}_b}^p \right) \\
   &\quad \times \exp \left((\zeta_2 C_1 + C) \|V\|_{\mathcal{C}^{2 + k}_b}^p\left( 1 + 2^{1/p} \left( \f{\log\left(2 \kappa_p(\delta, \mathbf{W}) \right)}{\delta} \right)^{p/2} + 2(T-t) (\zeta_2 C_1 + C) \|V\|_{\mathcal{C}^{2 + k}_b}^{p} \right) \right)
 \end{align*}
 and our claim follows.
\end{proof}




\section{Regression}
\label{sec:regression}

From the numerical point of view it is desirable to have a functional approximation for the solution   $u,$ i.e., to have an approximation of the form
\begin{eqnarray}
\label{eq: func_app}
u(t,x)\approx\sum_{k=0}^K a_k(t)\psi_k(x),
\end{eqnarray}
for some natural \(K>0,\) where \((\psi_k(x))\) are some simple basis functions and the coefficients \((a_k(t))\) depend only on \(t.\)
Such an approximation can be then used to perform integration, differentiation and optimization of \(u(t,x)\) in a fast way. In this section we are going to use nonparametric regression to construct approximations of the type  \eqref{eq: func_app}.
First we turn to the problem of approximating  \(u(t,x)\) for a fixed \(t>0\) and then consider   approximation of the solution \(u\) in space and time. While the first problem can  be solved using a simplified version of linear regression called pseudo-regression, for the second task  we need to use  general nonparametric regression algorithms.

\subsection{Spacial resolution obtained by regression}
\label{pseudo-res}
The representation~\eqref{eq:stoch_rep} implies,%
\begin{equation}
u(t,X_{t}^{0,x})={E}_{\mathcal{F}_{t}}\left[  g(X_{T}^{t,X_{t}^{0,x}%
})Y_{T}^{t,X_{t}^{0,x},1}\right]  ,\label{impl}%
\end{equation}
where $\left(  \mathcal{F}_{s}\right)  _{0\leq s\leq T}$ denotes the
filtration generated by $B.$ (Recall the notation introduced in
Remark~\ref{rem:gen-cauchy}.) From~\eqref{eq:stoch_rep_extended} we observe
that for $s\geq t,$
\begin{align}
Y_{s}^{t,X_{t}^{0,x},1} &  =\exp\left[  \int_{t}^{s}c\left(  X_{r}%
^{t,X_{t}^{0,x}}\right)  dr+\gamma^{\top}\left(  X_{r}^{t,X_{t}^{0,x}}\right)
d\mathbf{W}_{r}\right]  \nonumber\\
&  =\exp\left[  \int_{t}^{s}c\left(  X_{r}^{0,x}\right)  dr+\gamma^{\top
}\left(  X_{r}^{0,x}\right)  d\mathbf{W}_{r}\right]  \nonumber\\
&  =\frac{\exp\left[  \int_{0}^{s}c\left(  X_{r}^{0,x}\right)  dr+\gamma
^{\top}\left(  X_{r}^{0,x}\right)  d\mathbf{W}_{r}\right]  }{\exp\left[  \int_{0}%
^{t}c\left(  X_{r}^{0,x}\right)  dr+\gamma^{\top}\left(  X_{r}^{0,x}\right)
d\mathbf{W}_{r}\right]  }=\frac{Y_{s}^{0,x,1}}{Y_{t}^{0,x,1}}.\label{y0}%
\end{align}
Due to (\ref{y0}), (\ref{impl}) yields%
\begin{equation}
u(t,X_{t}^{0,x})={E}_{\mathcal{F}_{t}}\left[  g(X_{T}^{0,x})\frac
{Y_{T}^{0,x,1}}{Y_{t}^{0,x,1}}\right]  .\label{repu}%
\end{equation}
\bigskip

We now aim at estimating  $u(t,x)$ for a fixed $t,$ $0\leq t\leq T,$
globally in $x\in\mathbb{R}^{n},$ based on the stochastic representation
(\ref{repu}). Let us consider a random variable $\mathcal{U}$ ranging over
some domain $\mathcal{D}\subset\mathbb{R}^{n},$ with distribution $\mu.$ Given
$\mathcal{U},$ we then consider the random trajectory%
\[
\left(  X_{s}^{0,\mathcal{U}},Y_{s}^{0,\mathcal{U},1}\right)  _{0\leq s\leq
T},
\]
which is understood in the sense that the Brownian trajectory $B$ is
independent of $\mathcal{U}$. At time $s=0$ we sample i.i.d.~copies
$\mathcal{U}_{1},...,\mathcal{U}_{M}$ of $\mathcal{U}.$ We then construct a
collection of ``training paths'' $\mathcal{D}_{M}^{tr},$ consisting of
independent realizations
\begin{equation}
\mathcal{D}_{M}^{tr} \coloneqq \left\{ \left(  X_{s}^{0,\mathcal{U}_{m};m}%
,Y_{s}^{0,\mathcal{U}_{m},1;m}\right)  _{0\leq s\leq T} \mid m=1,\ldots,M \right\},\label{sa}
\end{equation}
again based on independent realizations of the Brownian motion $B$. Next
consider for a fixed time $t,$ $0\leq t\leq T,$ the vector
$\mathcal{Y}^{(t)}\in\mathbb{R}^{M},$ where%
\begin{equation}
\mathcal{Y}_{m}^{(t)} \coloneqq g\left(  X_{T}^{0,\mathcal{U}_{m};m}\right)  \frac
{Y_{T}^{0,\mathcal{U}_{m},1;m}}{Y_{t}^{0,\mathcal{U}_{m},1;m}}.\label{rany}%
\end{equation}
Now let $\psi_{1},...,\psi_{K}$ be a set of basis functions on $\mathbb{R}%
^{n}$ and define define a matrix  $\mathcal{M}^{(t)}
\in\mathbb{R}^{M\times K}$ by
\[
\mathcal{M}_{mk}^{(t)} \coloneqq \psi_{k}\left(  X_{t}^{0,\mathcal{U}_{m};m}\right),
\]
In the next step we solve the least squares problem
\begin{equation}
\label{reg}
\widehat{\gamma}^{(t)} \coloneqq \underset{\gamma\in\mathbb{R}^{K}}%
{\arg\min}\frac{1}{M}\sum_{m=1}^{M}\left(  \mathcal{Y}_{m}^{(t)}-\sum
_{k=1}^{K}\mathcal{M}_{mk}^{(t)}\gamma_{k}\right)  ^{2}
=\left(  \left(  \mathcal{M}^{(t)}\right)  ^{\top}\mathcal{M}^{(t)}\right)
^{-1}\left(  \mathcal{M}^{(t)}\right)  ^{\top}\mathcal{Y}^{(t)}.
\end{equation}
This gives an approximation%
\begin{equation}
\widehat{u}(t,x)=\widehat{u}(t,x;\mathcal{D}_{M}^{tr})\coloneqq \sum_{k=1}%
^{K}\text{\ }\widehat{\gamma}_{k}^{(t)}\psi_{k}(x)\label{fullstoch}%
\end{equation}
of $u$.
Thus, with one and the same sample (\ref{sa}) we may so get for different
times $t$ and states $x$ an approximate solution $\widehat{u}(t,x).$
Let us first consider the particular case $t=0,$ where we have%
\[
\mathcal{M}_{mk}^{(0)}:=\psi_{k}\left(  \mathcal{U}_{m}\right)  ,\text{
\ \ }\mathcal{Y}_{m}^{(0)}=g\left(  X_{T}^{0,\mathcal{U}_{m};m}\right)
Y_{T}^{0,\mathcal{U}_{m},1;m}%
\]
and then (\ref{reg}) reads%
\begin{equation}
\text{\ }\widehat{\gamma}^{(0)}=\frac{1}{M}\left(  \frac{1}{M}\left(
\mathcal{M}^{(0)}\right)  ^{\top}\mathcal{M}^{(0)}\right)  ^{-1}\left(
\mathcal{M}^{(0)}\right)  ^{\top}\mathcal{Y}^{(0)}%
.\label{eq:semistochastic-coefficients2}%
\end{equation}
Instead of the inverted random matrix in
(\ref{eq:semistochastic-coefficients2}) we may turn over to a so called
pseudo-regression estimator where the matrix entries%
\[
\left[  \frac{1}{M}\left(  \mathcal{M}^{(0)}\right)  ^{\top}\mathcal{M}%
^{(0)}\right]  _{kl}=\frac{1}{M}\sum_{m=1}^{M}\psi_{k}\left(  \mathcal{U}%
_{m}\right)  \psi_{l}\left(  \mathcal{U}_{m}\right)
\]
are replaced by their limits as $M \to \infty$, i.e., by the scalar products%
\[
\mathcal{G}_{kl}:=\langle\psi_{k},\psi_{l}\rangle:=\int_{\mathcal{D}}\psi
_{k}\left(  z\right)  \psi_{l}\left(  z\right)  \mu(dz).
\]
That is, we may also consider the estimate%
\begin{align}
\tilde{u}(0,x) &  :=\tilde{u}(0,x;\mathcal{D}_{M}^{tr}):=\sum_{k=1}%
^{K}\text{\ }\tilde{\gamma}_{k}^{(0)}\psi_{k}(x)\text{ \ \ with}%
\label{semstoch}\\
\tilde{\gamma}^{(0)} &  :=\frac{1}{M}\mathcal{G}^{-1}\left(
\mathcal{M}^{(0)}\right)  ^{\top}\mathcal{Y}^{(0)}.\label{gass}%
\end{align}
The interesting point is that in (\ref{semstoch}) we may freely choose both the initial
measure, and the set of basis functions. So by a suitable choice
of basis functions $\left(  \psi_{k}\right)  $ and initial measure $\mu,$ we
may arrange the matrix $\mathcal{G}$ to be known explicitly, or even that
$\mathcal{G}=\mathrm{Id}$ (the identity matrix), thus simplifying the regression
procedure significantly from a computational point of view. Indeed, the cost
of computing (\ref{reg}) in (\ref{fullstoch}) is of order $MK^{2}$ while the
cost of computing (\ref{gass}) is only of order $MK.$

It should be emphasized that the function estimates (\ref{fullstoch}) and
(\ref{semstoch}) are random as they depend on the simulated training paths
(\ref{sa}). In the next section we study mean-squares-estimation errors in a
suitable sense for the particular case (\ref{semstoch}), and for the general
case (\ref{fullstoch}), respectively.

\subsubsection{Error analysis}
\label{sec:spac-resol-obta} For the error analysis of the pseudo-regression
method (\ref{semstoch}) we could basically refer to Anker et
al.~\cite{Ank2017}, where pseudo regression is applied in the context of
global solutions for random PDEs. For the convenience of the reader, however,
let us here recap the analysis in condensed form, consistent with the present
context and terminology. For the formulation of the theorem and its proof
below, let us abbreviate (cf.~(\ref{rany}) and (\ref{semstoch}))%
\begin{align*}
\mathcal{V} &  \coloneqq g(X_{T}^{0,\mathcal{U}})Y_{T}^{0,\mathcal{U},1},\text{
\ \ }v(z)\coloneqq u(0,z),\text{ \ \ }\tilde{v}(z)\coloneqq\tilde{u}(0,z),\\
\mathcal{V}^{(m)} &  \coloneqq g(X_{T}^{0,\mathcal{U}^{(m)};m})Y_{T}^{0,\mathcal{U}%
^{(m)},1;m},\text{ \ \ }\mathcal{M}\coloneqq\mathcal{M}^{(0)},\text{ \ \ }%
\mathcal{Y}\coloneqq\mathcal{Y}^{(0)},\text{ \ \ }\tilde{\gamma}\coloneqq\tilde
{\gamma}^{(0)}.
\end{align*}

\begin{theorem}
\label{psth} Suppose that%
\begin{gather*}
\left\vert v(z)\right\vert \leq A\text{ \ \ and \ \ }\mathsf{Var}\left[
\mathcal{V}|\mathcal{U}=z\right]  <\sigma^{2},\text{ \ \ for all }%
z\in\mathcal{D},\\
0<\underline{\lambda_{\min}}\leq\lambda_{\min}\left(  \mathcal{G}^{K}\right)
\leq\lambda_{\max}\left(  \mathcal{G}^{K}\right)  \leq\overline{\lambda_{\max
}},\text{ \ \ for all }K=1,2,...,
\end{gather*}
where $\lambda_{\min}\left(  \mathcal{G}^{K}\right)  ,$ and $\lambda_{\max
}\left(  \mathcal{G}^{K}\right)  ,$ denote the smallest, respectively largest,
eigenvalue of the positive symmetric matrix $\mathcal{G}^K \coloneqq \left(
  \mathcal{G}_{ij} \right)_{1 \le i,j \le K}.$ Then it holds,
\begin{multline}\label{th1}
\E \int_{\mathcal{D}}\left\vert \tilde{v}(z)-v(z)\right\vert
^{2}\mu(dz) \leq\frac{\overline{\lambda_{\max}}}{\underline{\lambda_{\min}}}\left(
\sigma^{2}+A^{2}\right)  \frac{K}{M}+\\
+\underset{w \in \spn\{\psi
_{1},...,\psi_{K}\}}{\inf}\int_{\mathcal{D}}\left\vert w(z)-v(z)\right\vert
^{2}\mu(dz).
\end{multline}

\end{theorem}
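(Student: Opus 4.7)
The plan is to perform a standard bias–variance decomposition of the $L^2(\mu)$-error, exploiting that $\tilde{\gamma}$ is (by construction of the pseudo-regression) an unbiased estimator of the vector of coefficients of the best $L^2(\mu)$-approximation of $v$ in $\spn\{\psi_1,\ldots,\psi_K\}$. First, let $w^\ast=\sum_{k=1}^K\gamma^\ast_k\psi_k$ denote the orthogonal projection of $v$ onto $\spn\{\psi_1,\ldots,\psi_K\}$ in $L^2(\mu)$, so that $\gamma^\ast=\mathcal{G}^{-1}\langle\psi,v\rangle$ where $\langle\psi,v\rangle_k=\int\psi_k(z)v(z)\mu(dz)$. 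Since $w^\ast-v$ is orthogonal to $\spn\{\psi_1,\ldots,\psi_K\}$ and $\tilde v-w^\ast$ lies in this span, Pythagoras gives
\begin{equation*}
\int_{\mathcal{D}}|\tilde v(z)-v(z)|^2\mu(dz)=\int_{\mathcal{D}}|\tilde v(z)-w^\ast(z)|^2\mu(dz)+\int_{\mathcal{D}}|w^\ast(z)-v(z)|^2\mu(dz),
\end{equation*}
and since $w^\ast$ minimises the second term over the span, it suffices to bound the expectation of the first term.

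Next, rewrite $\tilde{\gamma}=\mathcal{G}^{-1}\cdot\frac{1}{M}\sum_{m=1}^M\psi(\mathcal{U}^{(m)})\mathcal{V}^{(m)}$ where $\psi=(\psi_1,\ldots,\psi_K)^\top$. Using the tower property and $E[\mathcal{V}\,|\,\mathcal{U}=z]=v(z)$, the summands satisfy $E[\psi(\mathcal{U})\mathcal{V}]=\langle\psi,v\rangle=\mathcal{G}\gamma^\ast$, so $E[\tilde\gamma]=\gamma^\ast$. Writing $\tilde v(z)-w^\ast(z)=\psi(z)^\top(\tilde\gamma-\gamma^\ast)$, one has $\int|\tilde v-w^\ast|^2\mu(dz)=(\tilde\gamma-\gamma^\ast)^\top\mathcal{G}(\tilde\gamma-\gamma^\ast)\le\overline{\lambda_{\max}}\,|\tilde\gamma-\gamma^\ast|^2$, so
\begin{equation*}
E\int_{\mathcal{D}}|\tilde v-w^\ast|^2\mu(dz)\le\overline{\lambda_{\max}}\,\trace\bigl(\operatorname{Cov}(\tilde\gamma)\bigr).
\end{equation*}

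By independence of the samples, $\operatorname{Cov}(\tilde\gamma)=\frac{1}{M}\mathcal{G}^{-1}\operatorname{Cov}(\psi(\mathcal{U})\mathcal{V})\mathcal{G}^{-1}$. I would then bound $\operatorname{Cov}(\psi(\mathcal{U})\mathcal{V})\preceq E[\psi(\mathcal{U})\mathcal{V}^2\psi(\mathcal{U})^\top]$ in the positive semidefinite order and condition on $\mathcal{U}$: the assumption $|v|\le A$ together with the conditional variance bound yields $E[\mathcal{V}^2\mid\mathcal{U}=z]=\var[\mathcal{V}\mid\mathcal{U}=z]+v(z)^2\le\sigma^2+A^2$, so $\operatorname{Cov}(\psi(\mathcal{U})\mathcal{V})\preceq(\sigma^2+A^2)\mathcal{G}$. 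Consequently $\operatorname{Cov}(\tilde\gamma)\preceq\frac{\sigma^2+A^2}{M}\mathcal{G}^{-1}$ and, via $\trace(\mathcal{G}^{-1})\le K/\underline{\lambda_{\min}}$,
\begin{equation*}
E\int_{\mathcal{D}}|\tilde v-w^\ast|^2\mu(dz)\le\frac{\overline{\lambda_{\max}}}{\underline{\lambda_{\min}}}(\sigma^2+A^2)\frac{K}{M},
\end{equation*}
which combined with the projection identity yields \eqref{th1}.

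The argument is essentially routine linear algebra, the only delicate points being the (i) unbiasedness of $\tilde\gamma$ (which is genuine only for pseudo-regression where the empirical Gram matrix $\frac{1}{M}\mathcal{M}^\top\mathcal{M}$ has been replaced by $\mathcal{G}$—otherwise one would face the much harder fully stochastic case treated separately), and (ii) the conditional second-moment bound, which hinges crucially on the boundedness hypothesis $|v|\le A$ and would fail without it. The former is where the advantage of pseudo-regression over least-squares really shows: no matrix concentration is needed, and the dependence on $K$ is kept linear rather than logarithmic.
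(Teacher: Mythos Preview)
Your proof is correct and follows essentially the same route as the paper's: Pythagoras' decomposition into the projection error plus the variance term, unbiasedness of $\tilde\gamma$ via $E[\psi(\mathcal U)\mathcal V]=\mathcal G\gamma^\ast$, and a conditional second-moment bound $E[\mathcal V^2\mid\mathcal U]\le\sigma^2+A^2$. The only cosmetic difference is in the linear algebra bookkeeping: the paper keeps the exact identity $\int|\tilde v-v^K|^2\,\mu=(\tfrac1M\mathcal M^\top\mathcal Y-\alpha)^\top\mathcal G^{-1}(\tfrac1M\mathcal M^\top\mathcal Y-\alpha)$, bounds $\mathcal G^{-1}$ by $1/\underline{\lambda_{\min}}$, computes the coordinate variances $\le\tfrac{\sigma^2+A^2}{M}\mathcal G_{kk}$, and then uses $\operatorname{tr}\mathcal G\le K\overline{\lambda_{\max}}$; you instead bound $\mathcal G$ by $\overline{\lambda_{\max}}$ first, work with the full covariance matrix of $\tilde\gamma$ in the PSD order, and finish with $\operatorname{tr}(\mathcal G^{-1})\le K/\underline{\lambda_{\min}}$---same ingredients, same constant, slightly different order.
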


\begin{proof}
Let $v^{K}$ be the projection of $v$ on to the linear span of $\psi
_{1},\ldots,\psi_{K},$ i.e.,%
\begin{equation}
v^{K}=\underset{w\,\in\,\text{span}\{\psi_{1},\ldots ,\psi_{K}\}}{\arg\inf}%
\int_{\mathcal{D}}\left\vert w(z)-v(z)\right\vert ^{2}\mu(dz).\label{pr}%
\end{equation}
Then, with $\gamma^{\circ}:=(\gamma_{1}^{\circ},...,\gamma_{K}^{\circ})^{\top
}\in\mathbb{R}^{K}$ defined by%
\begin{equation}
v^{K}=\sum_{k=1}^{K}\gamma_{k}^{\circ}\psi_{k},\label{uK}%
\end{equation}
and $\alpha\in\mathbb{R}^{K}$ defined by $\alpha_{k}:=\langle\psi_{k}%
,v\rangle,$ it follows straightforwardly by taking scalar products that%
\begin{equation}
\gamma^{\circ}=\mathcal{G}^{-1}\alpha.\label{ag}%
\end{equation}
By the rule of Pythagoras it follows that,%
\begin{equation}
\label{th11}
\E \int_{\mathcal{D}}\left\vert \tilde{v}(z)-v(z)\right\vert ^{2}%
\mu(dz)=
\E \int_{\mathcal{D}}\left\vert \tilde{v}(z)-v^{K}(z)\right\vert
^{2}\mu(dz)+\int_{\mathcal{D}}\left\vert v^{K}(z)-v(z)\right\vert ^{2}%
\mu(dz).
\end{equation}
With $\psi:=(\psi_{1},...,\psi_{K})^{\top}$ it holds by (\ref{ag}) that,%
\begin{align*}
\E\int_{\mathcal{D}}\left\vert \tilde{v}(z)-v^{K}(z)\right\vert
^{2}\mu(dz)&=\int_{\mathcal{D}}\E\left\vert \tilde{\gamma}^{\top
}\psi(z)-\gamma^{\circ\top}\psi(z)\right\vert ^{2}\mu(dz)\\
&  =\int_{\mathcal{D}}\E\left\vert \left(  \frac{1}{M}\mathcal{Y}%
^{\top}\mathcal{M}-\alpha^{\top}\right)  \mathcal{G}^{-1}\psi(z)\right\vert
^{2}\mu(dz)\\
&  =\int_{\mathcal{D}}\E\left[  \left(  \frac{1}{M}\mathcal{Y}^{\top
}\mathcal{M}-\alpha^{\top}\right)  \mathcal{G}^{-1}\psi(z)\psi^{\top
}(z)\mathcal{G}^{-1}\left(  \frac{1}{M}\mathcal{M}^{\top}\mathcal{Y}%
-\alpha\right)  \right]  \mu(dz)\\
&  =\E\left[  \left(  \frac{1}{M}\mathcal{Y}^{\top}\mathcal{M}%
-\alpha^{\top}\right)  \mathcal{G}^{-1}\left(  \frac{1}{M}\mathcal{M}^{\top
}\mathcal{Y}-\alpha\right)  \right]  ,
\end{align*}
since%
\[
\int_{\mathcal{D}}\left[  \psi(z)\psi^{\top}(z)\right]  _{kl}\mu
(dz)=\langle\psi_{k},\psi_{l}\rangle=\mathcal{G}_{kl}.
\]
We thus have that%
\begin{equation*}
0  \leq\E\int_{\mathcal{D}}\left\vert \tilde{v}(z)-v^{K}%
(z)\right\vert ^{2}\mu(dz)
  \leq\frac{1}{\underline{\lambda_{\min}}}\E\left\vert \frac{1}%
{M}\mathcal{M}^{\top}\mathcal{Y}-\alpha\right\vert ^{2}=\frac{1}%
{\underline{\lambda_{\min}}}\sum_{k=1}^{K}\mathsf{Var}\text{\thinspace}\left[
\frac{1}{M}\mathcal{M}^{\top}\mathcal{Y}\right]  _{k},
\end{equation*}
using that%
\begin{align*}
\E\left[  \frac{1}{M}\mathcal{M}^{\top}\mathcal{Y}\right]  _{k} &
=\frac{1}{M}\E\sum_{m=1}^{M}\psi_{k}(\mathcal{U}^{(m)})\mathcal{V}%
^{(m)}\\
&=\E\left(  \psi_{k}(\mathcal{U}^{(1)})\E\left[  \mathcal{V}%
^{(1)}|\mathcal{U}^{(1)}\right]  \right)  \\
&  =\langle\psi_{k},v\rangle=\alpha_{k}.
\end{align*}
Now, by observing that
\begin{align*}
\mathsf{Var}\text{\thinspace}\left[  \frac{1}{M}\mathcal{M}^{\top}%
\mathcal{Y}\right]  _{k} &  =\mathsf{Var}\text{\thinspace}\left(  \frac{1}%
{M}\sum_{m=1}^{M}\psi_{k}(\mathcal{U}^{(m)})\mathcal{V}^{(m)}\right)  \\
&  =\frac{1}{M}\mathsf{Var}\text{\thinspace}\left(  \psi_{k}(\mathcal{U}%
^{(1)})\mathcal{V}^{(1)}\right)  \\
&  =\frac{1}{M}\E\text{\thinspace}\mathsf{Var}\left[  \psi
_{k}(\mathcal{U}^{(1)})\mathcal{V}^{(1)}|\mathcal{U}^{(1)}\right]  +\frac
{1}{M}\mathsf{Var}\text{\thinspace}\E\left[  \psi_{k}(\mathcal{U}%
^{(1)})\mathcal{V}^{(1)}|\mathcal{U}^{(1)}\right]  \\
&  =\frac{1}{M}\E\text{\thinspace}\left(  \psi_{k}^{2}(\mathcal{U}%
^{(1)})\mathsf{Var}\left[  \mathcal{V}^{(1)}|\mathcal{U}^{(1)}\right]
\right)  +\frac{1}{M}\mathsf{Var}\text{\thinspace}\psi_{k}(\mathcal{U}%
^{(1)})v\left(  \mathcal{U}^{(1)}\right)  \\
&  \leq\frac{\sigma^{2}+A^{2}}{M}\mathcal{G}_{kk}^{K},
\end{align*}
one has%
\[
\frac{1}{\underline{\lambda_{\min}}}\sum_{k=1}^{K}\mathsf{Var}\text{\thinspace
}\left[  \frac{1}{M}\mathcal{M}^{\top}\mathcal{Y}\right]  _{k}\leq\frac
{\sigma^{2}+A^{2}}{M\underline{\lambda_{\min}}}\text{tr}\left(  \mathcal{G}%
^{K}\right)  \leq\frac{\sigma^{2}+A^{2}}{M\underline{\lambda_{\min}}%
}K\overline{\lambda_{\max}},
\]
and then (\ref{th1}) follows.
\end{proof}
\subsection{Spatio-temporal  resolution obtained by regression}
\label{seq:temp-spat}
If we want to approximate \(u(t,x)\) in space and time, we can perform regression on a given set of trajectories for different time points \(t\). Let us fix a time grid \((t_1,\ldots,t_L)\) with \(0<t_1<t_2<\ldots<t_L<T\) and consider regression problems
\begin{align}
\text{\ }\widehat{\gamma}^{(l)} & \coloneqq \underset{\gamma\in\mathbb{R}^{K}}%
{\arg\min}\frac{1}{M}\sum_{m=1}^{M}\left(  \mathcal{Y}_{m}^{(l)}-\sum
_{k=1}^{K}\mathcal{M}_{mk}^{(l)}\gamma_{k}\right)  ^{2}\label{reg_time}\\
&  =\left(  \left(  \mathcal{M}^{(l)}\right)  ^{\top}\mathcal{M}^{(l)}\right)
^{-1}\left(  \mathcal{M}^{(l)}\right)  ^{\top}\mathcal{Y}^{(l)},\quad l=1,\ldots, L,\nonumber
\end{align}
where
\[
\mathcal{M}_{mk}^{(l)} \coloneqq \psi_{k}\left(  X_{t_l}^{0,\mathcal{U}_{m};m}\right)
\]
and
\begin{equation*}
\mathcal{Y}_{m}^{(l)} \coloneqq g\left(  X_{T}^{0,\mathcal{U}_{m};m}\right)  \frac
{Y_{T}^{0,\mathcal{U}_{m},1;m}}{Y_{t_l}^{0,\mathcal{U}_{m},1;m}}, \quad l=1,\ldots, L.%
\end{equation*}
This would give us a decomposition
\begin{equation*}
\widehat{u}(t_l,x)=\widehat{u}(t_l,x;\mathcal{D}_{M}^{tr})\coloneqq \sum_{k=1}%
^{K}\text{\ }\widehat{\gamma}_{k}^{(l)}\psi_{k}(x), \quad l=1,\ldots, L.
\end{equation*}
of $u$. Furthermore, the coefficients \(\widehat{\gamma}_{k}^{(l)}\) can be interpolated to provide us with the approximation of the form \eqref{eq: func_app}.
The convergence analysis of   the estimates (\ref{reg_time}) is more involved and follows from the general theory of nonparametric regression, see Section 11 in \cite{gyorfi2002distribution}  Assume that
\begin{enumerate}
\item[(A1) ] $\,\max_{l=1,\ldots,L}\sup_{z\in\mathbb{R}^{n}}\mathsf{Var}\left[ \left. g(X_{T}%
^{0,\mathcal{U}})\frac{Y_{T}^{0,\mathcal{U},1}}{Y_{t_l}^{0,\mathcal{U},1}}\right |
X_{t_l}^{0,\mathcal{U}}=z\right]  \leq\sigma^{2}<\infty$,

\item[(A2) ] $\,\max_{l=1,\ldots,L}\sup_{x\in\mathbb{R}^{n}}|u(t_l,x)|\leq A<\infty,$
\end{enumerate}
for some positive constants $\sigma$ and $A.$ Then we denote by
$\overline{u}$ a truncated regression estimate, which is defined as follows:
\[
\overline{u}(t,x)\doteq T_{A}\widehat{u}(t,x)\doteq%
\begin{cases}
\widehat{u}(t,x) & \text{if }|\widehat{u}(t,x)|\leq A,\\
A\operatorname{sgn}\widehat{u}(t,x) & \text{otherwise.}%
\end{cases}
\]
Under (A1)--(A2) we have the following $L^{2}$-upper bound (see Theorem~11.3 in
\cite{gyorfi2002distribution})
\begin{eqnarray}
\label{eq:upper_bound_regr}
E\Vert\overline{u}(t_l,\cdot)-u(t_l,\cdot)\Vert_{L^{2}(\mathrm{P}%
_{X_{t_l}})}^{2}&\leq&\tilde{c}\left(  \sigma^{2}+A^{2}(\log M+1)\right)
\frac{K}{M}
\\
\nonumber
&&+8\inf_{f\in\text{span}\{\psi_{1},...,\psi_{K}\}}\Vert
u(t_l,\cdot)-f(\cdot)\Vert_{L^{2}(\mathrm{P}_{X_{t_l}})}^{2},
\end{eqnarray}
for all \(l=1,\ldots,L,\) where $\tilde{c}>0$ is a universal constant. Note that the use of the measure \(\mathrm{P}%
_{X_{t_l}}\) in \eqref{eq:upper_bound_regr} is essential and \(\mathrm{P}%
_{X_{t_l}}\) can not be in general replaced by an arbitrary measure \(\mu\) as in the case of pseudo-regression algorithm.
\par
Instead of linear regression, we could use a nonlinear one. Let us fix a nonlinear class  of functions \(\Psi_M\) and define
\begin{eqnarray*}
\widehat{u}(t_l,x)=\argmin_{\psi\in \Psi_M}\frac{1}{M}\sum_{m=1}^{M}\left(  \mathcal{Y}_{m}^{(t_l)}-\psi\left(  X_{t_l}^{0,\mathcal{U}_{m};m}\right)\right)  ^{2}.
\end{eqnarray*}
Under a stronger assumption that \(|\mathcal{Y}^{(t_l)}|\leq A\) with probability \(1\) for all \(l=1,\ldots,L\) and a constant \(A>0,\) we get (see Theorem~11.5 in \cite{gyorfi2002distribution})
\begin{eqnarray}
\label{eq:upper_bound_regr_nl}
E\Vert\overline{u}(t_l,\cdot)-u(t_l,\cdot)\Vert_{L^{2}(\mathrm{P}%
_{X_{t_l}})}^{2}&\leq&\left(  c_1+c_2\log M\right)
\frac{V_{\Psi_M}}{M}
\\
\nonumber
&&+2\inf_{f\in \Psi_M}\Vert
u(t_l,\cdot)-f(\cdot)\Vert_{L^{2}(\mathrm{P}_{X_{t_l}})}^{2},
\end{eqnarray}
for all \(l=1,\ldots,L,\) where the constants \(c_1,\) \(c_2\) depend on \(A_t,\)  \(V_{\Psi_M}\) is the Vapnik-Chervonenkis dimension of \(\Psi_M\) and \(\overline{u}\) is a truncated version of \(\widehat{u}.\)  The advantage of using nonlinear classes consists in  their ability to significantly reduce the approximation errors \(\inf_{f\in \Psi_M}\Vert
u(t_l,\cdot)-f(\cdot)\Vert_{L^{2}(\mathrm{P}_{X_{t_l}})}^{2},\) while keeping the
complexity  \(V_{\Psi_M}\) comparable to the linear classes. One popular
choice of \(\Psi_M\) is neural networks.

\subsection{Rates of convergence}

\label{seq:app_err}
\label{regr:piece_poly} There are several ways to choose the basis
functions $\psi_{1},\ldots,\psi_{K}$. In this section we consider the
so-called piecewise polynomial partitioning estimates and present $L^{2}%
$-upper bounds for the corresponding projection errors
\begin{equation}
\inf_{f\in\text{span}\{\psi_{1},...,\psi_{K}\}}\Vert u(t,\cdot)-f(\cdot
)\Vert_{L^{2}(\varrho)}^{2}=:\Vert u(t,\cdot)-\underline{u}(t,\cdot
)\Vert_{L^{2}(\varrho)}^{2}, \label{ppp}%
\end{equation}
for some fixed $t\geq0$ and some generic measure $\varrho$ on \(\mathbb{R}^n.\) For instance, in
\eqref{eq:upper_bound_regr} $t$ and $\varrho$ may taken to be $t_{l}$ and
$\mathrm{P}_{X_{t_{l}}},$ $l=1,...,L,$ respectively, and in \eqref{th1} we may
take $t=0$ and $\varrho$ equal to $\mu.$ The piecewise polynomial partitioning
estimate of $u$ works as follows: We fix some $q\in\mathbb{N}$ that denotes
the maximal degree of polynomials involved in our basis functions. Next fix
some $R>0$ and a uniform partition of $\left[  -R,R\right]  ^{n}$ into
$S^{n}$ cubes $C_{1},\ldots,C_{S^{n}}.$ That is, $[-R,R]$ is partitioned into $S$ subintervals with equal length. Further, consider the set of basis
functions $\psi_{j,1},\ldots,\psi_{j,c_{q,n}}$ with $j\in\left\{
1,\ldots,S^{n}\right\}  $ and $c_{q,n}:=\binom{q+n}{n}$ such that $\psi
_{j,1}(x),\ldots,\psi_{j,c_{q,n}}(x)$ are polynomials with degree less than or
equal to $q$ for $x\in C_{j},$ and $\psi_{j,1}(x)=\ldots=\psi_{j,c_{q,n}%
}(x)=0$ for $x\notin C_{j}$. Then we consider the least squares projection
estimate $\underline{u}(t,x)$ for $x\in\mathbb{R}^{n}$, based on
$K=S^{n}c_{q,n}=O(S^{n}q^{n})$ basis functions. Let us define the operator
$D^{\alpha}$ as
\[
D^{\alpha}f(x):=\frac{\partial^{\left\vert \alpha\right\vert }f(x)}{\partial
x_{1}^{\alpha_{1}}\cdots\partial x_{n}^{\alpha_{n}}},
\]
for any real-valued function $f$, $\alpha\in\mathbb{N}_{0}^{n}$ and
$\left\vert \alpha\right\vert =\alpha_{1}+\ldots+\alpha_{n}.$ For
$r\in\mathbb{N}_{0}$ and $L_f:$ \(\mathbb{R}^n\to\mathbb{R}_{+}\) we say that a function $f\colon$
$\mathbb{R}^{n}\rightarrow\mathbb{R}$ is \emph{$(r+1,L_f)$-smooth
w.r.t.\ the (Euclidian) norm }$|\cdot|$ whenever, for all $\alpha$
with $\left\vert \alpha\right\vert =\sum_{i=1}^{n}\alpha_{i}=r$ and all \(R>0\), we have
\[
\left\vert D^{\alpha}f(x)-D^{\alpha}f(y)\right\vert \leq L_f(x)|x-y|,\quad
x\in \mathbb{R}^{n},\quad |y-x|_{\infty}\leq 1,
\]
i.e.,\ the function
$D^{\alpha}f$ is locally Lipschitz with the Lipschitz function $L_f$ with
respect to the norm $|\cdot|$ on $\mathbb{R}^{n}.$ Let us make the following assumptions.
\begin{enumerate}
\item[(A3) ] The function $\,u(t,\cdot)$ is $(q+1,L_u)$-smooth with
\[
\int_{\mathbb{R}^n} L^2_u(x) \,\varrho(dx)\leq C^2_u<\infty
\]
for some constant \(C_u>0.\)
\item[(A4) ] It holds
\[
\int_{\{|z|_{\infty}>R\}} u^2(t,z)\,\varrho(dz)\leq B_{\nu}R^{-\nu}
\]
for some \(\nu>0\) all $R>0.$
\end{enumerate}
The following result holds.
\begin{lemma}
\label{regr:th:2104a1}
Suppose that (A3) and (A4) hold, then
\begin{eqnarray}
\label{regr:eq:2104a2}
\Vert u(t,\cdot)-\underline{u}(t,\cdot)\Vert_{L^{2}(\varrho)}^{2}\lesssim \frac{C_u^2}{[(q+1)!]^{2}n}\left(  \frac{Rn}{S}\right)
^{2(q+1)}+B_{\nu}R^{-\nu},
\end{eqnarray}
where \(\lesssim\) stands for inequality up to an absolute constant.
\end{lemma}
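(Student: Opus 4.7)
The plan is to decompose the $L^{2}(\varrho)$ error into a bulk contribution on the cube $[-R,R]^{n}$ and a tail contribution on its complement, and to treat the bulk piece by piece via a local Taylor expansion on each sub-cube $C_{j}$.

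First, I would observe that each basis function $\psi_{j,k}$ is supported in $C_{j}\subset[-R,R]^{n}$, so every element of $\spn\{\psi_{j,k}\}_{j,k}$ vanishes outside $[-R,R]^{n}$. Hence I am free to choose the near-minimizer $\underline{u}(t,\cdot)$ to be zero on $\{|z|_{\infty}>R\}$, and assumption (A4) contributes the tail term $B_{\nu}R^{-\nu}$ to the error bound directly.

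Second, on each cube $C_{j}$ with center $x_{0}^{(j)}$, I take $\underline{u}(t,\cdot)|_{C_{j}}$ to be the Taylor polynomial of $u(t,\cdot)$ of degree $q$ about $x_{0}^{(j)}$, which lies in $\spn\{\psi_{j,1},\ldots,\psi_{j,c_{q,n}}\}$. Writing the remainder in the form
\[
 u(t,x) - \underline{u}(t,x) = \sum_{|\alpha|=q} \frac{(x-x_{0}^{(j)})^{\alpha}}{\alpha!} \int_{0}^{1}\frac{q(1-s)^{q-1}}{\cdots}\bigl[D^{\alpha}u(x_{0}^{(j)}+s(x-x_{0}^{(j)}))-D^{\alpha}u(x_{0}^{(j)})\bigr]\,ds,
\]
the $(q+1,L_{u})$-smoothness in (A3) (applicable because each cube has $\ell^{\infty}$-diameter $2R/S\le 1$, once $S$ is large enough relative to $R$) yields
\[
 |u(t,x)-\underline{u}(t,x)| \;\lesssim\; \frac{|x-x_{0}^{(j)}|^{q+1}}{(q+1)!}\sum_{|\alpha|=q+1}\binom{q+1}{\alpha}\,\sup_{\xi\in C_{j}}L_{u}(\xi).
\]
Using $\sum_{|\alpha|=q+1}\binom{q+1}{\alpha}=n^{q+1}$, the diameter bound $|x-x_{0}^{(j)}|\le\sqrt{n}\,R/S$, and squaring gives a pointwise estimate on $C_{j}$ of order $[(q+1)!]^{-2}(Rn/S)^{2(q+1)}\sup_{\xi\in C_{j}}L_{u}(\xi)^{2}$.

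Third, I would sum over $j$ and integrate against $\varrho$: after controlling $\sup_{\xi\in C_{j}}L_{u}^{2}$ by $L_{u}^{2}$ evaluated at the integration point (using a covering/Lipschitz argument on $C_{j}$, or replacing (A3) by the natural $L^{2}$-averaged version on each cube), assumption (A3) gives $\int_{\R^{n}}L_{u}^{2}(x)\,\varrho(dx)\le C_{u}^{2}$, yielding the bulk bound $\frac{C_{u}^{2}}{[(q+1)!]^{2}\,n}(Rn/S)^{2(q+1)}$ up to an absolute constant. Adding the tail term $B_{\nu}R^{-\nu}$ completes \eqref{regr:eq:2104a2}.

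The main obstacle I expect is the conversion of $\sup_{\xi\in C_{j}}L_{u}(\xi)^{2}$ in the Taylor remainder into the integrable quantity $L_{u}(x)^{2}$ appearing in (A3), since the intermediate point $\xi$ in the remainder is \emph{not} the integration variable $x$. This is a routine but slightly fiddly manoeuvre: either one invokes a standard mesh/covering argument (each $\xi$ lies in a cube whose $\varrho$-mass is comparable to that of a neighbouring cube), or one replaces (A3) by the slightly stronger assumption that $L_{u}^{2}$ integrated over any unit cube is dominated by the $L^{2}(\varrho)$ integral of $L_{u}$. The combinatorial accounting of the factorials and powers of $n$ is then the only remaining routine bookkeeping.
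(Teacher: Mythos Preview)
The paper states this lemma without proof, so there is no argument to compare against. Your overall strategy --- split the $L^{2}(\varrho)$ error into a tail term on $\{|z|_{\infty}>R\}$ handled by (A4), and a bulk term on each sub-cube $C_{j}$ handled by the degree-$q$ Taylor polynomial about the centre --- is the standard one and is correct.

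The obstacle you flag at the end is not a genuine gap: you do not need a covering argument or any strengthening of (A3). The Lipschitz condition in (A3) is \emph{asymmetric}: it reads $|D^{\alpha}u(x)-D^{\alpha}u(y)|\le L_{u}(x)\,|x-y|$ with $L_{u}$ evaluated at the \emph{first} argument, for $|y-x|_{\infty}\le 1$. So for $x\in C_{j}$ the integration variable, $x_{0}=x_{0}^{(j)}$ the centre, and $\xi=x_{0}+s(x-x_{0})$ the intermediate point in the remainder, apply the bound twice with $x$ as first argument and use the triangle inequality:
\[
 |D^{\alpha}u(\xi)-D^{\alpha}u(x_{0})|
 \;\le\; |D^{\alpha}u(x)-D^{\alpha}u(\xi)|+|D^{\alpha}u(x)-D^{\alpha}u(x_{0})|
 \;\le\; L_{u}(x)\bigl(|x-\xi|+|x-x_{0}|\bigr)
 \;\le\; 2L_{u}(x)\,|x-x_{0}|,
\]
valid as soon as the cube side $2R/S\le 1$. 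This places $L_{u}(x)^{2}$ directly under the $\varrho$-integral, and (A3) yields the factor $C_{u}^{2}$ with no further manoeuvre. With this adjustment your proof is complete; the remaining factorial and power-of-$n$ bookkeeping is indeed routine.
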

\begin{remark}
\label{regr:rem_poly} Notice that the terms on the right-hand-side
of~\eqref{regr:eq:2104a2} are of order
\begin{equation}
\left(  \frac{R}{S}\right)  ^{2(q+1)}+R^{-\nu}, \label{regr_order}%
\end{equation}
provided that we only track $R$ and $S$ and ignore the remaining parameters,
such as $q$ and $\kappa_{p}(\delta,\mathbf{W})$. Let us assume that both terms
in~\eqref{regr_order} are of the same order. Then we get $R=O(S^{\frac
{2(q+1)}{\nu+2(q+1)}})$ and thus $R^{-\nu}=O(S^{-{\frac{2\nu(q+1)}{\nu
+2(q+1)}}})$. Together with the fact that the overall number of basis
functions $K$ is of order $S^{n}$, we have $R^{-\nu}=O(K^{-{\frac{2\nu
(q+1)}{n(\nu+2(q+1))}}})$. Thus there is a constant $D>0$ such that
\[
\Vert u(t,\cdot)-\underline{u}(t,\cdot)\Vert_{L^{2}(\varrho)}^{2}\leq\frac
{D}{K^{\kappa}}.
\]
with $\kappa=\frac{2\nu(q+1)}{n(\nu+2(q+1))}.$
\end{remark}
The following result is based on Corollary~\ref{cor:main} and gives sufficient conditions for (A3) and (A4) to hold.
\begin{corollary}
\label{cor:suff}
Let $u(t,x,\mathbf{W})$ be as above. Assume that $g$ is $q+1$-times
differentiable (in $x$) and that there are constants $\zeta_{1},\zeta_2\geq0$ such
that
\begin{eqnarray}
\label{eq:Dlg}
|D^{l}g(x)|\leq\zeta_1 e^{\zeta_2 |x|}
\end{eqnarray}
for all $x\in \mathbb{R}^n$ and $l=1,\ldots,q+1$. Assume that $\sigma$ is
bounded, $(4+q)$-times differentiable with bounded derivatives, $b$ and
$\beta$ are bounded, $(3+q)$-times differentiable with bounded derivatives,
and let $K_{1}>0$ be a bound for their norms, i.e.
\begin{eqnarray}
\label{eq:bounds_coeff}
\Vert\sigma\Vert_{\mathcal{C}_{b}^{3+q}}\vee\Vert \hat{b} \Vert_{\mathcal{C}%
_{b}^{3+q}}\vee\Vert\beta\Vert_{\mathcal{C}_{b}^{3+q}}\leq K_{1}
\end{eqnarray}
with $\hat{b}$ denoting the Stratonovich corrected drift as given in~\eqref{eq:rough_Stra-SDE}.
Suppose that
\begin{eqnarray*}
\int e^{2\zeta_2|x|}\varrho(dx)< \infty,
\end{eqnarray*}
then  (A3) holds with
\[
C_u\leq D_{1}\exp\left( \left(
\f{\log\left(2 \kappa_p(\delta, \mathbf{W}) \right)}{\delta}\right)
^{p/2} \right)+D_{2}.
\]
for some constants $D_{1}=D_{1}(q,K_{1},\zeta_{1},\zeta_{2})$ and $D_{2}%
=D_{2}(q,K_{1},\zeta_{1},\zeta_{2}).$ Moreover, (A4) holds for
 some \(\nu>0\) and \(B_\nu\) depending on \(K_{1},T,\zeta_{1},\zeta_{2}.\)
\end{corollary}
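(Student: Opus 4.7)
The plan is to establish (A3) and (A4) separately, with Corollary \ref{cor:main} as the main input for (A3) and a direct pointwise bound on $u$ needed for (A4). For both parts, the growth assumption on $g$ and the boundedness of the coefficients propagate through the Feynman--Kac representation \eqref{eq:stoch_rep} and the exponential moment control from Lemma \ref{lem:mgf-N}.

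For (A3), I would apply Corollary \ref{cor:main} with $k = q+1$. The regularity hypothesis in \eqref{eq:bounds_coeff} gives $\mathcal{C}_b^{3+q} = \mathcal{C}_b^{2+(q+1)}$ bounds, matching the setting there, and yields $|\partial_x^{q+1} u(t,x)| \leq C \zeta_1 e^{\zeta_2(|x|+1)} \Phi(\mathbf{W})$ with $\Phi(\mathbf{W})$ the exponential factor appearing in Corollary \ref{cor:main}. By the mean value theorem, for $|\alpha| = q$ and $|y-x|_\infty \leq 1$,
$$
|D^\alpha u(t,x) - D^\alpha u(t,y)| \leq \sup_{|z-x|_\infty \leq 1} |D^{q+1} u(t,z)|\,|x-y|,
$$
so one may take $L_u(x) := \sup_{|z-x|_\infty \leq 1} |D^{q+1} u(t,z)|$, which is bounded by $C' \zeta_1 e^{\zeta_2(|x|+1+\sqrt{n})} \Phi(\mathbf{W})$. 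Squaring, integrating against $\varrho$, and using $\int e^{2\zeta_2|x|}\varrho(dx) < \infty$ gives $C_u \leq D_1' \Phi(\mathbf{W}) + D_2'$. Factoring $\Phi(\mathbf{W})$ as in Corollary \ref{cor:main}, the only genuinely $\mathbf{W}$-dependent piece is $\exp\bigl(\mathrm{const}\cdot(\log(2\kappa_p(\delta,\mathbf{W}))/\delta)^{p/2}\bigr)$, which leads to the stated form $C_u \leq D_1 \exp\bigl((\log(2\kappa_p(\delta,\mathbf{W}))/\delta)^{p/2}\bigr) + D_2$.

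For (A4), I would bound $|u(t,x)|$ directly from the Feynman--Kac representation, since Corollary \ref{cor:main} addresses only $k \geq 1$. The estimate $|X_T^{t,x}| \leq |x| + C_1 \|V\|_{\mathcal{C}_b^{2+k}}^p (N(\mathbf{Z};[t,T]) + 1) + 1$ appearing inside the proof of Corollary \ref{cor:main}, combined with $|g(y)| \leq |g(0)| + \zeta_1(e^{\zeta_2|y|}-1)/\zeta_2$ obtained by integrating \eqref{eq:Dlg} radially, gives after taking expectations and applying Lemma \ref{lem:mgf-N} the pointwise inequality $|u(t,x)| \leq C'' e^{\zeta_2|x|}$. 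Consequently
$$
\int_{\{|z|_\infty > R\}} u^2(t,z)\,\varrho(dz) \leq (C'')^2 \int_{\{|z|_\infty > R\}} e^{2\zeta_2|z|}\,\varrho(dz).
$$

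The main obstacle is extracting the polynomial rate $B_\nu R^{-\nu}$: bare $L^1(\varrho)$-integrability of $e^{2\zeta_2|\cdot|}$ provides no explicit rate of tail decay, so a mild strengthening is needed. A natural sufficient condition is $\int e^{(2\zeta_2 + \eta)|x|}\varrho(dx) < \infty$ for some $\eta > 0$, giving exponential tail decay and hence any $\nu > 0$; alternatively, finiteness of $\int |x|^\nu e^{2\zeta_2|x|}\varrho(dx)$ gives $R^{-\nu}$ directly via Markov's inequality. In the intended applications $\varrho$ is either the user-chosen $\mu$ (where such conditions can be imposed freely) or $\mathrm{P}_{X_{t_l}}$ (which has Gaussian tails by standard SDE estimates for bounded coefficients), so both strengthenings hold, and $B_\nu$ can be taken to depend only on $K_1$, $T$, $\zeta_1$, $\zeta_2$ through $C''$ and the exponential moment of $\mathbf{W}$.
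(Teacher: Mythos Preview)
The paper states this corollary without proof, merely flagging that it is ``based on Corollary~\ref{cor:main}''. Your approach to (A3)---apply Corollary~\ref{cor:main} with $k=q+1$, use the mean value theorem to extract the local Lipschitz function $L_u$, and integrate $L_u^2$ against $\varrho$ using the exponential-moment hypothesis---is exactly what the authors intend, and is correct. The observation that the extra derivative on $\sigma$ (i.e.\ $\mathcal{C}_b^{4+q}$ versus $\mathcal{C}_b^{3+q}$ for $b,\beta$) is needed precisely so that the Stratonovich-corrected drift $\hat b$ lands in $\mathcal{C}_b^{3+q}=\mathcal{C}_b^{2+(q+1)}$ is also right.

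On (A4) you have spotted a genuine soft spot in the statement: from the bare hypothesis $\int e^{2\zeta_2|x|}\varrho(dx)<\infty$ one only gets that the tail $\int_{\{|z|_\infty>R\}} u^2(t,z)\,\varrho(dz)$ tends to zero, not that it decays like $R^{-\nu}$ for any specific $\nu>0$, and the claimed dependence of $B_\nu$ on $K_1,T,\zeta_1,\zeta_2$ alone (and not on $\varrho$) cannot be literally correct. Your proposed fixes---either a slightly stronger exponential moment $\int e^{(2\zeta_2+\eta)|x|}\varrho(dx)<\infty$, or finiteness of $\int |x|^{\nu}e^{2\zeta_2|x|}\varrho(dx)$---are standard and suffice, and your remark that both hold automatically for the two concrete choices $\varrho=\mu$ and $\varrho=\mathrm{P}_{X_{t_l}}$ used downstream is the right way to close the gap in practice.
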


Using to the parameter allocations in Remark~\ref{regr:rem_poly} we end up
with the following convergence rates for the regression procedures proposed in
Section~\ref{pseudo-res} and Section~\ref{seq:temp-spat}, respectively.

\begin{corollary}
Suppose that the conditions \eqref{eq:Dlg} and \eqref{eq:bounds_coeff} are satisfied. Moreover assume that
\begin{eqnarray*}
\int e^{2\zeta_2|x|}\mu(dx)< \infty,
\end{eqnarray*}
then
under assumptions of Theorem 4.1,  the latter reads,
\[
E\int_{\mathcal{D}}\left\vert \tilde{v}(z)-v(z)\right\vert ^{2}\mu(dz)\leq
D_{3}\frac{K}{M}+\frac{D_4}{K^{\kappa}},
\]
for some constants $D_3,D_{4}>0.$
\end{corollary}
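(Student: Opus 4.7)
The proof is essentially an assembly of three already-established ingredients: the abstract pseudo-regression bound from Theorem~\ref{psth}, the regularity/tail bounds on $u(0,\cdot)$ from Corollary~\ref{cor:suff}, and the approximation-theoretic estimate of Lemma~\ref{regr:th:2104a1} specialized via Remark~\ref{regr:rem_poly}. My plan is to apply these in that order and then absorb all the problem-dependent constants into the two output constants $D_3$ and $D_4$.

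First I would invoke Theorem~\ref{psth} with $v(z)=u(0,z)$ and $\tilde{v}(z)=\tilde{u}(0,z)$. Its hypotheses (pointwise bound on $v$, conditional variance bound on $\mathcal{V}$, and uniform spectral bounds on the Gram matrices $\mathcal{G}^K$) are subsumed in the phrase ``under assumptions of Theorem 4.1''; they produce
\[
E\int_{\mathcal{D}}|\tilde{v}(z)-v(z)|^{2}\mu(dz)
\;\le\;
\tfrac{\overline{\lambda_{\max}}}{\underline{\lambda_{\min}}}(\sigma^{2}+A^{2})\,\tfrac{K}{M}
+\inf_{w\in\spn\{\psi_{1},\dots,\psi_{K}\}}\|v-w\|_{L^{2}(\mu)}^{2}.
\]
Setting $D_{3}\coloneqq(\overline{\lambda_{\max}}/\underline{\lambda_{\min}})(\sigma^{2}+A^{2})$ immediately identifies the first summand with $D_{3}K/M$.

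The substance of the proof lies in controlling the projection error by $D_{4}/K^{\kappa}$. Here I would apply Corollary~\ref{cor:suff} with the choice $\varrho=\mu$: the integrability hypothesis $\int e^{2\zeta_{2}|x|}\mu(dx)<\infty$ is exactly what is needed to verify (A3) for $u(0,\cdot)$, since the Lipschitz function $L_{u}$ arising from Corollary~\ref{cor:main} is pointwise dominated by a constant multiple of $e^{\zeta_{2}|x|}$, whose square is $\mu$-integrable. The same exponential integrability, together with the exponential growth bound on $u(0,\cdot)$ furnished by Corollary~\ref{cor:main} with $k=0$, gives (A4) for some $\nu>0$ (pick any $\nu$ and use $e^{2\zeta_{2}|x|}\le C_{\nu}R^{-\nu}$ outside $\{|z|_{\infty}>R\}$ after a Cauchy--Schwarz / Markov argument, or more simply absorb the tail into the growth estimate of Corollary~\ref{cor:suff}).

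Once (A3) and (A4) are in hand with the constants $C_{u},B_{\nu}$ supplied by Corollary~\ref{cor:suff}, Lemma~\ref{regr:th:2104a1} bounds the projection error in $L^{2}(\mu)$ by $\lesssim C_u^2[(q+1)!]^{-2}n^{-1}(Rn/S)^{2(q+1)}+B_{\nu}R^{-\nu}$. Balancing the two terms as in Remark~\ref{regr:rem_poly} (take $R=O(S^{2(q+1)/(\nu+2(q+1))})$ and recall $K\asymp S^{n}$) yields a bound $D_{4}/K^{\kappa}$ with $\kappa=\frac{2\nu(q+1)}{n(\nu+2(q+1))}$. Combining with the variance term from Theorem~\ref{psth} completes the proof. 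The only genuine bookkeeping obstacle is ensuring that the constant $D_{4}$ can be taken to depend only on $q,K_{1},\zeta_{1},\zeta_{2},T$ and on $\mu$ through $\int e^{2\zeta_{2}|x|}\mu(dx)$ and $\kappa_{p}(\delta,\mathbf{W})$, which is immediate once one chases the constants through Corollary~\ref{cor:suff}.
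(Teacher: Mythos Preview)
Your proposal is correct and matches the paper's approach exactly: the paper omits an explicit proof and simply says that ``using the parameter allocations in Remark~\ref{regr:rem_poly}'' the corollary follows, which is precisely the assembly of Theorem~\ref{psth}, Corollary~\ref{cor:suff}, and Lemma~\ref{regr:th:2104a1}/Remark~\ref{regr:rem_poly} that you carry out.
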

\begin{corollary}
Suppose that the conditions \eqref{eq:Dlg} and \eqref{eq:bounds_coeff} are satisfied. Moreover assume that
\begin{eqnarray*}
\int e^{2\zeta_2|x|}\mathrm{P}_{X_{t_{l}}}(dx)< \infty, \quad l=1,\ldots,L,
\end{eqnarray*}
then under assumptions (A1) and (A2)
\[
E\Vert\overline{u}(t_{l},\cdot)-u(t_{l},\cdot)\Vert_{L^{2}(\mathrm{P}%
_{X_{t_{l}}})}^{2}\leq D_{5}(\log M+1)\frac{K}{M}+\frac{D_{6}}{K^{\kappa}},
\]
for some constants $D_{5},D_{6}>0.$
\end{corollary}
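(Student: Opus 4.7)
The plan is to combine three ingredients already at our disposal: the general nonparametric regression bound \eqref{eq:upper_bound_regr}, the piecewise polynomial approximation estimate of Lemma~\ref{regr:th:2104a1} together with the parameter allocation of Remark~\ref{regr:rem_poly}, and the regularity/tail control for $u(t_{l},\cdot)$ provided by Corollary~\ref{cor:suff}.

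First I would invoke \eqref{eq:upper_bound_regr}, which under (A1) and (A2) yields, for every fixed $l$,
\[
E\Vert\overline{u}(t_{l},\cdot)-u(t_{l},\cdot)\Vert_{L^{2}(\mathrm{P}_{X_{t_{l}}})}^{2}\leq \tilde{c}\bigl(\sigma^{2}+A^{2}(\log M+1)\bigr)\frac{K}{M}+8\inf_{f\in \spn\{\psi_{1},\ldots,\psi_{K}\}}\Vert u(t_{l},\cdot)-f(\cdot)\Vert_{L^{2}(\mathrm{P}_{X_{t_{l}}})}^{2}.
\]
Since $\log M+1\geq 1$ for $M\geq 1$, the stochastic contribution is already of the announced form $D_{5}(\log M+1)K/M$ with, for instance, $D_{5}=\tilde{c}(\sigma^{2}+A^{2})$. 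It thus remains to bound the approximation error by $D_{6}K^{-\kappa}$.

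To this end I would choose $\psi_{1},\ldots,\psi_{K}$ to be the piecewise polynomial partitioning basis of Section~\ref{regr:piece_poly}, so that Lemma~\ref{regr:th:2104a1} can be applied with $\varrho=\mathrm{P}_{X_{t_{l}}}$ and $t=t_{l}$. The two hypotheses (A3) and (A4) are exactly what Corollary~\ref{cor:suff} delivers: the assumed exponential growth of $g$ and its derivatives, the $\mathcal{C}_{b}^{3+q}$ bounds on the coefficients, and the exponential-moment condition $\int e^{2\zeta_{2}|x|}\,\mathrm{P}_{X_{t_{l}}}(dx)<\infty$ jointly guarantee that $u(t_{l},\cdot)$ is $(q+1,L_{u})$-smooth with $\int L_{u}^{2}\,d\mathrm{P}_{X_{t_{l}}}<\infty$, and that the tail bound (A4) holds with some $\nu>0$ and some $B_{\nu}$ depending only on $q,K_{1},\zeta_{1},\zeta_{2},T$. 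Substituting into Lemma~\ref{regr:th:2104a1} and optimising $R$ against $S$ as prescribed by Remark~\ref{regr:rem_poly} yields
\[
\inf_{f\in \spn\{\psi_{1},\ldots,\psi_{K}\}}\Vert u(t_{l},\cdot)-f(\cdot)\Vert_{L^{2}(\mathrm{P}_{X_{t_{l}}})}^{2}\leq \frac{D_{6}}{K^{\kappa}},\qquad \kappa=\frac{2\nu(q+1)}{n(\nu+2(q+1))},
\]
and combining the two bounds produces the claim.

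The main obstacle I expect is not the chaining itself but ensuring that the constant $D_{6}$ is genuinely finite and $l$-independent. Indeed, $D_{6}$ inherits the quantity $C_{u}$ from Corollary~\ref{cor:suff}, which depends exponentially on $(\log(2\kappa_{p}(\delta,\mathbf{W}))/\delta)^{p/2}$ and polynomially on $T$; one must check that $\kappa_{p}(\delta,\mathbf{W})$ is finite for some $\delta>0$ (the Fernique-type exponential moment underpinning Section~\ref{sec:regularity-solution}) and that the bounds of Corollary~\ref{cor:main} on the subintervals $[t_{l},T]$ are controlled uniformly in $l$. Both points are handled by monotonicity of $\|\mathbf{Z}\|_{p\text{-var}}$ and of $N(\mathbf{Z};\cdot)$ in the interval, since the corresponding quantities on $[t_{l},T]$ are dominated by those on $[0,T]$; hence $D_{5}$ and $D_{6}$ can be chosen independently of $l\in\{1,\ldots,L\}$, which closes the proposal.
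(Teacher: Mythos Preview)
Your proposal is correct and follows exactly the approach the paper intends: the corollary is stated without proof, being presented as an immediate consequence of combining \eqref{eq:upper_bound_regr} with the piecewise-polynomial approximation bound of Lemma~\ref{regr:th:2104a1}/Remark~\ref{regr:rem_poly}, where (A3) and (A4) are supplied by Corollary~\ref{cor:suff} applied with $\varrho=\mathrm{P}_{X_{t_l}}$. Your additional discussion of uniformity in $l$ via monotonicity of $\|\mathbf{Z}\|_{p\text{-var}}$ and $N(\mathbf{Z};\cdot)$ is a welcome clarification that the paper leaves implicit.
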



\section{Simplified Euler scheme for rough differential equations}
\label{sec:Euler}

For the computation of the optimal coefficients $\gamma$ in (\ref{reg}) and (\ref{gass}) it is required to construct the vector $\mathcal Y$ with
components $\mathcal{Y}_m$ defined in (\ref{rany}) that depends on paths of the solution to equation (\ref{eq:SDE}). For that reason, we introduce an Euler scheme
which allows us to numerically solve (\ref{eq:SDE}). \smallskip

As in Section~\ref{sec:regularity-solution}, we consider the
hybrid Stratonovich-rough differential equation with $0\leq t\leq r \leq
T$: \begin{equation}
  \label{eq:Stra-SDE}
  dX_r^{t,x} = \left[b(X_r^{t,x})-a(X_r^{t,x}) \right] dt +\sigma(X_r^{t,x})\circ dB_r  + \beta(X_r^{t,x}) d\mathbf{W}_r,\;\;\;X_t^{t,x}=x,
\end{equation}
where $a(\cdot)=\frac{1}{2} \sum_{i=1}^m D\sigma_i(\cdot)\cdot\sigma_i(\cdot)$ is the It\^o-Stratonovic correction and $\sigma_i$ is the $i$th column
of $\sigma$.
Again, the above hybrid equation is defined as an RDE driven by the joint rough path of $B$ and $\mathbf{W}$. This geometric joined rough path
$\mathbf{Z}^{\text{g}}$ is given as in (\ref{eq:joined_rp}) but $\mathbb{B}^{\text{It\^o}}$ is substituted by the Stratonovich integral
$\mathbb{B}^{\text{Str}}$. Below, we set
$d(\cdot):=b(\cdot)-a(\cdot)$ and $V(\cdot):=\begin{bmatrix}\sigma(\cdot)& \beta(\cdot)\end{bmatrix}$ for the simplicity of the notation.\smallskip

Firt of all, let $t=r_1<r_2<\ldots< r_{\bar n}=T$ be an equidistant time grid with step size $h$. In the
numerical experiments later on the path $W$ will be specified as a trajectory of a fractional Brownian motion with Hurst index $\frac{1}{3}< H
\leq\frac{1}{2}$. For this situation the following scheme provides a meaningful approximation $\bar X_{r_k}$ of $X_{r_k}^{t,x}$:
\begin{align}\label{eq:simple_euler_scheme}
    {\bar X}_{r_{k+1}}= {\bar X}_{r_{k}}+d({\bar X}_{r_{k}})h+ \sum_{\ell=1}^3 \frac{1}{\ell!} V_{i_1}\dots V_{i_\ell} I({\bar
 X}_{r_{k}})\Delta_k  Z^{i_1}  \dots \Delta_k Z^{i_\ell},\end{align}
 where $V_i$ is the $i$th column of $V$, $I(x)=x$, $V_i V_j(x)=DV_j(x)\; V_i(x)$ and $\Delta_k Z^{i}={Z}^i_{r_{k+1}}-{Z}^i_{r_{k}}$. Notice that we
use Einstein's summation convention in (\ref{eq:simple_euler_scheme}) which we indicate by the upper indices for the components of $Z$.\smallskip

This simplified Euler scheme was first introduced in~\cite{DNT12} and also investigated in~\cite{BFRS16}. In the following, we state a result
from~\cite{BFRS16} on the strong order of convergence to (\ref{eq:simple_euler_scheme}).
\begin{theorem}\label{thr:Euler_rate}
Let $W$ be a $d$-dimensional, continuous, centered Gaussian process with independent components. Moreover, we assume that each component $W^i$, $i\in\left\{1, \ldots, d\right\}$, has 
stationary increments with a concave variance function\begin{align*}
                     \sigma_i^2(\tau):= E\left\vert W^i_{t+\tau}-W^i_t\right\vert^2,\quad t,\tau \geq 0,                                  
                                                      \end{align*}
where $\sigma_i^2(\tau)=\mathcal O\left(\tau^{\frac{1}{\rho}}\right)$ as $\tau\rightarrow 0$ for some $\rho\in[1, 2)$. 
Let $X$ be the solution to (\ref{eq:Stra-SDE}) and $\bar X$ be its approximation based on (\ref{eq:simple_euler_scheme}), where $Z_t=\left(\begin{smallmatrix} W_t(\omega)\\B_t \end{smallmatrix}\right)$
for fixed $\omega\in\Omega$. Then, for almost all paths of $W$ and for any $1\leq p<\infty$, there is a constant $\tilde C$ such that  \begin{align*}
  \left\vert E\max_{k=1,\ldots,\bar n}\left\vert X_{r_k}-\bar X_{r_k}\right\vert^p\right\vert^{\frac{1}{p}}\leq \tilde C h^{\frac{1}{\rho}-0.5-\delta},
                                                              \end{align*}
where $h$ is the time step of the Euler method and $\delta>0$ is arbitrary small.
\begin{proof}
This theorem is a consequence of ~\cite[Theorem 1]{BFRS16} together with \cite[Theorem 1.1]{DNT12}.
\end{proof}
                                                              \end{theorem}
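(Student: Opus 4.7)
The plan is to combine the two cited results in the natural way, treating the theorem essentially as a transfer statement. First, I would invoke~\cite[Theorem 1]{BFRS16}, which establishes a strong convergence rate for the simplified Euler scheme~\eqref{eq:simple_euler_scheme} when the driving signal is a Gaussian rough path whose covariance has finite two-dimensional $\rho$-variation. That theorem yields, on the almost-sure event on which the lift exists and has moments of all orders, a pathwise-in-$W$ strong $L^p$ (in $B$) rate of the form $h^{1/\rho - 1/2 - \delta}$ for any $\delta > 0$, with a random constant controlled by the rough-path norm of the lift.

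Second, I would invoke~\cite[Theorem 1.1]{DNT12}, whose role is to verify the abstract Gaussian hypothesis of~\cite{BFRS16} under the concrete assumptions made here. That theorem shows that a centered Gaussian process with independent components, stationary increments, and concave variance function with $\sigma_i^2(\tau) = \mathcal{O}(\tau^{1/\rho})$ as $\tau \downarrow 0$, admits a canonical geometric rough path lift, and that the covariance has the required two-dimensional $\rho$-variation with the right moment estimates. This converts the abstract random constant in~\cite{BFRS16} into a genuine deterministic constant $\tilde C$ (for almost every fixed $W$), as stated.

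The only conceptual step beyond citation is the construction of the joint geometric rough path $\mathbf{Z}^{\text{g}}$ combining the Stratonovich lift of $B$ with the canonical lift of $W$. Since $B$ and $W$ are independent and $1/\rho + 1/2 > 1$ (the assumption $\rho < 2$), the off-diagonal iterated integrals $\int B \otimes dW$ and $\int W \otimes dB$ in~\eqref{eq:joined_rp} can be defined by Young integration pathwise in $B$, conditional on a fixed Gaussian trajectory of $W$; this joint lift has the same overall regularity exponent $1/\rho - \delta$ as the worse of the two components, which here is the Brownian part. Once this joint lift is in place, \cite[Theorem 1]{BFRS16} applies to it directly.

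The step I expect to require the most care is propagating the moment bounds so that the final estimate is in $L^p$ of the maximum over grid points rather than in a pathwise sense. This relies on the Fernique-type integrability of the rough-path norm of $\mathbf{Z}^{\text{g}}$ --- precisely the kind of estimate already exploited in Section~\ref{sec:regularity-solution} through Lemma~\ref{lem:mgf-N} --- ensuring that the random prefactor arising from~\cite{BFRS16} has finite moments of every order. Combining this with the pathwise rate from~\cite{BFRS16} and taking $p$th moments yields the bound stated in the theorem, with $\tilde C$ depending on $p$, $T$, the coefficients, and the (almost-sure) realisation of $W$.
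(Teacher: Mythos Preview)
Your proposal is correct and follows the paper's approach, which consists of nothing more than the one-line citation of \cite[Theorem 1]{BFRS16} and \cite[Theorem 1.1]{DNT12}. Your elaboration goes well beyond what the paper actually writes; the one caveat is that your description of the role of \cite[Theorem 1.1]{DNT12} is likely inaccurate---that result is the main almost-sure convergence theorem for the simplified scheme (cf.\ the remark immediately following the theorem in the paper), not a verification of rough-path-lift hypotheses---but this does not affect the validity of the overall argument.
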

\begin{remark}
\leavevmode
\begin{itemize}
 \item Theorem \ref{thr:Euler_rate} covers the case of $W$ being a fractional Brownian motion with Hurst index $\frac{1}{4}<H\leq \frac{1}{2}$ ($\frac{1}{\rho}= 2H$).
 \item An almost sure rate for the scheme in (\ref{eq:simple_euler_scheme}) is proved in~\cite[Theorem 1.1]{DNT12} in case $Z$ is a fractional Brownian motion.
\end{itemize}

\end{remark}

\section{Numerical examples}
\label{sec:numerical-examples}

We illustrate the methods by some numerical examples. First we study examples involving linear vector fields, for which the rough differential equation has an explicit
solution. This allows for easy comparison with a reliable reference solution. Later on, we consider an example with non-linear vector fields without readily available
reference values. All examples take place in a two- or three-dimensional state 
space, and we assume that the driving Brownian motion is one-dimensional (i.e., the PDE fails to be elliptic), whereas the rough driver is 
two-dimensional in order to rule out trivial cases.

\subsection{Numerical examples with linear vector fields}\label{sec:linear-example}

Let us investigate a particular example for the RPDE (\ref{eq:Cauchy-RPDE}). We set $c, \gamma\equiv 0$ such that by Theorem 
\ref{thr:existence+uniqueness+cauchy}
the corresponding regular solution is simply represented by \begin{equation}\label{eq:special_feynman_kac}
    u(t,x; \mathbf{W}) = E\left[ g(X_T^{t,x})\right], \quad   (t,x) \in [0,T] \times \R^n,
  \end{equation}
where $X_{\cdot}^{t,x}$ is the solution to (\ref{eq:Stra-SDE}) with initial time $t$ and initial value $x$. Below, we from now on assume that
\begin{equation}\label{lin_coef}
   b(x)=Ax,\;\;\;\sigma_i(x)=C_i x\;\;\;\text{and}\;\;\;  \beta_j(x)=N_j x,    
\end{equation}
for $i=1, \ldots, m$, $j=1, \ldots, d$, $x\in\R^n$ and where all coefficients $A$, $C_i$, $N_j$ are $n\times n$ matrices.\smallskip

\subsubsection{Explicit solutions to linear RDEs}

We can find an explicit representation for the resulting linear RDE (compare
\ref{eq:Stra-SDE}) by introducing the fundamental solution $\Phi$ to the
linear system. Using the Einstein convention, we formally define $\Phi$ as the
$\R^{n\times n}$-valued process satisfying
\begin{equation}\label{eq:fundamental_sol}
 \Phi_r = I+\int_0^r \left(A-\frac{1}{2}\sum_{i=1}^m C_i^2\right) \Phi_s ds +\int_0^r C_i \Phi_s\circ dB^i_s  + \int_0^r N_j\Phi_s d\mathbf{W}^j_s.
\end{equation}
For $t\leq r$ we can easily see that the following identity holds:
\begin{equation*}
 \Phi_r \Phi_t^{-1} = I+\int_t^r \left(A-\frac{1}{2}\sum_{i=1}^m C_i^2\right) \Phi_s\Phi_t^{-1} ds +\int_t^r C_i \Phi_s\Phi_t^{-1}\circ dB^i_s 
 + \int_t^r N_j\Phi_s\Phi_t^{-1} d\mathbf{W}^j_s.
\end{equation*}
 Consequently, equation (\ref{eq:Stra-SDE}) with the linear coefficients (\ref{lin_coef}) is represented as 
 \begin{equation}\label{eq:expressolwithfund}
  X_r^{t,x} = \Phi_r \Phi_t^{-1} x,\;\;\; 0\leq t\leq r \leq T.
\end{equation}
\paragraph{\textit{Case of commuting matrices}}
We now point out a case, in which $\Phi$ is given explicitly. Let all matrices $A$, $C_i$ and $N_j$ commute, then we have 
\begin{equation}\label{eq:pihcommute}
     \Phi_r=f(r, B^i_r, W^j_r):=\exp\left((A-\frac{1}{2}\sum_{i=1}^m C_i^2)r+C_i B^i_r+ N_jW^j_r\right).      
 \end{equation}
 Using the classical chain rule for geometric rough paths
\begin{equation*}
     df(r, B^i_r, W^j_r)=\frac{\partial}{\partial t} f(r, B^i_r, W^j_r) dr + \frac{\partial}{\partial b_i} f(r, B^i_r, W^j_r) \circ dB^i_r
     + \frac{\partial}{\partial w_j} f(r, B^i_r, W^j_r) d\mathbf{W}^j_r,
 \end{equation*}
 we indeed see that $f$ solves (\ref{eq:fundamental_sol}) taking into account that 
 \begin{equation}
  \exp\left(\sum_{i=1}^q A_i \right)=\prod_{i=1}^q \exp\left(A_i\right)\;\;\;\text{and}\;\;\;A_j\exp\left(A_i\right)=\exp\left(A_i\right)A_j
 \end{equation}
for commuting matrices $A_1, \ldots, A_q$.

\paragraph{\textit{Case of nilpotent matrices}}

We know from the above considerations that the fundamental matrix $\Phi$ is given by (\ref{eq:pihcommute}) if all matrices commute, i.e., the rough 
path structure does not enter the solution at all. For that reason, we investigate another case with an explicit solution. Let us again 
look at the linear RDE which is of the form:
\begin{equation}
  \label{eq:linear-rde}
  dX_r = \sum_{i=1}^{d_1} A_i X_r d\mathbf{Z}^{g, i}_r, \quad r \in [t,T],
\end{equation}
where $d_1=m+d$, $\mathbf{Z}^g$ is the geometric joint rough path of $B$ and $W$, $A_i=C_i$, $A_{j+m}=N_j$ for $i=1, \ldots, m$ and $j=1, \ldots, 
d$. For simplicity, we assume to have a zero drift, i.e., $A-\frac{1}{2}\sum_{i=1}^m C_i^2=0$.\smallskip

The Chen-Strichartz formula, see~\cite{S87}, 
provides a general solution formula in terms of a infinite series in the general case, involving higher order iterated integrals of the driving rough 
path.\footnote{The Chen-Strichartz formula is usually given for the smooth case, but one can repeat the proof for the rough case, see, for 
instance,~\cite{B04} for the Brownian case in a free setting.} For simplicity, we shall only provide the solution in the step-2 nilpotent case, i.e., 
we assume that
\begin{equation}\label{eq:2nilpotent}
  \forall i, j, l:\ [[A_i,A_j],A_l] = 0,
\end{equation}
where $[A,B] \coloneqq AB - BA$ denotes the usual commutator of matrices.
\begin{lemma}
  \label{lem:linear-rde-solution-nilpotent}
  For $1 \le i \neq j \le d_1$ let
  \begin{equation*}
    a^{ij}_{t,r} \coloneqq \half \int_t^r Z^i_{t,s} dZ^j_s - \half \int_t^r
    Z^j_{t,s} dZ^i_s
  \end{equation*}
  denote the \emph{area} swapped by the paths $Z^i$ and $Z^j$, where the
  integrals are, of course, understood in the sense of the rough path
  $\mathbf{Z}^g$. Then, we have
  \begin{equation*}
    X_r^{t, x} = \exp\left( \sum_{i=1}^{d_1} A_i Z^i_{t,r} - \sum_{1 \le i < j \le d_1}
      [A_i,A_j] a^{ij}_{t,r} \right) x.
  \end{equation*}
\end{lemma}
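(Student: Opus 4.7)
The plan is to verify the formula first for smooth driving paths and then extend to general geometric rough paths by continuity of the Itô--Lyons map. I would define the candidate $Y_r := \exp(\Omega_{t,r}) x$, where $\Omega_{t,r} := \sum_i A_i Z^i_{t,r} - \sum_{i<j} [A_i, A_j] a^{ij}_{t,r}$, and approximate $\mathbf{Z}^g$ in $p$-variation by the canonical lifts of a sequence of smooth paths $Z^{(n)}$ (possible precisely because $\mathbf{Z}^g$ is assumed geometric); for such lifts both the increments $Z^{(n),i}_{t,r}$ and the areas $a^{ij,(n)}_{t,r}$ are given by classical Riemann--Stieltjes integrals.

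For smooth $Z^{(n)}$, equation~\eqref{eq:linear-rde} reduces to the linear matrix ODE $\dot X_r = \bigl(\sum_i A_i \dot Z^{(n),i}_r\bigr) X_r$, so by uniqueness it suffices to check that the candidate $Y^{(n)}_r = \exp(\Omega^{(n)}_{t,r}) x$ satisfies the same ODE. I would use the classical derivative-of-exponential formula
\begin{equation*}
\frac{d}{dr}\exp(\Omega) \;=\; \phi(\mathrm{ad}_\Omega)(\dot{\Omega})\,\exp(\Omega), \qquad \phi(z) \;:=\; \frac{e^z - 1}{z},
\end{equation*}
which reduces the verification to the identity $\dot{\Omega}^{(n)}_{t,r} = \psi(\mathrm{ad}_{\Omega^{(n)}_{t,r}})\bigl(\sum_i A_i \dot Z^{(n),i}_r\bigr)$, where $\psi(z) := z/(e^z-1) = 1 - z/2 + O(z^2)$.

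The step-2 nilpotency assumption~\eqref{eq:2nilpotent} is precisely what makes $\mathrm{ad}_{\Omega}^2$ vanish on both the generators $A_i$ and the brackets $[A_k,A_l]$, so $\psi(\mathrm{ad}_\Omega)$ truncates to $I - \tfrac12 \mathrm{ad}_\Omega$. Using antisymmetry of the commutator to rewrite $\sum_{i,j}[A_i, A_j]\, Z^{(n),i}_{t,r} \dot Z^{(n),j}_r = 2 \sum_{i<j} [A_i, A_j] \dot a^{ij,(n)}_{t,r}$, one finds that $-\tfrac12[\Omega^{(n)}_{t,r}, \sum_i A_i \dot Z^{(n),i}_r] = -\sum_{i<j}[A_i,A_j]\dot a^{ij,(n)}_{t,r}$, which matches the area term in $\dot\Omega^{(n)}_{t,r}$ exactly. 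Finally I would take $n\to\infty$: the left hand side $X^{t,x}_r$ is continuous in the driving rough path by the universal limit theorem~\cite[Thm.~10.26]{FV10}, while the right hand side is a continuous function of the components $Z^i_{t,r}$ and $a^{ij}_{t,r}$ of $\mathbf{Z}^g$, so the identity extends to every geometric rough path.

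The main obstacle I expect is the algebraic book-keeping of the Magnus-type expansion: one must be sure that under the single hypothesis~\eqref{eq:2nilpotent} all higher iterated commutators that would otherwise appear in the Chen--Strichartz series indeed vanish, so that $\psi(\mathrm{ad}_\Omega)$ can legitimately be truncated to its first-order piece without error. Once this is settled, the approximation step is essentially a direct application of the continuity of the Itô--Lyons map together with the continuity of the projections defining $Z^i$ and $a^{ij}$.
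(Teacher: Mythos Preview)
Your proposal is correct and follows the same overall strategy as the paper: establish the formula for smooth drivers and pass to geometric rough paths by continuity of the It\^o--Lyons map. The paper's sketch differs only in how the smooth case is handled: rather than writing down the candidate $\exp(\Omega_{t,r})x$ and verifying via the derivative-of-exponential identity $\tfrac{d}{dr}\exp(\Omega)=\phi(\mathrm{ad}_\Omega)(\dot\Omega)\exp(\Omega)$, the paper simply quotes the Chen--Strichartz (generalized Baker--Campbell--Hausdorff--Dynkin) formula for the logarithm of the fundamental solution of $\dot X_r = A(r)X_r$ with $A(r)=\sum_i A_i\dot Z^i_r$, observes that $[A(s_1),A(s_2)]=-\sum_{i<j}(\dot Z^i_{s_1}\dot Z^j_{s_2}-\dot Z^j_{s_1}\dot Z^i_{s_2})[A_i,A_j]$, and notes that all higher iterated brackets vanish by~\eqref{eq:2nilpotent}. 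Your approach is more self-contained (it effectively re-derives the first two Magnus terms rather than citing a formula) and makes the role of the nilpotency hypothesis very explicit through the truncation $\psi(\mathrm{ad}_\Omega)=I-\tfrac12\mathrm{ad}_\Omega$; the paper's route is shorter but presupposes familiarity with Chen--Strichartz. Substantively they are the same argument.
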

\begin{remark}
  The unusual minus sign in Lemma~\ref{lem:linear-rde-solution-nilpotent}
  comes from the fact that the linear vector field $y \mapsto -[A,B]y$ is the
  Lie bracket of the linear vector fields $y \mapsto Ay$ and $y \mapsto
  By$. In the more general formulation involving general vector fields, the
  minus sign above, therefore, turns into a plus sign.
\end{remark}
\begin{proof}[Sketch of proof of Lemma~\ref{lem:linear-rde-solution-nilpotent}]
  Formally, suppose that the paths $t \mapsto Z^i_t$ are actually smooth, so
  that~\eqref{eq:linear-rde} can be replaced by the non-autonomous ODE
  \begin{equation*}
    \dot{X}_t = A(t) X_t, \quad A(t) \coloneqq \sum_{i=1}^{d_1} A_i \dot{Z}^i_t.
  \end{equation*}
  (Here, $A(t)$ is considered a time dependent vector field.)
  The Chen-Strichartz formula (also known as ``generalized
  Baker-Campbell-Hausdorff-Dynkin formula'')~\cite[formula (G.C-B-H-D)]{S87}
  involves $n$-fold Lie brackets of the vector fields $A(s_j)$ for different
  times $s_j$, $j=1, \ldots, n$. Note that
  \begin{equation*}
    [A(s_1), A(s_2)](x) = - \sum_{1 \le i < j \le d_1} \left( \dot{Z}^i_{s_1}
      \dot{Z}^j_{s_2} - \dot{Z}^j_{s_1} \dot{Z}^i_{s_2} \right)[A_i,A_j] \cdot x, 
  \end{equation*}
  while all Lie brackets of terms involving two or more Lie brackets
  vanish. The result is then obtained by inserting into the formula.
\end{proof}

\subsubsection{Numerical example with commuting matrices}\label{sec:experiment_commute}

For the following numerical considerations we now assume that $m=1$, $n=d=2$, $T=1$ and 
$g(x)=\exp\left(-0.5 \left\|x\right\|^2\right)$.\medskip

We specify the matrices in (\ref{lin_coef}) of the linear system. We introduce a matrix $V$ which satisfies the property $V^{-1}=V$: 
\begin{equation*}
 V= \begin{pmatrix}
    -\frac{1}{\sqrt{2}}&\frac{1}{\sqrt{2}}\\
    \frac{1}{\sqrt{2}}&\frac{1}{\sqrt{2}}
  \end{pmatrix}.
  \end{equation*}
Using $V$, we then set
 \begin{align*}
 A&=V \begin{pmatrix}
    0.5& 0\\
    0&4.5
  \end{pmatrix}V= \begin{pmatrix}
    2.5&2\\
    2&2.5
  \end{pmatrix},\;\;\;C_1=C=V \begin{pmatrix}
    1& 0\\
    0&3
  \end{pmatrix}V= \begin{pmatrix}
    2&1\\
    1&2
  \end{pmatrix},\\
 N_1&= V \begin{pmatrix}
    0.5& 0\\
    0&1.5
  \end{pmatrix}V=\begin{pmatrix}
    1&0.5\\
    0.5&1
  \end{pmatrix}\;\;\;\text{and}\;\;\;N_2= V \begin{pmatrix}
    3& 0\\
    0&1
  \end{pmatrix}V=\begin{pmatrix}
    2&-1\\
    -1&2
  \end{pmatrix}.
 \end{align*}
Due to their special structure, all these matrices commute. Furthermore, we observe that $A=\frac{1}{2}C^2$ such that the drift is zero. Hence, the 
corresponding fundamental solution is of the following simple form \begin{align*}
       \Phi_r&=V \exp\left(\begin{pmatrix}
    1& 0\\
    0&3
  \end{pmatrix}B_r+\begin{pmatrix}
    0.5& 0\\
    0&1.5
  \end{pmatrix}W_r^1+\begin{pmatrix}
    3& 0\\
    0&1
  \end{pmatrix}W_r^2\right) V\\
  &=V \begin{pmatrix}
    \exp\left(B_r+0.5 W_r^1+3W_r^2\right)& 0\\
    0&\exp\left(3 B_r+1.5 W_r^1+W_r^2\right)
  \end{pmatrix} V.
            \end{align*}
Inserting (\ref{eq:expressolwithfund}) into (\ref{eq:special_feynman_kac}) with the above fundamental matrix and using numerical integration to 
estimate the expected value delivers an ``exact'' solution $u$ of the underlying RPDE. The goal of this section is to compare the exact solution with 
the solution that is obtained from the pseudo-regression procedure from Section~\ref{sec:regression}.

We conduct the experiments for two different paths of $W$ (see Figure \ref{fig:paths_lin}). In both cases we choose $W^1$ and $W^2$ to be fixed paths 
of independent scalar fractional Brownian motions with Hurst index $H=0.4$.
\begin{figure}[h]
\centering
\includegraphics[width=0.85\textwidth,height=120px]{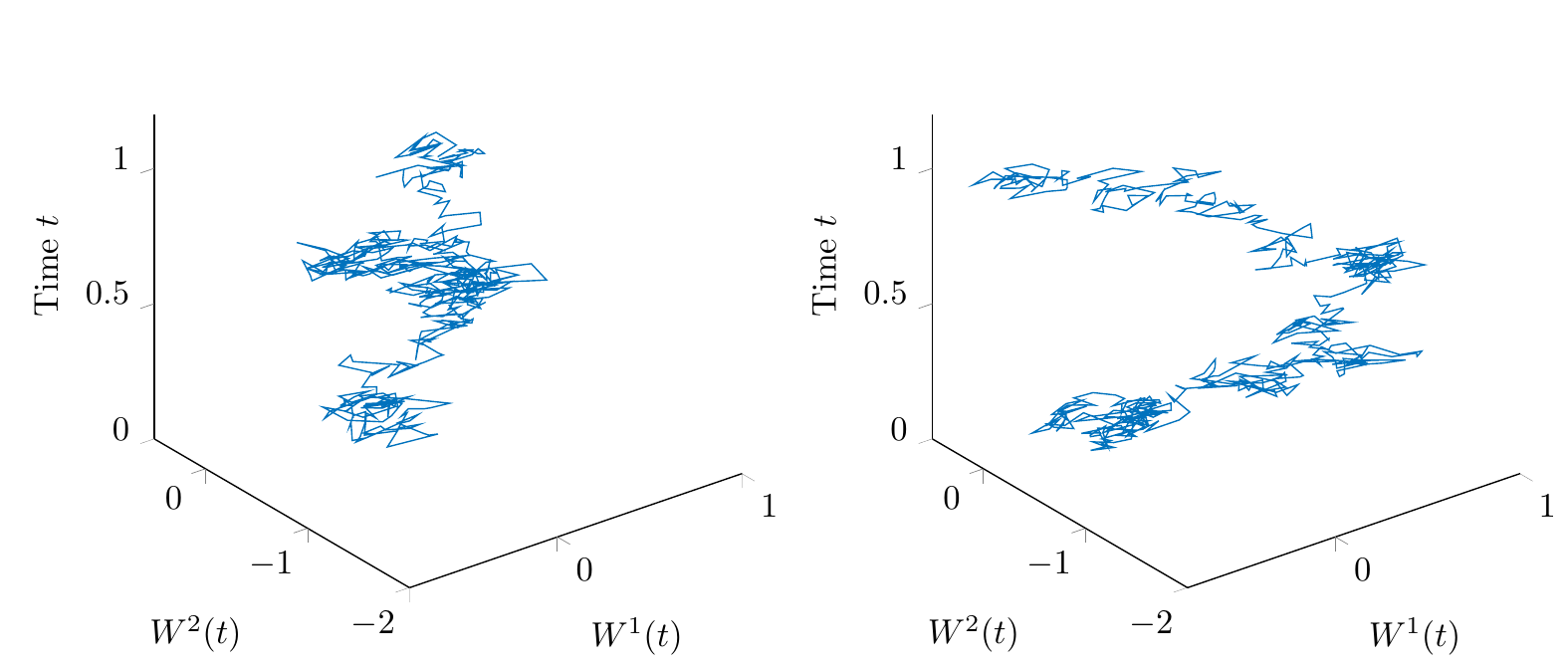}
\caption{Two paths of a fractional Brownian motion with Hurst index $H=0.4$.}
\label{fig:paths_lin}
\end{figure}

\paragraph{\textit{Simulations for the first path (left picture in Figure \ref{fig:paths_lin})}}
We compute a numerical approximation $\tilde u$ of the RPDE solution $u$ based
on the pseudo-regression procedure, see Theorem~\ref{psth}, where for every fixed $t\in[0, 1]$ the approximation $\tilde u(t, \cdot)$ is derived according to (\ref{semstoch}) and (\ref{gass}). We start with the 
left driver in Figure \ref{fig:paths_lin}. Within the numerical approximation we encounter three different errors. The regression error itself 
depends on the number of basis functions $K$ and the number of samples $M$ that we use to approximate the expected value with respect to the 
probability measure $\mu$ of the initial data. In order to generate the paths of (\ref{eq:Stra-SDE}) that we require for the regression approach, we 
need to apply the Euler scheme from Section \ref{sec:Euler}. The error in this discretization depends on the step size $h$ which is our third 
parameter.

We choose $\mu$ to be the uniform measure on $[0,1]^2$ and zero elsewhere.
Based on this we choose Legendre polynomials on $[0, 1]^2$ as an ONB 
$\left(\psi_i\right)_{i=1, \ldots, K}$ of $L^2(\mathbb R^2, \mu)$. To be more precise, we consider the Legendre polynomials up to a certain fixed order $p$ for 
every spatial direction and then take into account the total tensor product between the basis functions of different spatial variables, such that $K=(p+1)^n$.

In Figure \ref{fig:lin_regression_W12} we plot the regression solution $\tilde u$ on $[0, 1]^2$ for three different time points. Here, we use $K=36$ 
Legendre polynomials, $M=10^6$ samples and a step size $h=2^{-9}$ of the Euler scheme (\ref{eq:simple_euler_scheme}). We observe in our simulations 
that $\tilde u$ is a very good approximation for the reference solution $u$ for these fixed parameters. The plots for $u$ look exactly as in Figure 
\ref{fig:lin_regression_W12}. Since there is no visible difference between $u$ and $\tilde u$, we omit the pictures for $u$.\smallskip

\begin{figure}[h!]
\centering
\includegraphics[width=0.32\textwidth,height=120px]{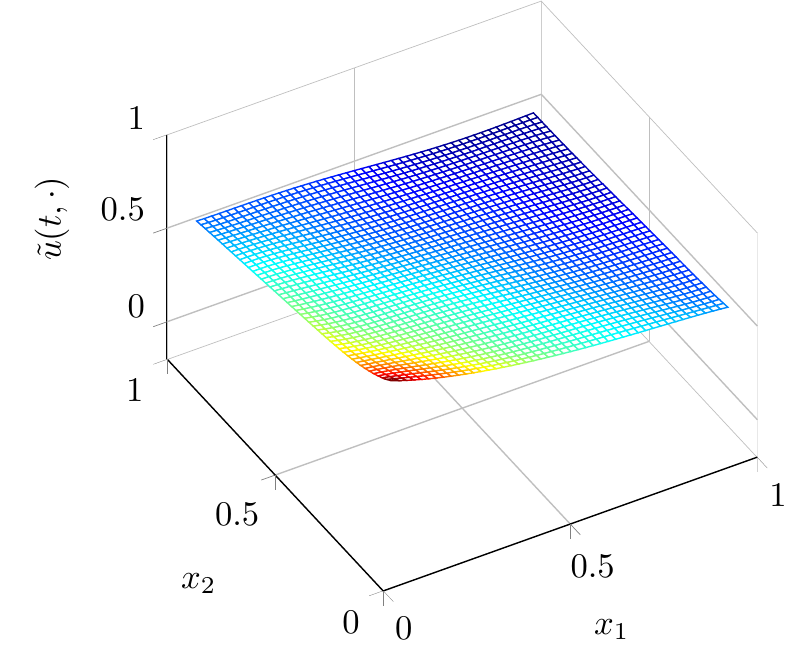}
\includegraphics[width=0.32\textwidth,height=120px]{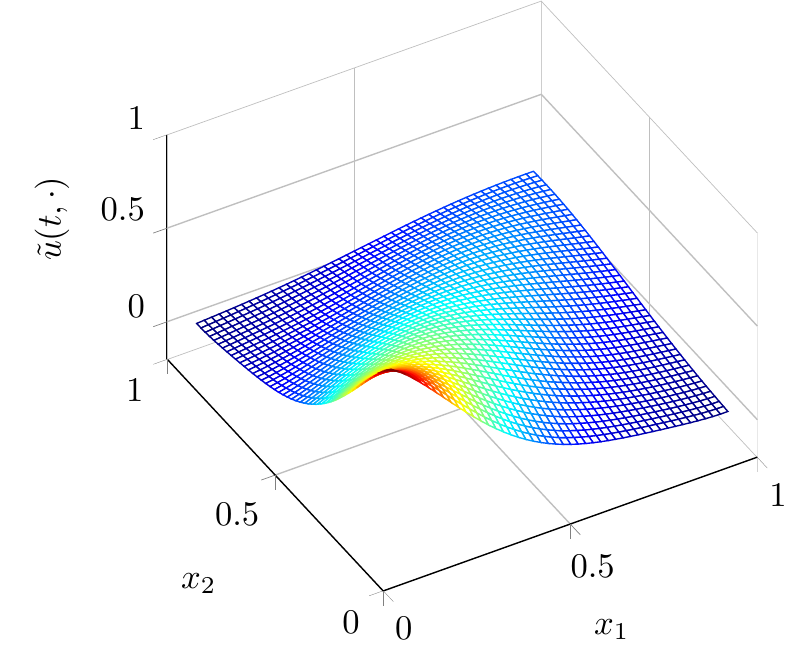}
\includegraphics[width=0.32\textwidth,height=120px]{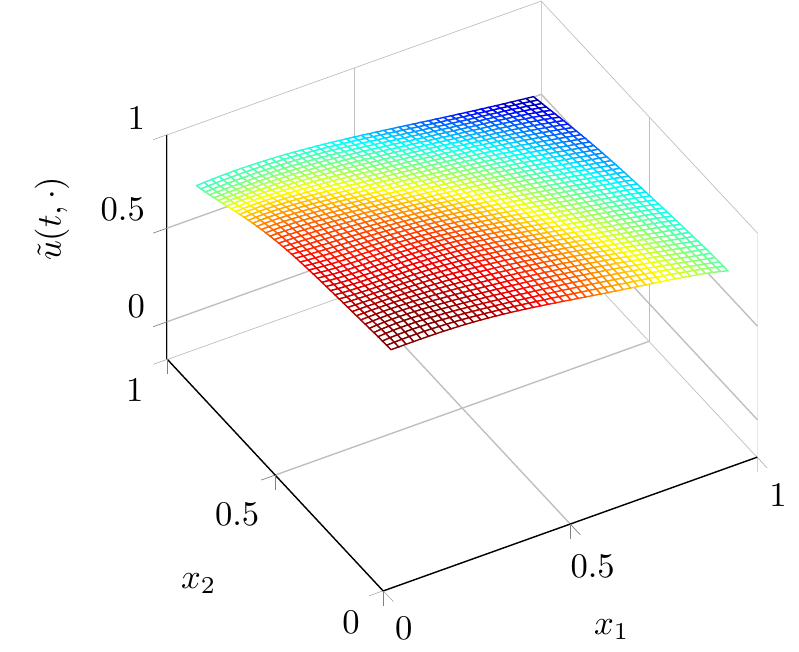}
\caption{Pseudo-regression solution $\tilde u(t,\cdot)$, $t=0, 0.84, 0.99$, of the RPDE driven by the left path in Figure \ref{fig:paths_lin}. The 
parameters are $K=36$, $h=2^{-9}$, $M=10^6$.}
\label{fig:lin_regression_W12}
\end{figure}
Below, we investigate how sensitive the pseudo-regression approach is in every single parameter. Therefore, we always fix two parameters and vary the 
remaining third one. All the errors are measured in $L^2([0, 1]^2)$, i.e., we compute $\left\|u(t, \cdot)-\tilde u(t, \cdot) \right\|_{L^2([0, 
1]^2)}$, $t\in[0, 1]$, or the corresponding relative error. In Figure \ref{fig:error_dif_h}, the absolute and relative errors are shown for different 
step sizes $h=2^{-7}, 2^{-8}, 2^{-9}$. If we compare the curves with the largest step size with the ones having the smallest step size, we can see 
that there is a remarkable difference. We observe that the error is most of the times twice and sometimes up to three time larger when using a four 
times larger step size. This can lead to an relative approximation error of more than $10\%$. This implies that a small step size $h$ is recommended 
in order to ensure a small error. This is not surprising since the order of convergence in $h$ is worst out of all parameters. \smallskip

\begin{figure}[h]
\centering
\includegraphics[width=0.85\textwidth,height=120px]{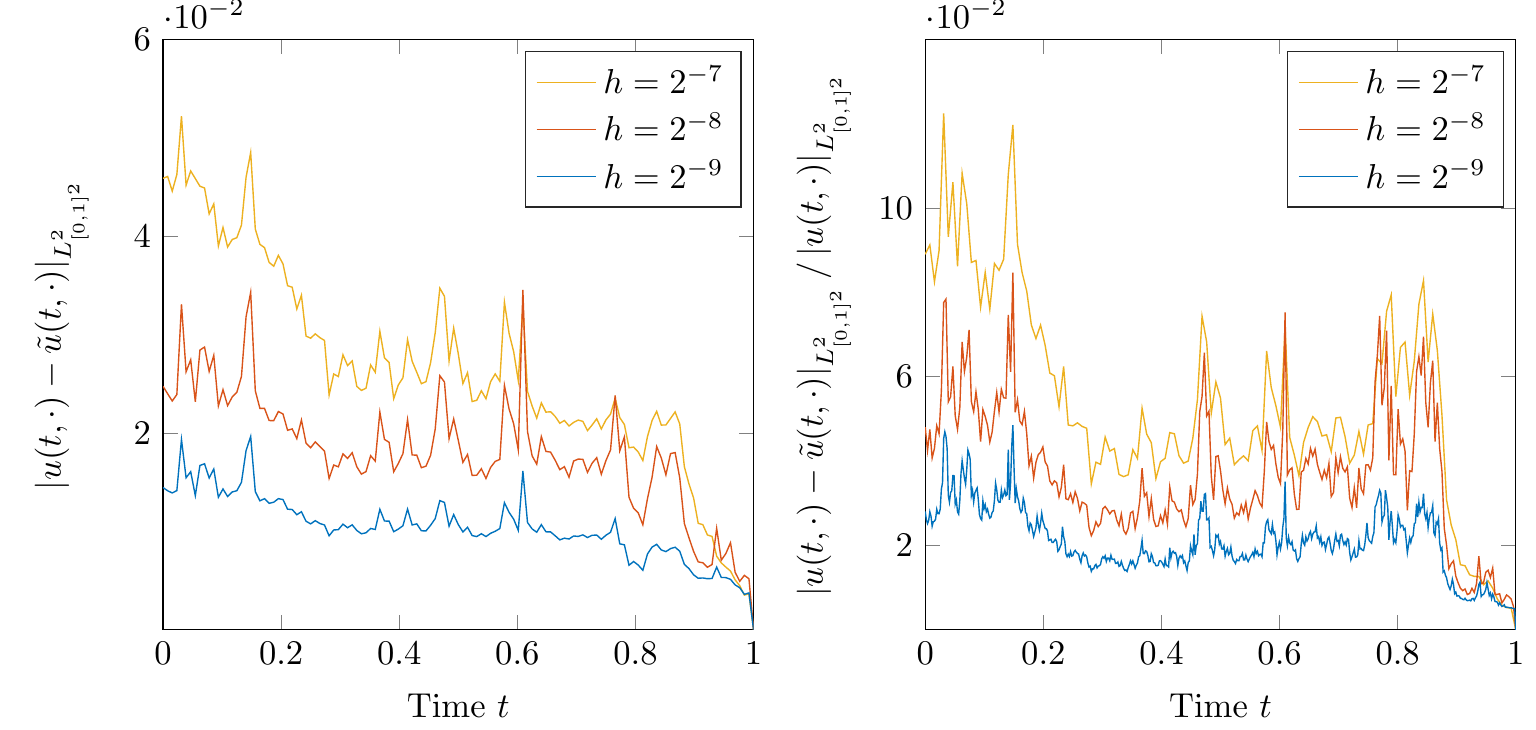}
\caption{Absolute and relative error between RPDE and pseudo-regression solution. The parameters are $K=36$, $M=10^6$ and $h=2^{-7}, 2^{-8}, 2^{-9}$.}
\label{fig:error_dif_h}
\end{figure}
Now we fix the number of basis functions and the step size of the Euler method. For different numbers of samples $M=10^4, 10^5, 10^6$, we find the 
errors in Figure \ref{fig:error_dif_M}. We see that it does not really matter whether $10^6$ or $10^5$ samples are used, whereas $10^4$ samples are 
probably too few, since the relative error can be up to $9\%$.\smallskip

\begin{figure}[h]
\centering
\includegraphics[width=0.85\textwidth,height=120px]{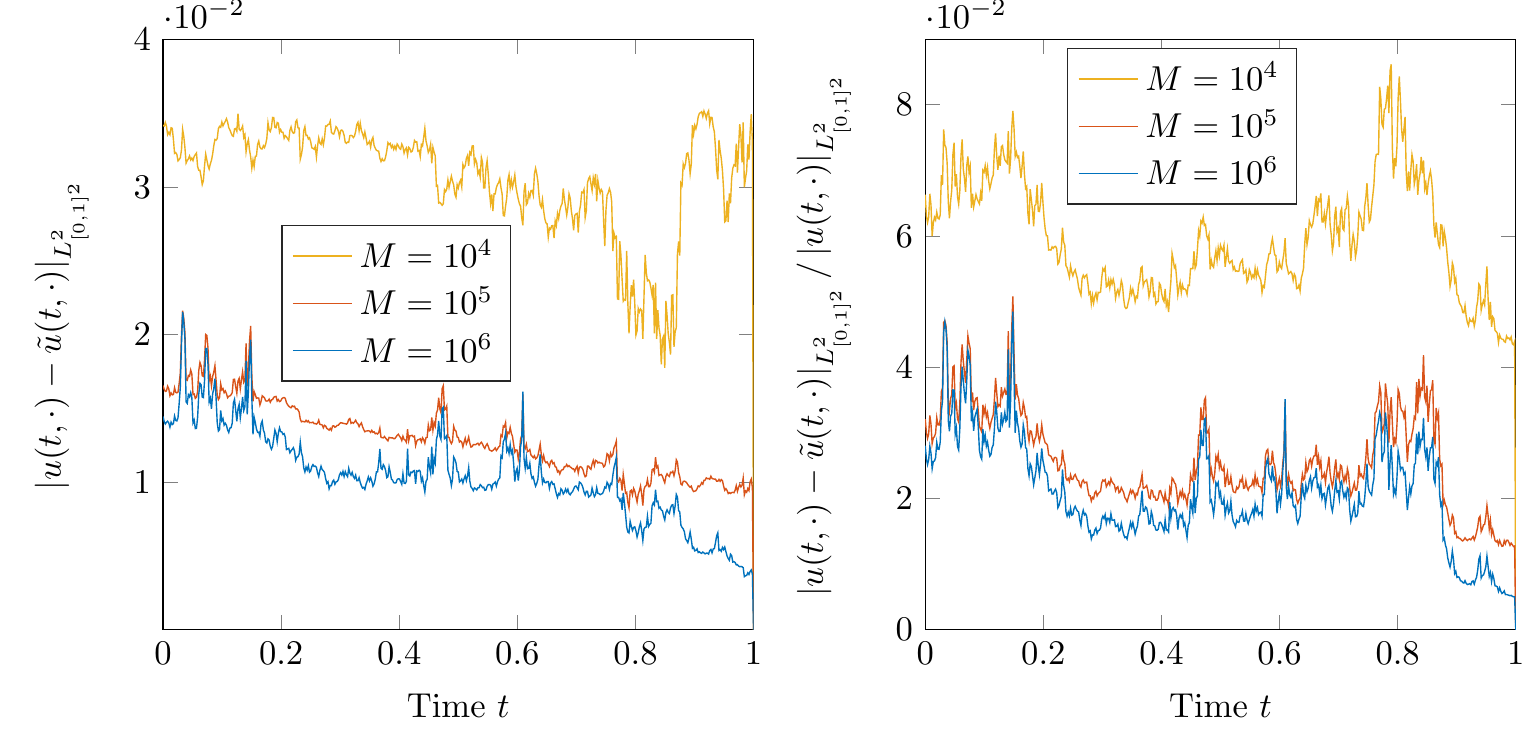}
\caption{Absolute and relative error between RPDE and pseudo-regression solution. The parameters are $K=36$, $M=10^4, 10^5, 10^6$ and $h=2^{-9}$.}
\label{fig:error_dif_M}
\end{figure}
It remains to analyze the error in the number of basis functions. In Figure \ref{fig:lin_regression_W12}, the solution looks relatively flat, such 
that it is not surprising that the parameter $K$ only plays a minor role. Since there is barely a difference when varying $K$, we state the 
logarithmic errors in Figure \ref{fig:error_dif_K} for $K=16, 81$. The error for $K=36$ lies between the curves in Figure \ref{fig:error_dif_K} and 
is omitted because it would have been hard to distinguish between the plots if it would have been included. Even in the logarithmic scale there is 
almost no difference in the errors. Moreover, we observe that for most of the time points, an additional error is caused by taking too many 
polynomials. Thus, the approach is not at all sensitive in the parameter $K$ for this problem with the left driving path in Figure  
\ref{fig:paths_lin}. This is not true for every driving path as the following experiment will show. Using the right path in Figure  
\ref{fig:paths_lin} as the driver instead will lead to a much larger error if we choose the same parameters as before.
\begin{figure}[h]
\centering
\includegraphics[width=0.85\textwidth,height=120px]{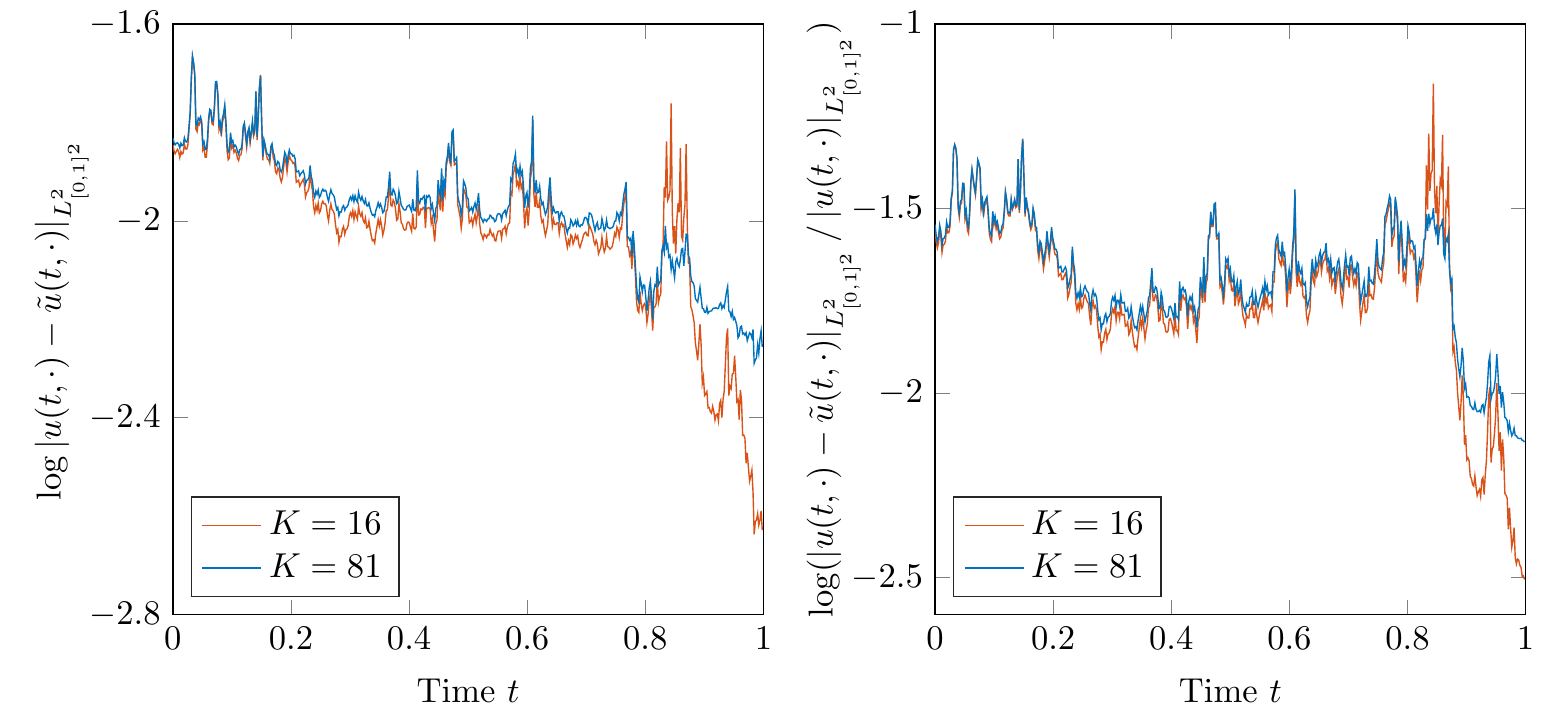}
\caption{Logarithmic absolute and relative error between RPDE and pseudo-regression solution. The parameters are $K=16, 81$, $M=10^6$ and $h=2^{-9}$.}
\label{fig:error_dif_K}
\end{figure}

\paragraph{\textit{Simulations for the second path (right picture in Figure \ref{fig:paths_lin})}}

We conduct a second experiment with the same example as above. We only change the driving path, i.e., the left path in Figure \ref{fig:paths_lin} is 
replaced by the right one. This leads to a very large relative $L^2$-error for $M=10^6$, $h=2^{-9}$ and $K=36$, see Figure \ref{fig:error_dif_K_W1}. 
In the worst case ($t=0.48$) the relative error is almost $80\%$. The reason for this can be seen in Figure \ref{fig:exact_W1}, where $u(0.48, 
\cdot)$ (left) is compared with $\tilde u(0.48, \cdot)$ (right). The exact solution in this worst case is close to be a delta function which is 
generally hard to approximate. The pseudo-regression solution clearly looks differently which shows that the parameter $K$ depends on the underlying driving 
path. If we increase the number of polynomials to $K=121$, we can reduce the error in Figure \ref{fig:error_dif_K_W1} but still many more basis 
functions would be required to obtain a small relative error which is still large at every time point, where $u$ is close to be a delta function.
\begin{figure}[h]
\centering
\includegraphics[width=0.425\textwidth,height=150px]{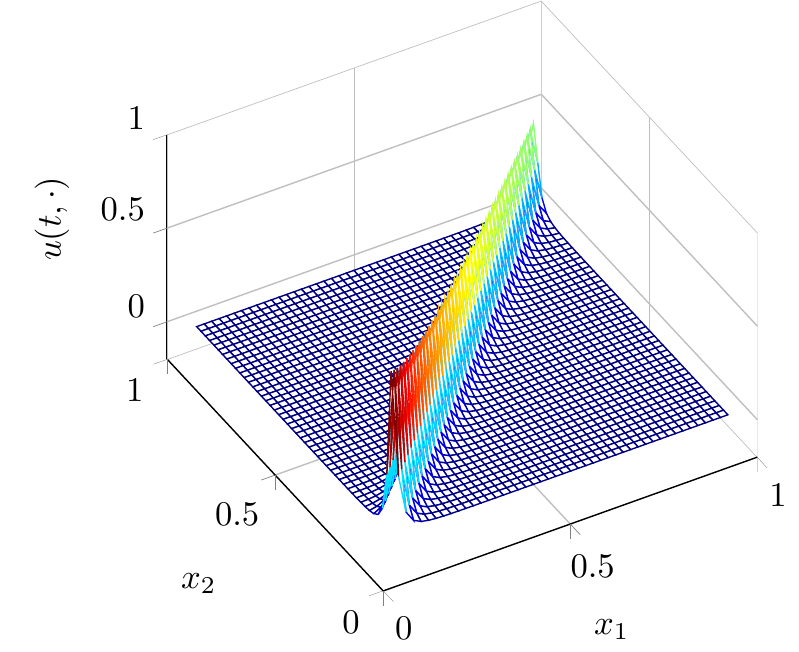}
\includegraphics[width=0.425\textwidth,height=150px]{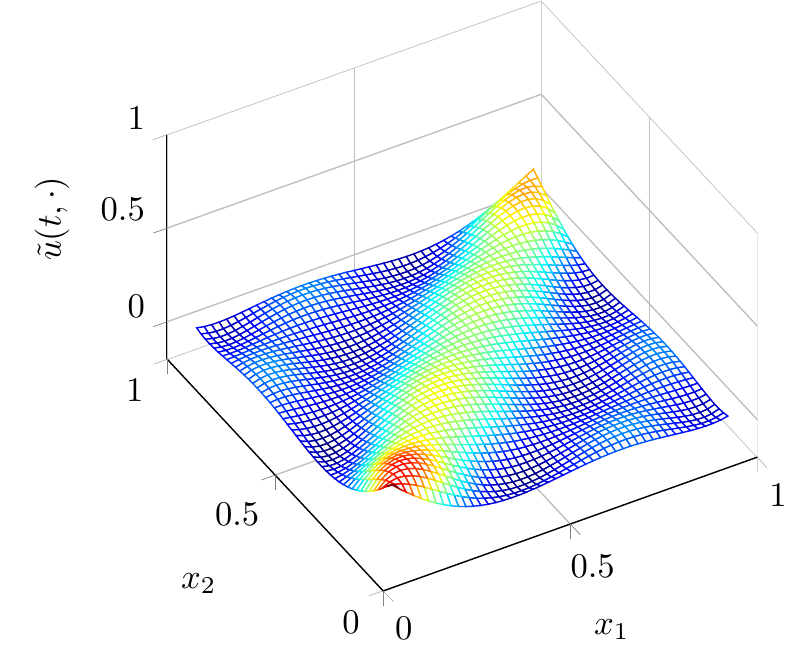}
\caption{Exact solution $u(t,\cdot)$ (left) and pseudo-regression solution $\tilde u(t,\cdot)$ (right), $t=0.48$, of the RPDE driven by the right 
path in Figure \ref{fig:paths_lin} (coefficients as in (\ref{lin_coef})). The regression parameters are $K=36$, $h=2^{-9}$, $M=10^6$.}
\label{fig:exact_W1}
\end{figure}
\begin{figure}[h]
\centering
\includegraphics[width=0.425\textwidth,height=120px]{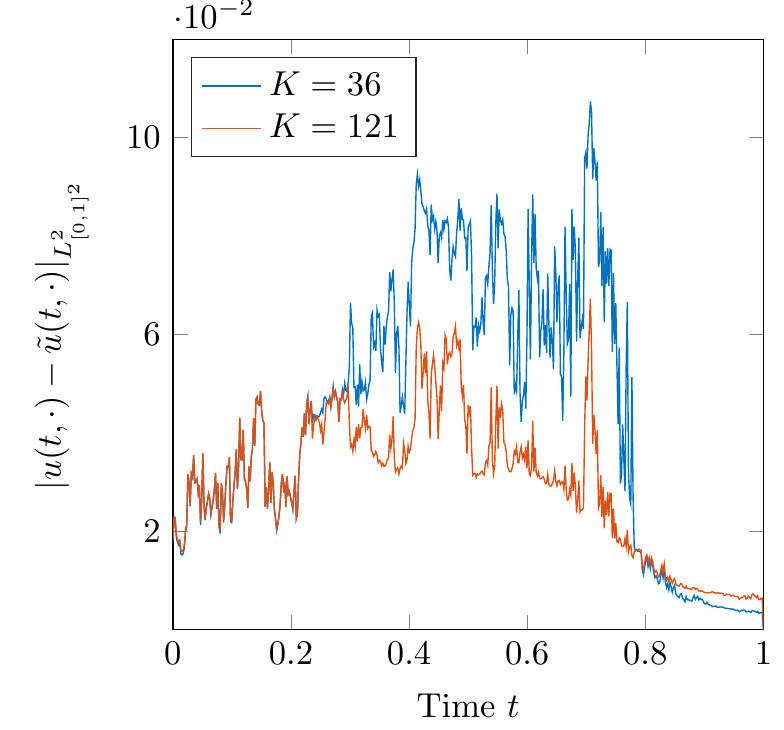}
\includegraphics[width=0.425\textwidth,height=120px]{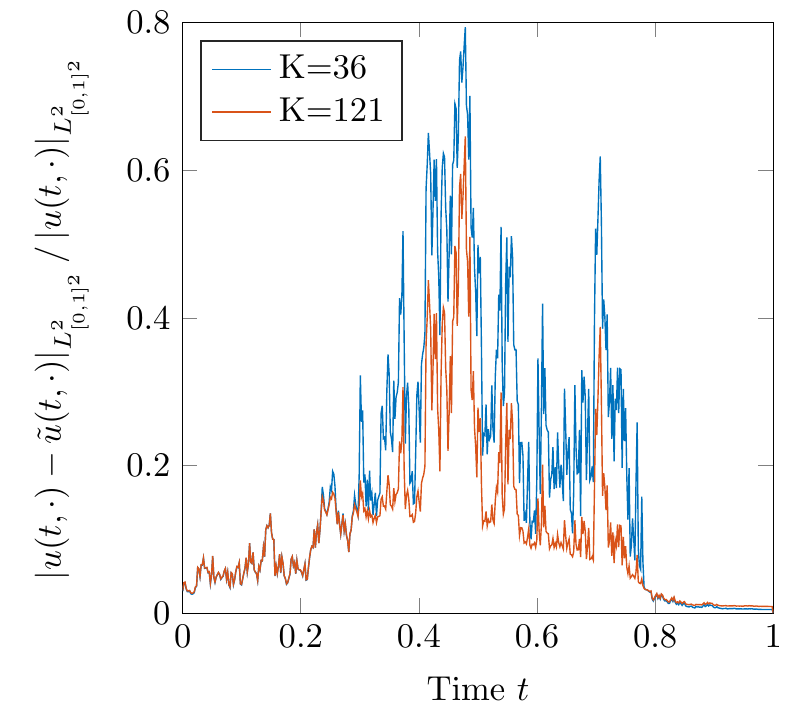}
\caption{Absolute and relative error between RPDE and pseudo-regression solution with parameters $M=10^6$, $h=2^{-9}$ and $K=36, 121$.}
\label{fig:error_dif_K_W1}
\end{figure}

\subsubsection{Numerical example with nilpotent matrices}

In Subsection \ref{sec:experiment_commute} an example has been considered that does not depend on the complete rough path but only on the path. 
Therefore, we investigate another case, where still a reference solution can be derived but this time it depends on the full rough path. \smallskip

Let us consider an scenario that fits the framework (\ref{eq:2nilpotent}). We set $b, c, \gamma\equiv 0$ in system (\ref{eq:Cauchy-RPDE}) with 
terminal time $T=1$ and terminal value $g(x)=\exp\left(-0.5 \left\|x\right\|^2\right)$. Moreover, we define 
\begin{equation}\label{lin_coef_3D}
   \sigma(x)=A_1 x,\;\;\; \beta_1(x)=A_2 x\;\;\;\text{and}\;\;\;  \beta_2(x)=A_3 x,  
\end{equation}
where we assume $m=1$, $d=2$ and a three-dimensional space variable $x\in\mathbb R^3$. Again, by Theorem \ref{thr:existence+uniqueness+cauchy},
the solution to (\ref{eq:Cauchy-RPDE}) has the following stochastic representation: \begin{equation}\label{eq:special_feynman_kac2}
    u(t,x; \mathbf{W}) = E\left[ g(X_1^{t,x})\right], \quad   (t,x) \in [0,1] \times \R^3,
  \end{equation}
where $X_1^{t,x}$ satisfies the RDE \begin{equation}
  \label{eq:linear-rde_example}
  dX_r = A_1 X_r dB_r+A_2 X_r\mathbf{W}^1_r+A_3 X_r\mathbf{W}^2_r,\quad X_t=x, \quad r \in [t,1].
\end{equation}
Now we fix the coefficients such that (\ref{eq:2nilpotent}) is fulfilled: \begin{align*}
  A_1=\left(\begin{matrix}
       0&0&1\\0&0&0\\0&0&0
      \end{matrix}\right),\quad A_2=\left(\begin{matrix}
       0&1&0\\0&0&0\\0&0&0
      \end{matrix}\right),\quad A_3=\left(\begin{matrix}
       0&0&0\\0&0&1\\0&0&0
      \end{matrix}\right).                                                                     
                              \end{align*}
Notice that $A_1^2=0$ implies that the It\^o-Stratonovich correction term is zero in (\ref{eq:linear-rde_example}) such that we automatically obtain 
a geometric driver in the equation. The driving path is again a fractional Brownian motion with Hurst index $H=0.4$.
Now, since $A_1$ commutes with $A_2$ and $A_3$ it can be seen that $\ [A_1,A_2] = \ [A_1,A_3]= 0$. Furthermore, we observe that $\ [A_2,A_3]= A_1$ 
which by Lemma \ref{lem:linear-rde-solution-nilpotent} leads to the following solution representation:
\begin{align*}
  X_r^{t,x} = \exp\left( A_1 (B_{t,r}-a^{12}_{t,r})+A_2 W^1_{t,r}+A_3 W^2_{t,r}\right) x,
\end{align*}
where the term\begin{equation*}
    a^{12}_{t,r} = \half \left(\int_t^r W^1_{t,s} dW^2_s - \int_t^r W^2_{t,s} dW^1_s\right)
  \end{equation*}
is approximated numerically by using piece-wise linear approximations to $W^1$ and $W^2$ on a very fine time grid. Consequently, we have 
\begin{align*}
  X_r^{t,x} = \exp\left( \begin{smallmatrix}
       0& W^1_{t,r}&B_{t,r}-a^{12}_{t,r}\\0&0& W^2_{t,r}\\0&0&0
      \end{smallmatrix}\right) x=:\exp\left( D_{t,r}\right) x.
\end{align*}
The matrix $D_{t,r}$ is nilpotent with index $3$, so that $\exp\left( D_{t,r}\right)=I+D_{t,r}+\frac{1}{2}D_{t,r}^2$ which then leads to 
\begin{align*}
  X_r^{t,x} = \left(\begin{smallmatrix}
       1& W^1_{t,r}&B_{t,r}-a^{12}_{t,r}+0.5 W^1_{t,r}W^2_{t,r} \\0&1& W^2_{t,r}\\0&0&1
      \end{smallmatrix}\right) x.
\end{align*}
Inserting this into (\ref{eq:special_feynman_kac2}) and estimating the expected value with numerical integration provides the exact solution $u$ of 
the underlying RPDE.\smallskip

The probability measure within the regression approach has the same structure as before but it is now defined on 
$\mathbb R^3$, i.e., we choose $\mu$ to be the uniform measure on $[0,1]^3$ and zero elsewhere.  
So, the ONB of $L^2(\mathbb R^3, \mu)$ is given by Legendre polynomials on $[0, 1]^3$. In Figure \ref{fig:error_regress_3D}, the 
absolute and the relative error in $L^2([0, 1]^3)$ between the exact and the pseudo-regression solution is considered. The algorithm also works very 
well in this case since the relative error is less than $1\%$ for $K=64$ basis polynomials, $M=10^6$ samples and a Euler step size of 
$h=2^{-9}$.\smallskip

\begin{figure}[h]
\centering
\includegraphics[width=0.85\textwidth,height=120px]{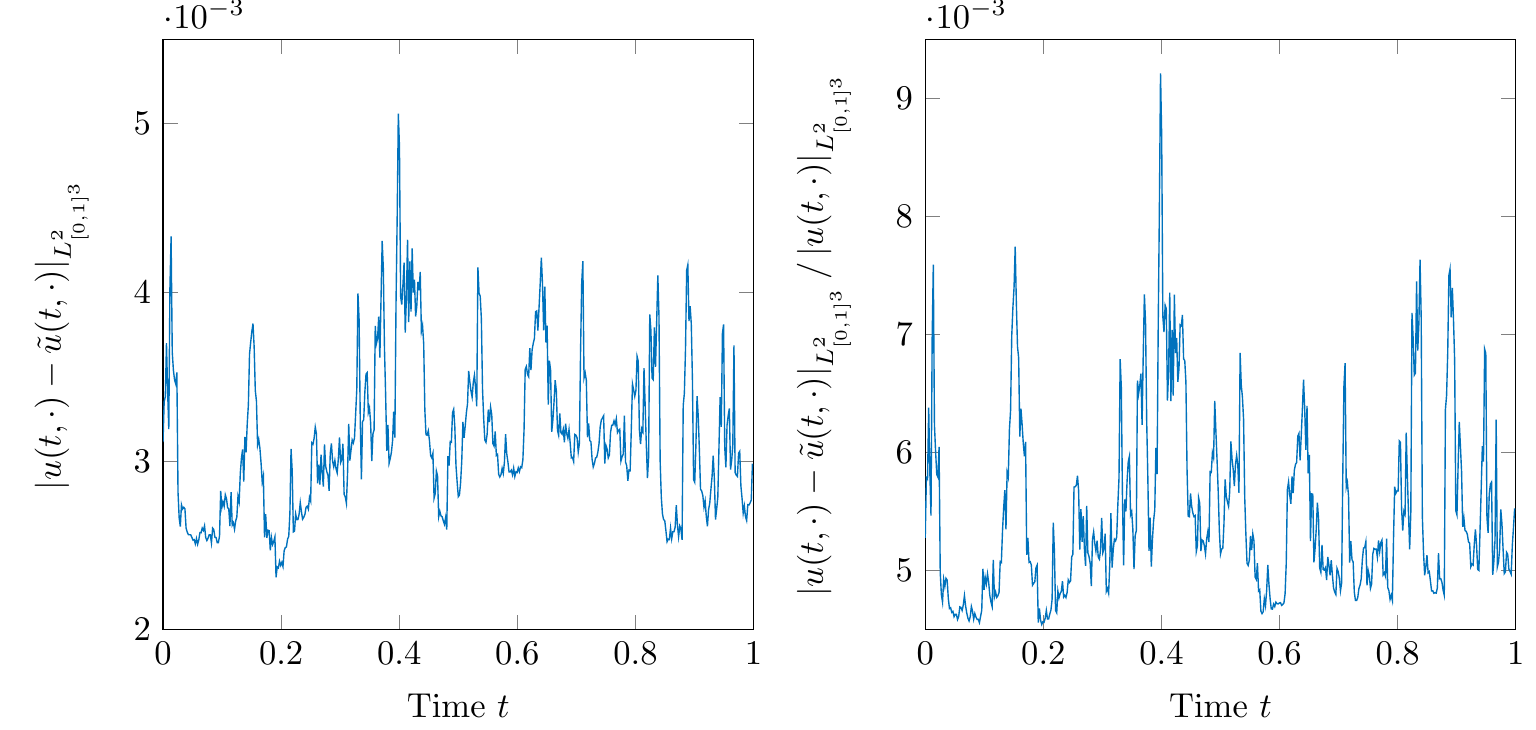}
\caption{Absolute and relative error between the RPDE with coefficients as in (\ref{lin_coef_3D}) and the pseudo-regression solution. The parameters are 
$K=64$, $M=10^6$ and $h=2^{-9}$.}
\label{fig:error_regress_3D}
\end{figure}
The pseudo-regression $\tilde u$ for several fixed time points is illustrated in Figure \ref{fig:regression_3D}. In the cubic domain the color represents 
the corresponding function value of $\tilde u(t, \cdot)$ which is relatively large if the color is red and relatively small if the color is blue. We 
omit the plots for the exact solution since there is no visible difference to the regression solution. \smallskip

\begin{figure}[h]
\centering
\includegraphics[width=0.32\textwidth,height=120px]{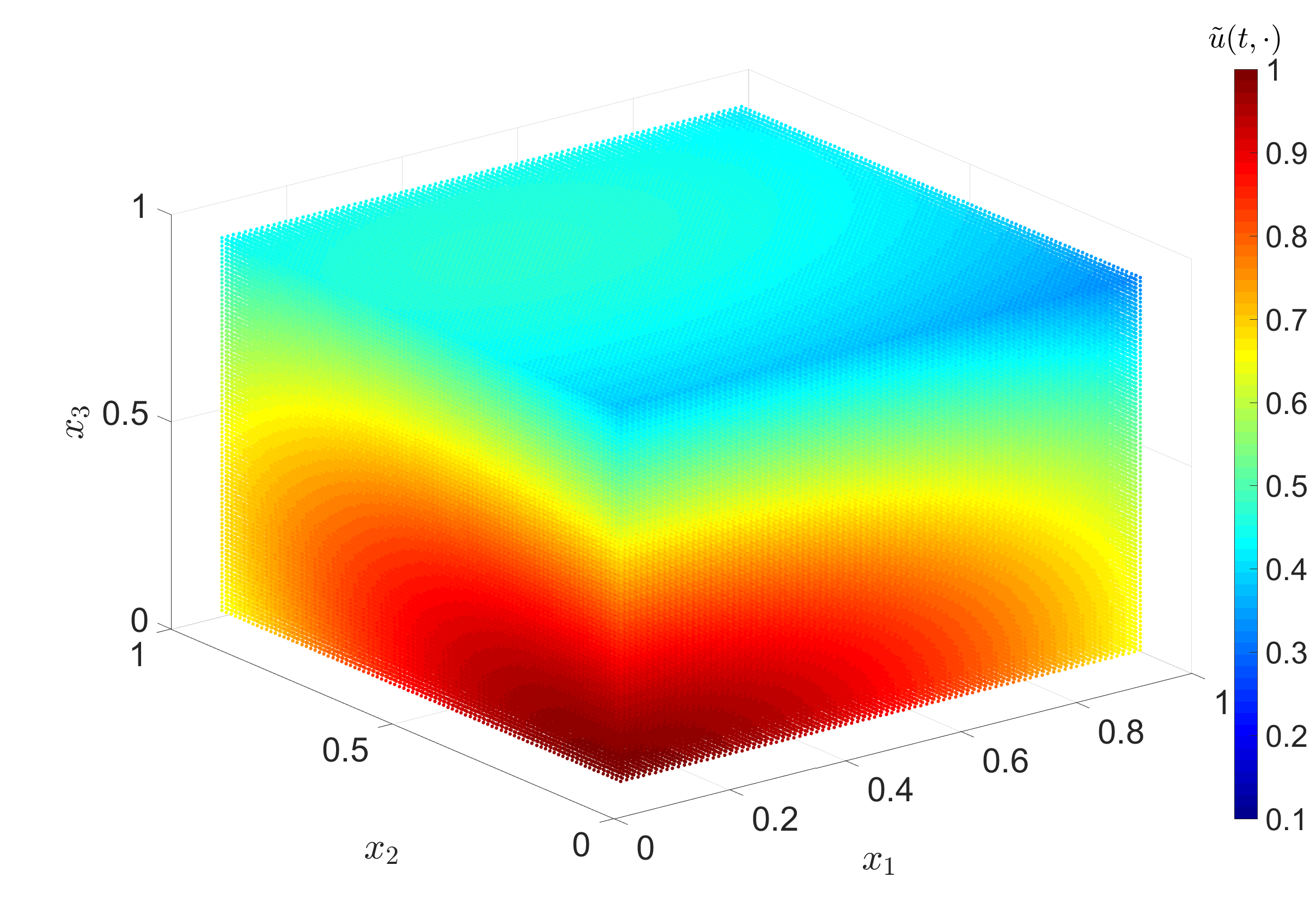}
\includegraphics[width=0.32\textwidth,height=120px]{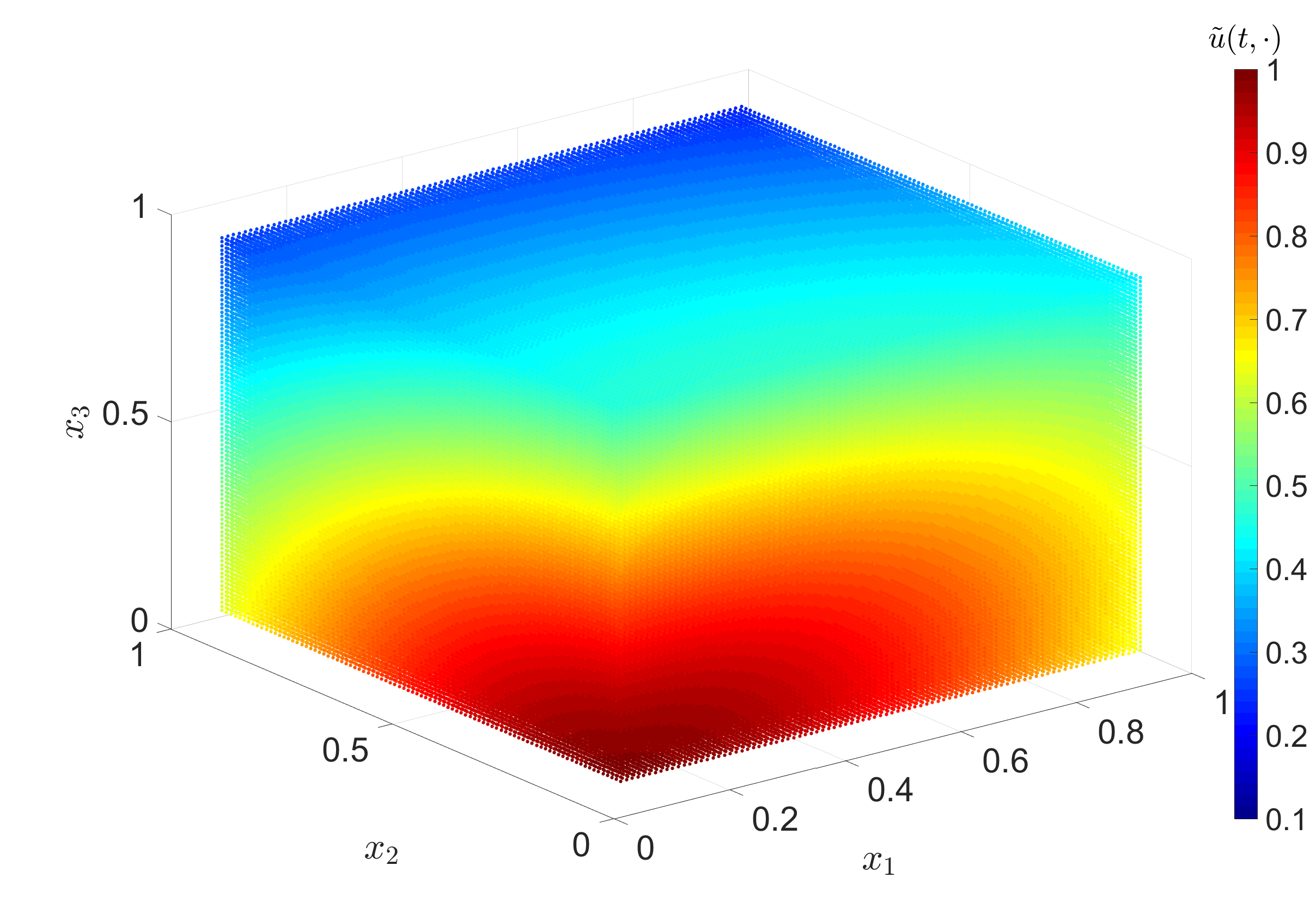}
\includegraphics[width=0.32\textwidth,height=120px]{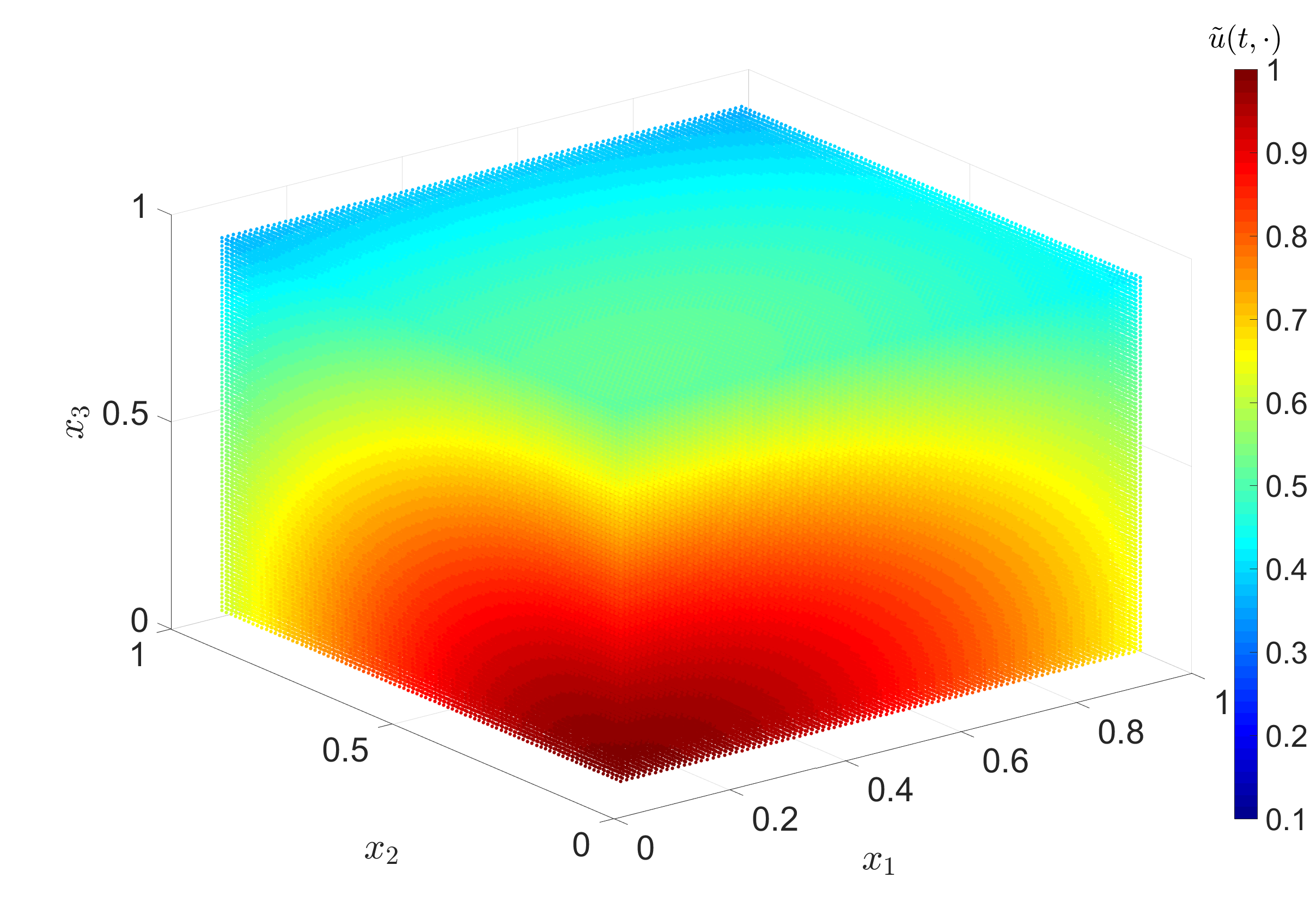}
\includegraphics[width=0.32\textwidth,height=120px]{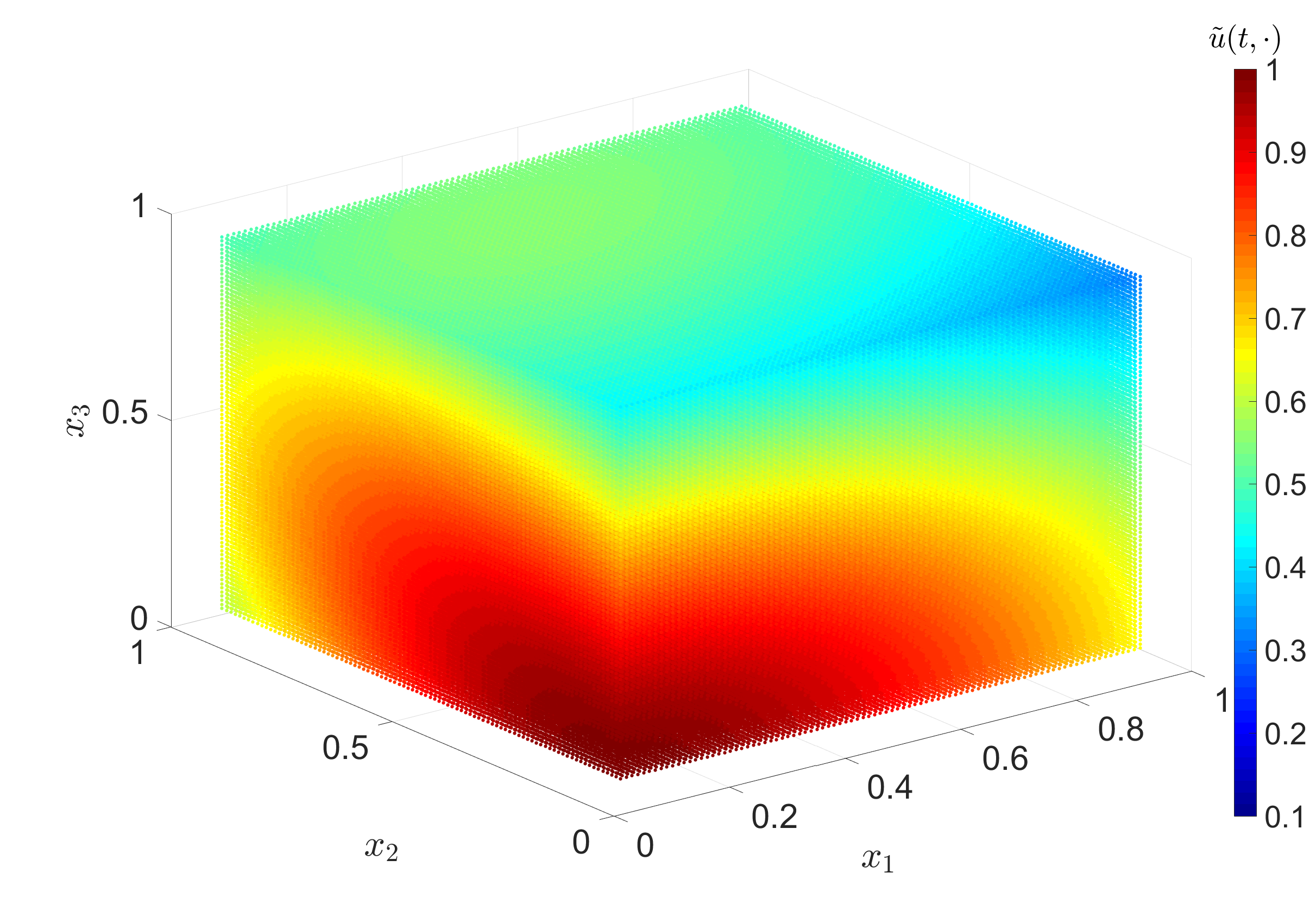}
\includegraphics[width=0.32\textwidth,height=120px]{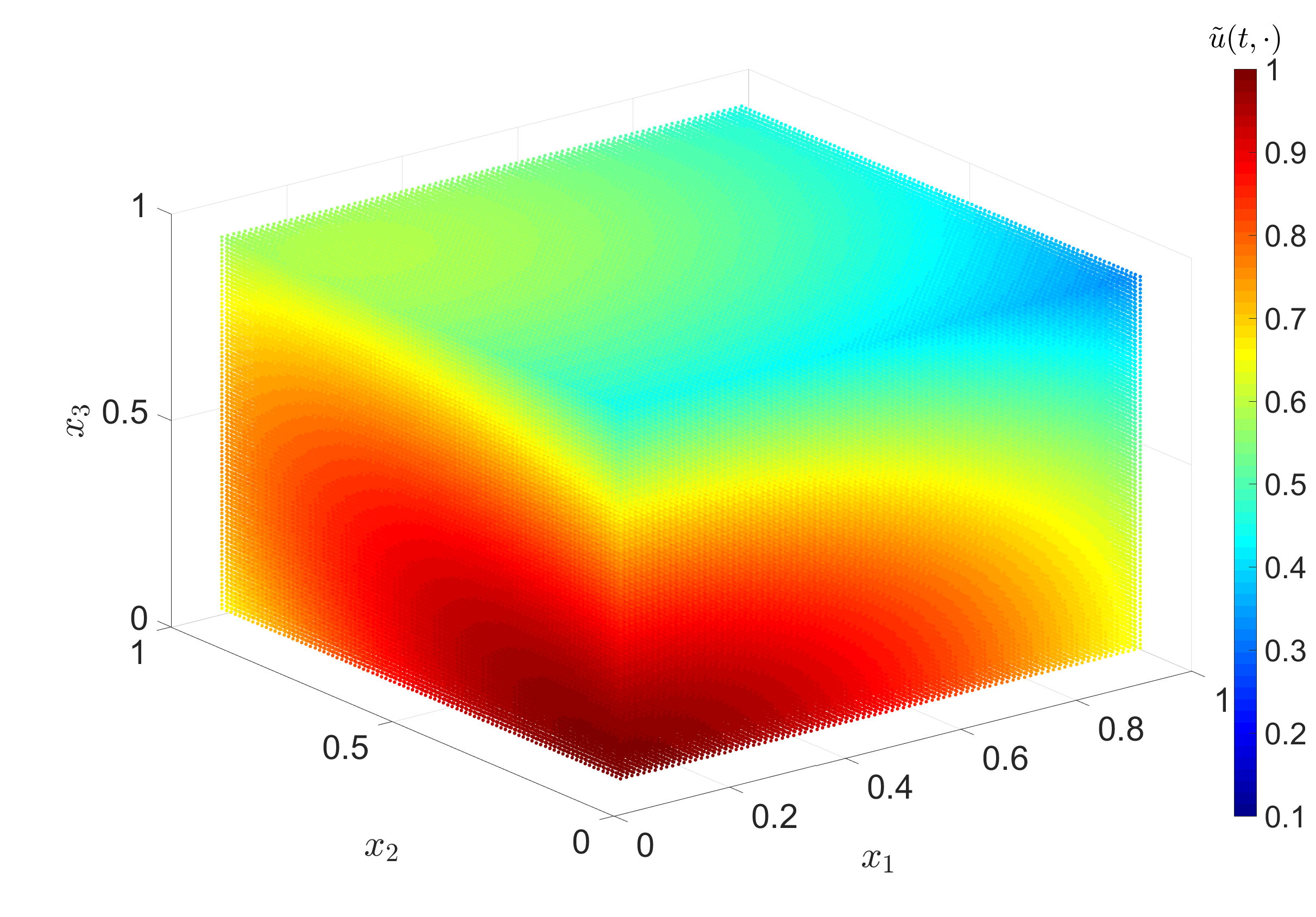}
\includegraphics[width=0.32\textwidth,height=120px]{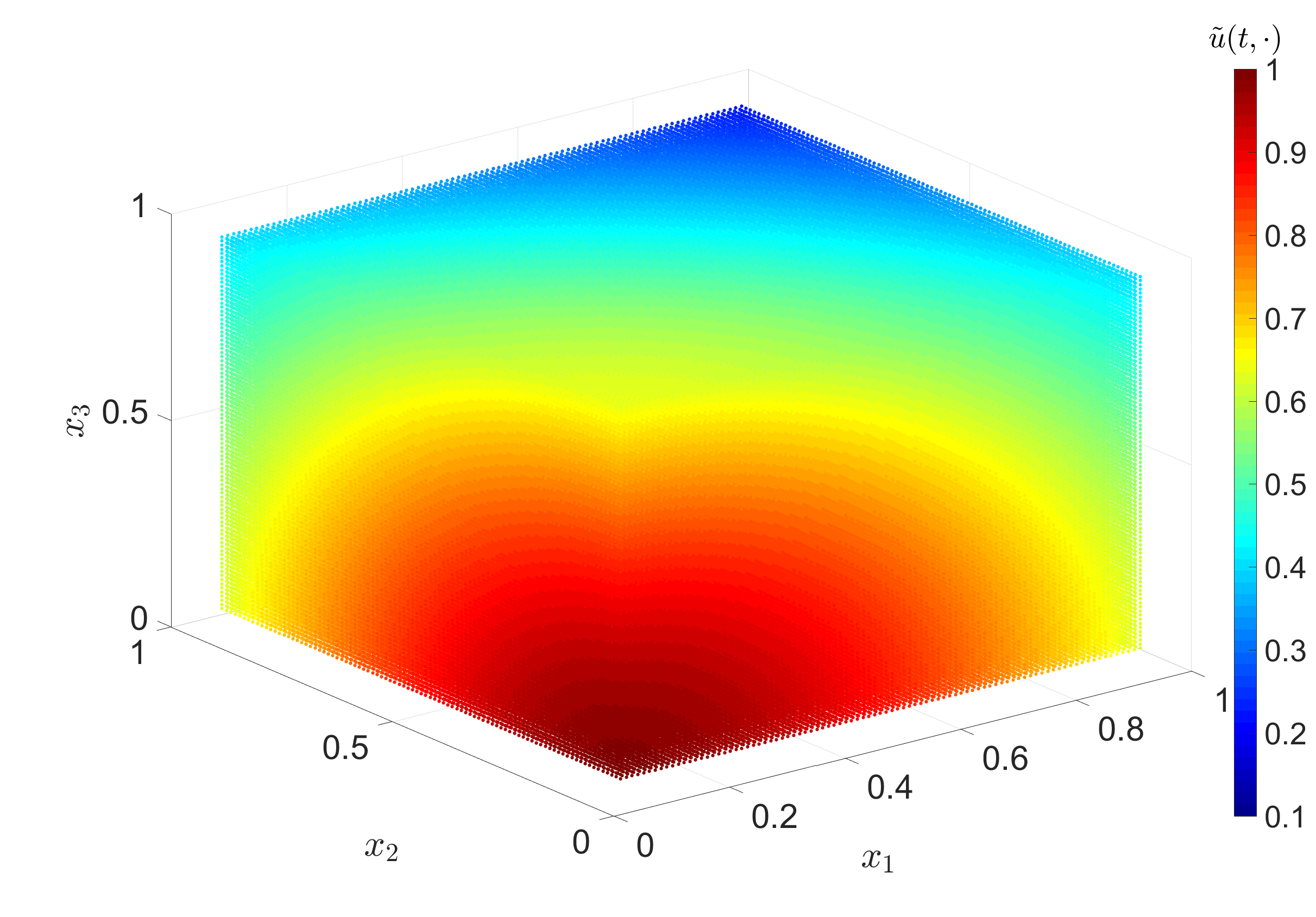}
\caption{The pseudo-regression solution $\tilde u(t,\cdot)$, $t=0, 0.14, 0.43, 0.61, 0.78, 0.99$, of the RPDE driven a path of a fBm. The parameters 
are $K=64$, $h=2^{-9}$, $M=10^6$.}
\label{fig:regression_3D}
\end{figure}
We conclude this section by discussing an alternative to the pseudo-regression, that is the stochastic regression, where an approximation $\hat u(t, \cdot)$ to $u(t, \cdot)$ is derived 
based on (\ref{reg}) instead of (\ref{gass}). Within the stochastic regression the Euler scheme (\ref{eq:simple_euler_scheme}) has to be used only once, whereas we run (\ref{eq:simple_euler_scheme}) 
for every fixed $t$ when computing the pseudo-regression solution $\tilde u(t, \cdot)$. Consequently, $\hat u$ can be computationally cheaper than $\tilde u$ if the Euler method is very expensive in terms of 
 computational time. We determine the solution of the stochastic regression $\hat u$ with the same paprameters as before and compare it with  the exact solution in Figure \ref{fig:error_stoch_regress_3D}.
In this context, we modify our basis, i.e., we use $\tilde\psi_i=g \psi_i$, where $\psi_i$ are again Legendre polynomials ($i=1, \ldots, K$). This compensation is required since $\psi_i$ takes very large values 
outside $[0, 1]^3$. Now, we evaluate the basis functions at samples of the solution to (\ref{eq:Stra-SDE}) in order to compute $\mathcal{M}^{(t)}$ in (\ref{reg}).
Since the paths of the solution to (\ref{eq:Stra-SDE}) leave $[0, 1]^3$ quite frequently, we would encounter a very large variance and hence a large error when using the non-modified basis. 
With the basis $(\tilde\psi_i)_{i=1, \ldots, K}$ we see that the error in Figure \ref{fig:error_stoch_regress_3D} is relatively small but it is clearly larger than the error of the pseudo-regression in
Figure \ref{fig:error_regress_3D}.
\begin{figure}[h]
\centering
\includegraphics[width=0.85\textwidth,height=120px]{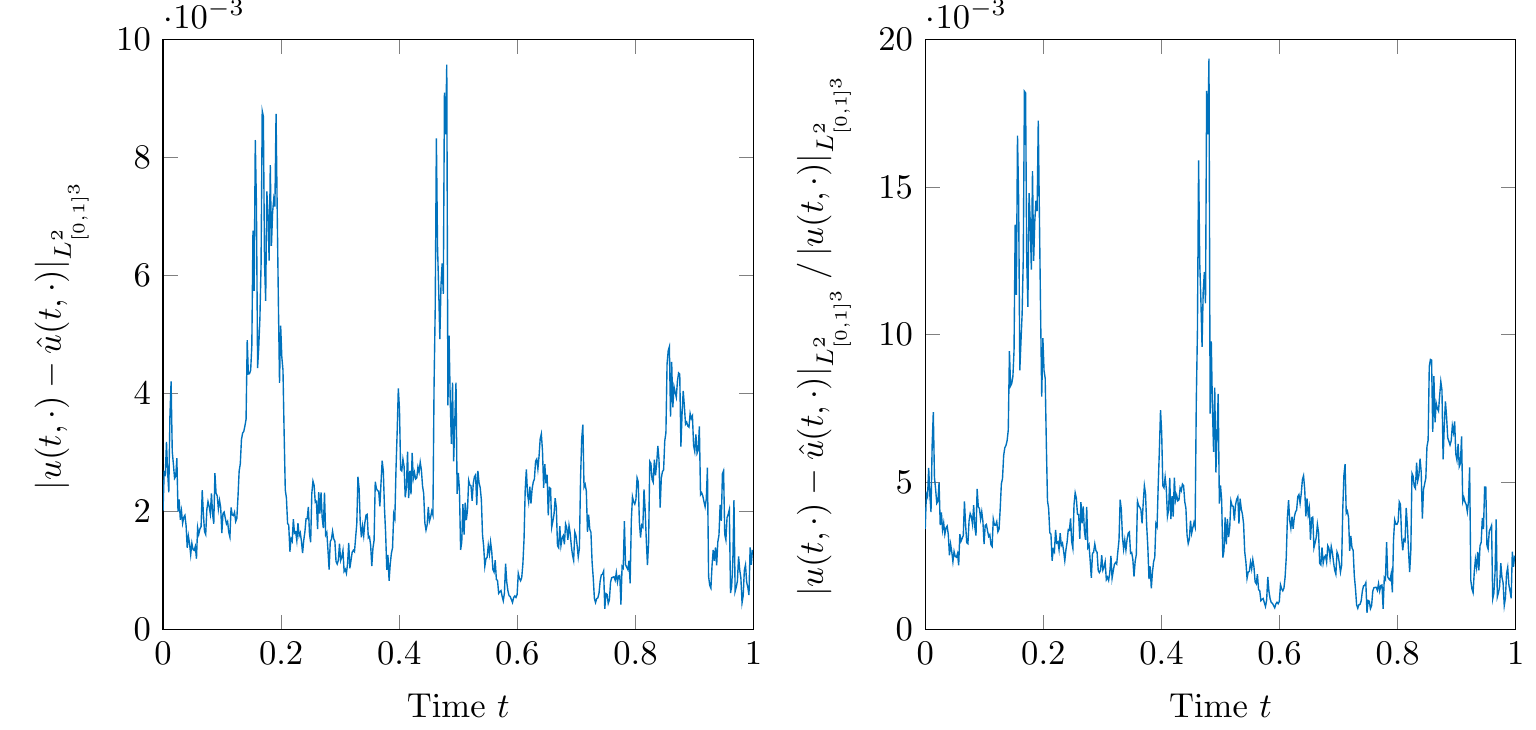}
\caption{Absolute and relative error between the RPDE with coefficients as in (\ref{lin_coef_3D}) and the stochastic regression solution $\hat u$. 
The parameters are as in Figure \ref{fig:error_regress_3D}, i.e., $K=64$, $M=10^6$ and $h=2^{-9}$.}
\label{fig:error_stoch_regress_3D}
\end{figure}

\subsection{Numerical example with non-linear vector fields}
\label{sec:non-linear-example}
We conclude the numerical section with an example which has no reference solution. We start with a similar setting as in Section \ref{sec:linear-example}, i.e.,
we assume that $c, \gamma\equiv 0$. Hence, the solution of the underlying RPDE is given by the expected value in (\ref{eq:special_feynman_kac}) but here non-linear vector fields
enter equation (\ref{eq:Stra-SDE}). We define them as follows: \begin{equation}\label{non-lin_coef}
   b(x)=\left(\begin{smallmatrix} \sin(x_1+2x_2)\\ \sin(2x_1+x_2) \end{smallmatrix}\right),\;\;\;\sigma(x)=\left(\begin{smallmatrix} x_1+\sin(x_2)\\ 2x_1+0.5\cos(x_2) \end{smallmatrix}\right),
  \;\;\;  \beta(x)=\left(\begin{smallmatrix} 0.3\cos(x_1+x_2)&0.2(x_1+x_2)\\ \sin(x_2)-0.5 x_1& \sin(x_1 x_2)-0.5x_2\end{smallmatrix}\right),    
\end{equation}
where we suppose to have a scalar Brownian motion $B$ ($m=1$), a two-dimensional spaces variable $x=\left(\begin{smallmatrix} x_1\\ x_2\end{smallmatrix}\right)\in\mathbb R^2$
as well as a two dimensional rough path $\mathbf W$ ($n=d=2$). Moreover, the terminal
time and the terminal value of the RPDE are $T=1$ and $g(x)=\exp\left(-0.5 \left\|x\right\|^2\right)$, respectively. We fix the probability measure $\mu$ like in 
Subsection \ref{sec:experiment_commute} such that the ONB is again represented by Legendre polynomials on $[0, 1]^2$. We apply the pseudo-regression approach to this case and illustrate 
the resulting solution $\tilde u$ in Figure \ref{fig:non_lin_regres} for $K=36, h=2^{-8}$ and $M=10^6$. Although there is no reference solution to determine the exact error, 
we expect the approximation to be good because $\tilde u$ is relatively flat in space and does not show an extreme behavior like in Figure \ref{fig:exact_W1}.
\begin{figure}[h]
\centering
\includegraphics[width=0.85\textwidth,height=120px]{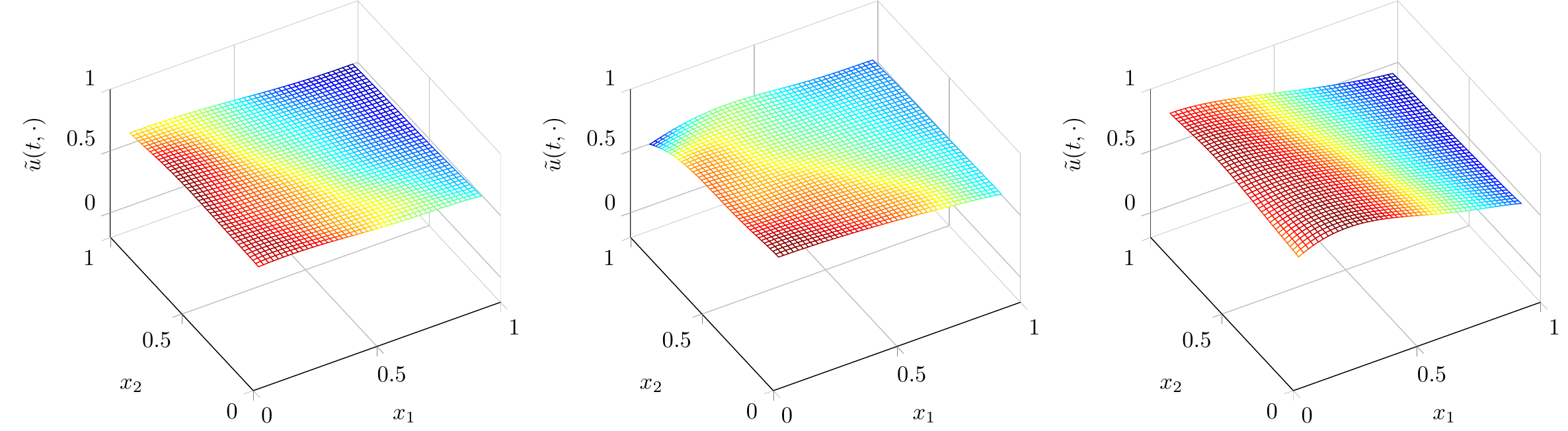}
\includegraphics[width=0.85\textwidth,height=120px]{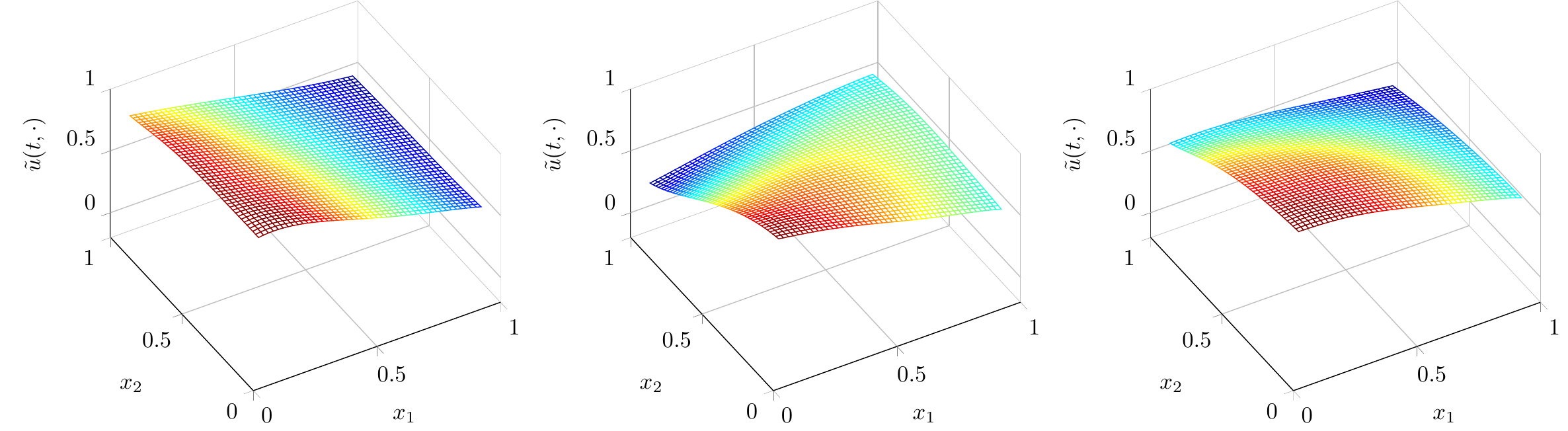}
\caption{The pseudo-regression solution $\tilde u(t,\cdot)$, $t=0, 0.33, 0.57, 0.73, 0.87, 0.99$, of the RPDE (vector fields as in (\ref{non-lin_coef})) driven by a path of a fBm with $H=0.4$.
The regression parameters are $K=36, h=2^{-8}$ and $M=10^6$.}
\label{fig:non_lin_regres}
\end{figure}


\appendix

  \appendix
  
  \section{Proof of Theorem~\ref{thm:bounds_flow_deriv}}
\label{sec:proof-theor-refthm:b}
  
  \subsection{Controlled $p$-variation paths}
  
Recall that a function 
\begin{align*}
 \omega \colon \{0 \leq s \leq t \leq T\} \to [0,\infty)
\end{align*}
is called $\emph{control}$ if it is continuous, $\omega(t,t) = 0$ for all $t \in [0,T]$ and if it is superadditive, i.e.
\begin{align*}
 \omega(s,u) + \omega(u,t) \leq \omega(s,t)
\end{align*}
for all $s \leq u \leq t$. Examples of a control include $\omega(s,t) = |t - s|$ or $\omega(s,t) = \| \mathbf{Z} \|_{p-\text{var};[s,t]}^p$ provided $\mathbf{Z}$ is a $p$-rough path. We say that the \emph{$p$-variation of $\mathbf{Z}$ is controlled by a control function $\omega$} if $| \mathbf{Z}_{s,t} |^p \leq \omega(s,t)$ holds for all $s \leq t$. For arbitrary control functions, the quantity $N_{\alpha}(\omega;[s,t])$ is defined exactly as in \eqref{eq:N} by replacing $\| \mathbf{Z} \|_{p-\text{var};[u,v]}^p$ by $\omega(u,v)$.

The following definition generalizes the notion of a \emph{controlled path} from H\"older- to $p$-variation rough paths.
\begin{definition}
 Let $U$ and $W$ be normed spaces. Let $Z \colon [0,T] \to U$ be a path whose $p$-variation is controlled by a control function $\omega$. We say that a path $y \colon [0,T] \to W$ is \emph{controlled by $Z$ and $\omega$} if there exists a path $y' \colon [0,T] \to L(U,W)$ whose $p$-variation is controlled by $\omega$ so that for $R^y$ given implicitly by the relation
 \begin{align*}
  y_{s,t} = y'_s Z_{s,t} + R^y_{s,t}, 
 \end{align*}
 we have
 \begin{align*}
  \| R^y \|_{p/2-\omega; [0,T]} \coloneqq \sup_{0 \leq s < t \leq T} \frac{| R^{y}_{s,t} |}{\omega(s,t)^{\frac{2}{p}}} < \infty.
 \end{align*}
 We will usually not explicitly mention the control $\omega$ and just say that $y$ is controlled by $Z$. We denote by $\mathscr{D}_{Z}^p([0,T],W)$ the space of controlled $p$-rough paths. We will call a function $y'$ with the given property a \emph{Gubinelli-derivative} of $y$ (with respect to $Z$).

\end{definition}
It is an (admittedly lengthy) exercise to show that all classical estimates proven for H\"older rough paths can be generalized to $p$-rough paths and their controlled functions in the sense above for $p \in [2,3)$ when replacing $|t-s|$ by $\omega(s,t)$ in these estimates. Indeed, the results follow by using an appropriate version of the \emph{Sewing Lemma} for control functions which was proven recently, even for discontinuous control functions, in \cite[Theorem 2.2]{FZ17}. For instance, the corresponding results for rough integrals are summarized in the following theorem.

\begin{theorem}\label{thm:rough_integral}
  Let $U,W,\hat{W}$ be finite dimensional vector spaces and $\mathbf{Z} = (Z, \mathbb{Z})$ be a $p$-rough path with $p$-variation controlled by $\omega$, $p \in [2,3)$. Let $y \in \mathscr{D}_{Z}^p([0,T],L(W,\hat{W}))$ and $z \in \mathscr{D}_{Z}^p([0,T],W)$. Then
  \begin{align*}
   \int_s^t y_u\, dz_u = (\mathcal{I} \Xi)_{s,t}\ ,  \qquad \Xi_{u,v} = y_u z_{u,v} + y'_u z'_u \mathbb{Z}_{u,v}
  \end{align*}
  exists as an abstract integral (cf. \cite[Lemma 4.2 and p. 49 eq. (4.6)]{FH14}), and there is a constant $C$ depending only on $p$ such that the estimate
  \begin{align*}
   &\left| \int_s^t y_u\, dz_u - y_s z_{s,t} - y'_s z'_s \mathbb{Z}_{s,t} \right| \\
   &\quad \leq C \left(\|x\|_{p-\omega;[s,t]} \|R^z\|_{p/2-\omega;[s,t]} + \|\mathbb{Z}\|_{p/2-\omega;[s,t]} \|y' z' \|_{p-\omega;[s,t]} \right) \omega(s,t)^{3/p}
  \end{align*}
  holds for every $s < t$. In particular, the map $t \mapsto \int_0^t y_u\, dz_u$ is itself a controlled $p$-rough path, both controlled by $z$ with derivative $y$, and by $Z$ with derivative $y z'$. 
  
  \end{theorem}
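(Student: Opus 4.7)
The plan is to apply the Sewing Lemma for control functions (\cite[Theorem 2.2]{FZ17}, as referenced just above the statement) to the two-parameter germ
\begin{equation*}
\Xi_{u,v} := y_u z_{u,v} + y'_u z'_u \mathbb{Z}_{u,v},
\end{equation*}
and then read off both the quantitative bound and the controlled-path structure of the resulting integral. Since $p \in [2,3)$, we have $3/p > 1$, so once the key cochain estimate $|\delta\Xi_{s,u,t}| \lesssim \omega(s,t)^{3/p}$ is in place, the sewing lemma immediately yields a unique additive map $(s,t) \mapsto (\mathcal{I}\Xi)_{s,t}$ with $|(\mathcal{I}\Xi)_{s,t} - \Xi_{s,t}| \lesssim \omega(s,t)^{3/p}$, which we \emph{define} to be $\int_s^t y_u\, dz_u$.

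The heart of the proof is the computation of $\delta\Xi_{s,u,t} := \Xi_{s,t} - \Xi_{s,u} - \Xi_{u,t}$ for $s < u < t$. I would proceed by first applying $z_{s,t} = z_{s,u} + z_{u,t}$ together with Chen's relation $\mathbb{Z}_{s,t} = \mathbb{Z}_{s,u} + \mathbb{Z}_{u,t} + Z_{s,u} \otimes Z_{u,t}$, which collapses $\delta\Xi_{s,u,t}$ to
\begin{equation*}
\delta\Xi_{s,u,t} = -y_{s,u}\, z_{u,t} - (y'_u z'_u - y'_s z'_s)\,\mathbb{Z}_{u,t} + y'_s z'_s\,(Z_{s,u} \otimes Z_{u,t}).
\end{equation*}
Next, I would insert the controlled-path expansions $y_{s,u} = y'_s Z_{s,u} + R^y_{s,u}$ and $z_{u,t} = z'_u Z_{u,t} + R^z_{u,t}$ into the first term. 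The leading piece $y'_s z'_u (Z_{s,u} \otimes Z_{u,t})$ cancels against $y'_s z'_s (Z_{s,u} \otimes Z_{u,t})$ up to the increment $(y' z')_{s,u}$ applied to $\mathbb{Z}_{u,t}$, which absorbs the second term of $\delta\Xi$. What remains are cross terms involving $R^y_{s,u} z_{u,t}$, $y'_s Z_{s,u} R^z_{u,t}$, and the $(y'z')$-increment against $\mathbb{Z}_{u,t}$. Bounding each of these by the assumed seminorms and using superadditivity of $\omega$ yields the bound $C(\|Z\|_{p-\omega}\|R^z\|_{p/2-\omega} + \|\mathbb{Z}\|_{p/2-\omega}\|y'z'\|_{p-\omega})\,\omega(s,t)^{3/p}$ matching the statement.

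The main obstacle will be the bookkeeping: a naive expansion produces several seminorm products (e.g.\ $\|R^y\|_{p/2-\omega}\|z'\|_\infty\|Z\|_{p-\omega}$, $\|y'\|_\infty\|Z\|_{p-\omega}\|R^z\|_{p/2-\omega}$, $\|R^y R^z\|$ pieces), and one has to identify/absorb these into the two clean products appearing in the theorem. This is exactly the $p$-variation analogue of the H\"older computation in \cite[Theorem 4.10]{FH14}; the only new ingredient is that $|t-s|^{1/p}$ is replaced throughout by $\omega(s,t)^{1/p}$ and superadditivity plays the role of additivity of $(t-s)$.

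Finally, to recognize the integral as itself a controlled path in two ways, I would simply rewrite the sewing bound
\begin{equation*}
\left|\int_s^t y_u\,dz_u - y_s z_{s,t} - y'_s z'_s \mathbb{Z}_{s,t}\right| \lesssim \omega(s,t)^{3/p}
\end{equation*}
in the two natural forms. Reading it as $\int_s^t y_u\,dz_u = y_s z_{s,t} + \bigl[y'_s z'_s \mathbb{Z}_{s,t} + O(\omega^{3/p})\bigr]$ exhibits the integral as controlled by $z$ with Gubinelli derivative $y$ and remainder of order $\omega^{2/p}$ (using $\|\mathbb{Z}\|_{p/2-\omega} < \infty$). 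Substituting $z_{s,t} = z'_s Z_{s,t} + R^z_{s,t}$ into the $y_s z_{s,t}$ piece rewrites the whole expression as $\int_s^t y_u\,dz_u = (y_s z'_s) Z_{s,t} + O(\omega^{2/p})$, so the integral is also controlled by $Z$ with derivative $y z'$. Both statements are immediate from the sewing bound plus the controlled-path assumption on $z$, so no further work is needed once the cochain estimate is secured.
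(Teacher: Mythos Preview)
Your approach is correct and is exactly what the paper intends: the paper's own proof is the one-line remark ``A combination of \cite[Theorem 4.10 and Remark 4.11]{FH14} generalized to $p$-rough paths,'' and you have spelled out precisely that generalization---applying the Sewing Lemma for control functions to the germ $\Xi_{u,v}=y_uz_{u,v}+y'_uz'_u\mathbb{Z}_{u,v}$, bounding $\delta\Xi$ by $\omega(s,t)^{3/p}$ via Chen's relation and the controlled-path expansions, and reading off the two Gubinelli-derivative interpretations from the sewing remainder. Your caution about the bookkeeping is also well placed, since the naive $\delta\Xi$-computation does produce additional cross terms (e.g.\ involving $\|R^y\|_{p/2-\omega}$ and $\|z'\|_{p-\omega}$) beyond the two displayed products; this is consistent with the statement being a condensed version of the full Remark 4.11 bound.
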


  \begin{proof}
   A combination of \cite[Theorem 4.10 and Remark 4.11]{FH14} generalized to $p$-rough paths.
  \end{proof}

  In the next lemmas, we prepare some estimates for rough integrals and
  solutions to rough differential equations we are going to use at the end of
  this section. In the following, $U,W,\hat{W},W_1,W_2, \ldots$ will denote
  finite dimensional vector spaces, and $\mathbf{Z}$ will be a fixed weakly geometric $p$-rough
  path, $p \in [2,3)$, with values in $U \oplus (U \otimes U)$, 
 controlled by a control function $\omega$. $C \geq 0$ will denote a generic constant whose actual value may change from line to line and which might depend on the parameters specified before.
  \begin{lemma}\label{lemma:rem_RDE_sol}
   Let $V \colon W \to L(U,W)$ and let $y \colon [0,T] \to W$ be a solution to
   \begin{align}\label{eqn:mult_RDE}
    y_t = x + \int_0^t V(y_s)\, d\mathbf{Z}_s.
   \end{align}
   Assume that 
   \begin{align*}
    \|y \|_{p-\omega} \vee \|\mathbf{Z} \|_{p-\omega} \leq 1 \quad \text{and} \quad \| V \|_{\mathcal{C}^2_b} \leq 1.
   \end{align*}
   Then there are constants $C$ and $\alpha$ depending only on $p$ such that
   \begin{align*}
    \| R^y \|_{p/2-\omega} \leq C (1 + N_{\alpha}( \omega ; [0,T])).
   \end{align*}

  \end{lemma}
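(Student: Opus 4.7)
Since $y$ satisfies the RDE \eqref{eqn:mult_RDE}, it is a controlled path with Gubinelli derivative $y'=V(y)$, so by definition $R^y_{s,t}=y_{s,t}-V(y_s)Z_{s,t}$. My plan is to first extract a local bound on $R^y$ from the Sewing Lemma (Theorem~\ref{thm:rough_integral}), and then to glue these local bounds across the greedy partition using a Chen-type identity for $R^y$, in the spirit of Cass--Litterer--Lyons.

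For the local estimate, apply Theorem~\ref{thm:rough_integral} to $\int V(y_u)\,d\mathbf{Z}_u$, viewed as the integral of the controlled path $V(y)$ (with Gubinelli derivative $DV(y)V(y)$) against $z=Z$ (trivially controlled with derivative $\mathrm{Id}$ and $R^z=0$). The first term of the sewing error vanishes because $R^z=0$, while the second is controlled by $\|\mathbb{Z}\|_{p/2-\omega}\le 1$ and $\|DV(y)V(y)\|_{p-\omega}\le C\|V\|_{\mathcal{C}^2_b}^2\|y\|_{p-\omega}\le C$. This yields
\begin{equation*}
R^y_{s,t}=DV(y_s)V(y_s)\,\mathbb{Z}_{s,t}+E_{s,t},\qquad |E_{s,t}|\le C\,\omega(s,t)^{3/p},
\end{equation*}
and hence $|R^y_{s,t}|\le C\,\omega(s,t)^{2/p}+C\,\omega(s,t)^{3/p}$. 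Choosing $\alpha\le 1$, on every interval $[\tau_i,\tau_{i+1}]$ of the greedy partition one has $\omega(s,t)\le\alpha\le 1$, so $\omega(s,t)^{3/p}\le\omega(s,t)^{2/p}$ and the local bound collapses to $|R^y_{s,t}|\le C\,\omega(s,t)^{2/p}$.

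For the global bound, I would derive the Chen-type identity
\begin{equation*}
R^y_{s,t}=R^y_{s,u}+R^y_{u,t}+(V(y_u)-V(y_s))Z_{u,t},
\end{equation*}
directly from $y_{s,t}=y_{s,u}+y_{u,t}$ and $Z_{s,t}=Z_{s,u}+Z_{u,t}$. Given $s\le t$ in $[0,T]$, let $s=s_0<s_1<\cdots<s_N=t$ consist of $s$, $t$, and all greedy times $\tau_k\in(s,t)$, so that $N\le N_\alpha(\omega;[s,t])+2$. Iterating the identity $N-1$ times gives
\begin{equation*}
R^y_{s,t}=\sum_{k=0}^{N-1}R^y_{s_k,s_{k+1}}+\sum_{k=1}^{N-1}(V(y_{s_k})-V(y_{s_{k-1}}))Z_{s_k,t}.
\end{equation*}
Each $|R^y_{s_k,s_{k+1}}|\le C\omega(s_k,s_{k+1})^{2/p}\le C\omega(s,t)^{2/p}$ by superadditivity of $\omega$. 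Each cross term is bounded by $\|DV\|_\infty\omega(s_{k-1},s_k)^{1/p}\omega(s_k,t)^{1/p}\le C\,\omega(s,t)^{2/p}$, again by superadditivity. Summing and dividing by $\omega(s,t)^{2/p}$ yields $|R^y_{s,t}|\le C\,(N_\alpha(\omega;[s,t])+1)\,\omega(s,t)^{2/p}$, and taking the supremum gives the claim.

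The only real subtlety is the bookkeeping in the iteration to ensure that the constants depend only on $p$ and that the $N_\alpha$ factor appears linearly (rather than, say, polynomially); in particular one must verify that applying the local Sewing estimate to intervals of length $\alpha$ uses only quantities that are bounded by our assumptions $\|V\|_{\mathcal{C}^2_b},\|y\|_{p-\omega},\|\mathbf{Z}\|_{p-\omega}\le 1$, and nothing involving $\|R^y\|_{p/2-\omega}$ itself (which would risk a circular dependence). Once this is checked, the chain of estimates is essentially mechanical.
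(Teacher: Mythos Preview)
Your overall architecture (local bound on small intervals, then Chen-type gluing along the greedy partition) is exactly the paper's, and your gluing step is correct — you just telescope slightly differently, producing cross terms $(V(y_{s_k})-V(y_{s_{k-1}}))Z_{s_k,t}$ instead of the paper's $(V(y_{\tau_k})-V(y_s))Z_{\tau_k,\tau_{k+1}}$; both are bounded by $C\omega(s,t)^{2/p}$ and give the same linear factor $N_\alpha$.

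The gap is in the local estimate. You read Theorem~\ref{thm:rough_integral} literally and conclude that, since $R^z=0$ when $z=Z$, the sewing error $E_{s,t}$ is bounded by $C\omega(s,t)^{3/p}$ with $C$ independent of $R^y$. But that reading relies on the typo ``$\|x\|_{p-\omega}$'' in the statement; the correct bound for $\int_s^t V(y_u)\,d\mathbf{Z}_u-V(y_s)Z_{s,t}-DV(y_s)V(y_s)\mathbb{Z}_{s,t}$ (compare \cite[Theorem~4.10]{FH14}) involves $\|R^{V(y)}\|_{p/2-\omega;[s,t]}$, and $R^{V(y)}_{s,t}=DV(y_s)R^y_{s,t}+O(\|y\|_{p-\omega}^2\omega(s,t)^{2/p})$. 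So $\|R^y\|$ \emph{does} appear on the right, and the ``check'' you flag at the end actually fails. The paper's own proof confirms this: it writes $|R^y_{s,t}|\le C\alpha^{1/p}\omega(s,t)^{2/p}\bigl(\|R^y\|_{p/2-\omega;\alpha}+\|DV(y)V(y)\|_{p-\omega}\bigr)+\omega(s,t)^{2/p}$ and then closes the circularity by an absorption argument — choosing $\alpha$ so that $C\alpha^{1/p}\le\tfrac12$ and moving the $\|R^y\|_{p/2-\omega;\alpha}$ term to the left to obtain $\|R^y\|_{p/2-\omega;\alpha}\le 2C$. Once you insert this absorption step in place of your direct local bound, your argument coincides with the paper's.
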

  
  \begin{proof}
   For $\alpha > 0$, set 
   \begin{align*}
    \| R^y \|_{p/2-\omega; \alpha} := \sup_{0 \leq s < t \leq T; |t-s| \leq \alpha} \frac{|R^y_{s,t}|}{\omega(s,t)^{2/p}}.
   \end{align*}
   Choose $s < t$ such that $\omega(s,t) \leq \alpha$. Then we have, using the estimate in Theorem \ref{thm:rough_integral},
   \begin{align*}
    |R^y_{s,t}| &= \left| \int_s^t  V(y_u)\, d\mathbf{Z}_u - V(y_s) Z_{s,t} \right| \\
    &\leq \left| \int_s^t  V(y_u)\, d\mathbf{Z}_u - V(y_s) Z_{s,t} -  D V(y_s) V(y_s) \mathbb{Z}_{s,t}  \right| + |D V(y_s) V(y_s) \mathbb{Z}_{s,t}| \\
    &\leq C \alpha^{1/p} \omega(s,t)^{2/p} ( \| R^y \|_{p/2-\omega; \alpha} + \| D V(y_{\cdot}) V(y_{\cdot}) \|_{p-\omega} ) +  \omega(s,t)^{2/p}.
   \end{align*}
   Using boundedness of $V$ and its derivatives, one can check that
   \begin{align*}
    \| D V(y_{\cdot}) V(y_{\cdot}) \|_{p-\omega} \leq C \| y \|_{p-\omega} \leq C.
   \end{align*}
   Hence we obtain
   \begin{align*}
    \| R^y \|_{p/2-\omega; \alpha} \leq  C \alpha^{1/p}\| R^y \|_{p/2-\omega; \alpha} + C.
   \end{align*}
   Choosing $\alpha$ such that $C \alpha^{1/p} \leq 1/2$, we obtain
   \begin{align*}
    \| R^y \|_{p/2-\omega; \alpha} \leq 2C.
   \end{align*}
   Now choose $(\tau_n)$ such that $0 = \tau_0 < \tau_1 < \ldots < \tau_N < \tau_{N+1} = T$ with $\omega(\tau_i, \tau_{i+1}) \leq \alpha$ and $N =  N_{\alpha}( \omega ; [0,T])$. Let $s < t$ be arbitrary. Choose $i$ and $j$ such that $s \in [\tau_{i-1 }, \tau_{i})$ and $ t\in (\tau_{j }, \tau_{j+1}]$. Then
   \begin{align*}
    \frac{|R^y_{s,t}|}{\omega(s,t)^{2/p}} &\leq \omega(s,t)^{- 2/p} \big( |R^y_{s,\tau_{i}}| + |R^y_{\tau_i,\tau_{i+1}}| + \ldots + |R^y_{\tau_j,t}| \\
    &\quad + |V(y_{\tau_i}) - V(y_s)||Z_{\tau_i,\tau_{i+1}}| + \ldots + |V(y_{\tau_j}) - V(y_s)||Z_{\tau_j,t}|    \big) \\
    &\leq \frac{|R^y_{s,\tau_{i}}|}{\omega(s,\tau_i)^{2/p}} + \ldots + \frac{|R^y_{\tau_j,t}|}{\omega(\tau_j,t)^{2/p}} + \frac{|V(y_{\tau_i}) - V(y_s)||Z_{\tau_i,\tau_{i+1}}|}{\omega(s,\tau_i)^{1/p} \omega(\tau_i,\tau_{i+1})^{1/p}} + \ldots + \frac{|V(y_{\tau_j}) - V(y_s)||Z_{\tau_j,t}|}{\omega(s,\tau_j)^{1/p} \omega(\tau_j,t)^{1/p}} \\
    &\leq 2C( N_{\alpha}( \omega ; [0,T]) + 1) + C ( N_{\alpha}( \omega ; [0,T]) + 1).\qedhere
   \end{align*}
  \end{proof}

  \begin{lemma}\label{lemma:est_rough_integral}
   Let $y$ be a solution to \eqref{eqn:mult_RDE}. Consider
   \begin{align*}
    \zeta_t = \zeta_0 + \int_0^t \nu(y_s)(d\mathbf{Z}_s)z_s \in L(\hat{W}, W)
   \end{align*}
   where $\nu \colon W \to L(U, L(W_1, W))$ is bounded, twice differentiable with bounded derivatives and $z \colon [0,T] \to L(\hat{W}, W_1)$ is controlled by $Z$. Assume that 
   \begin{align*}
    \|y \|_{p-\omega} \vee \|\mathbf{Z} \|_{p-\omega} \leq 1 \quad \text{and} \quad \| \nu \|_{\mathcal{C}^2_b} \leq 1.
   \end{align*}
   Then there is a constant $C$ and some $\alpha > 0$ depending on $p$ such that
   \begin{align*}
    \|\zeta \|_{p-\omega;[s,t]} &\leq C \omega(s,t)^{2/p} \Big( \|z\|_{\infty;[s,t]}(1 + N_{\alpha}( \omega ; [0,T])) + \|z\|_{\infty;[s,t]} + \|z'\|_{\infty;[s,t]} \\
   &\quad + \|z\|_{p-\omega;[s,t]} + \|z'\|_{p-\omega;[s,t]} + \|R^z\|_{p/2-\omega;[s,t]} \Big)\\
   &\quad + C\omega(s,t)^{1/p}(\|z\|_{\infty;[s,t]} + \|z'\|_{\infty;[s,t]}) + C\|z\|_{\infty;[s,t]}
   \end{align*}
   and
   \begin{align*}
    \| R^{\zeta} \|_{p/2-\omega;[s,t]} &\leq C \omega(s,t)^{1/p} \Big( \|z\|_{\infty;[s,t]}(1 + N_{\alpha}( \omega ; [0,T])) + \|z\|_{\infty;[s,t]} + \|z'\|_{\infty;[s,t]} \\
   &\quad + \|z\|_{p-\omega;[s,t]} + \|z'\|_{p-\omega;[s,t]} + \|R^z\|_{p/2-\omega;[s,t]} \Big) + C(\|z\|_{\infty;[s,t]} + \|z'\|_{\infty;[s,t]})
   \end{align*}
    for all $s \leq t$. 
  \end{lemma}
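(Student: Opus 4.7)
The plan is to write
\[
\zeta_t - \zeta_s = \int_s^t \Xi_u\, d\mathbf{Z}_u, \qquad \Xi_u \coloneqq \nu(y_u)(\cdot)\, z_u \in L\bigl(U, L(\hat W, W)\bigr),
\]
and apply Theorem \ref{thm:rough_integral}. The first step is to verify that $\Xi$ is a controlled $p$-rough path with respect to $\mathbf{Z}$ and to identify its Gubinelli derivative together with the remainder $R^\Xi$. Since $y$ solves \eqref{eqn:mult_RDE}, it is controlled by $\mathbf{Z}$ with $y'_u = V(y_u)$, and $z$ is controlled by hypothesis with derivative $z'$. A second-order Taylor expansion of $\nu$ at $y_s$ combined with $z_{s,t} = z'_s Z_{s,t} + R^z_{s,t}$ and $y_{s,t} = V(y_s) Z_{s,t} + R^y_{s,t}$ will then identify
\[
\Xi'_u = D\nu(y_u)\bigl(V(y_u)\,\cdot\,\bigr)\, z_u + \nu(y_u)(\cdot)\, z'_u,
\]
with the remainder $R^\Xi_{s,t}$ decomposing into four pieces: $\nu(y_s) R^z_{s,t}$, the cross term $D\nu(y_s)(R^y_{s,t})z_s$, the Taylor quadratic term in $y_{s,t}^{\otimes 2}$, and a $Z_{s,t}^{\otimes 2}$--type cross term from multiplying the increments of $\nu(y)$ and $z$. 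This is essentially the $p$-variation analogue of the composition/product rules of \cite[Lemmas 7.3, 7.4]{FH14}, valid here via the Sewing Lemma of \cite{FZ17}.

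The second step is to bound $\|\Xi\|_{p-\omega;[s,t]}$, $\|\Xi'\|_{p-\omega;[s,t]}$, and $\|R^\Xi\|_{p/2-\omega;[s,t]}$. The first two follow straightforwardly from $\|\nu\|_{\mathcal C^2_b}\le 1$ and $\|y\|_{p-\omega}, \|\mathbf{Z}\|_{p-\omega}\le 1$, producing the combinations of sup- and $p$-variation norms of $z$ and $z'$ that will appear on the right-hand side of the claim. The critical ingredient is the bound on $R^\Xi$: the pieces involving $R^z_{s,t}$, the quadratic in $y_{s,t}$, and the $Z_{s,t}^{\otimes 2}$--type cross term can be bounded directly, but the $D\nu(y_s)(R^y_{s,t}) z_s$ contribution forces us to invoke Lemma \ref{lemma:rem_RDE_sol}, which supplies the bound $\|R^y\|_{p/2-\omega}\le C(1+N_\alpha(\omega;[0,T]))$. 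This is precisely where, and the only place where, the $N_\alpha$ factor in the final estimate enters.

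The third step is to feed these bounds into Theorem \ref{thm:rough_integral}, which gives
\[
\left|\int_s^t \Xi_u\, d\mathbf{Z}_u - \Xi_s Z_{s,t} - \Xi'_s \mathbb{Z}_{s,t}\right| \le C \omega(s,t)^{3/p}\bigl(\|R^\Xi\|_{p/2-\omega;[s,t]} + \|\Xi'\|_{p-\omega;[s,t]}\bigr)
\]
(using $\|\mathbf{Z}\|_{p-\omega}, \|\mathbb{Z}\|_{p/2-\omega}\le 1$), and which also asserts that $\zeta$ itself is controlled by $\mathbf{Z}$ with Gubinelli derivative $\Xi$, so that $R^\zeta_{s,t}$ is exactly $\zeta_{s,t} - \Xi_s Z_{s,t}$. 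Adding the leading terms $|\Xi_s Z_{s,t}|\le\|z\|_\infty\,\omega(s,t)^{1/p}$ and $|\Xi'_s\mathbb{Z}_{s,t}|\le(\|z\|_\infty + \|z'\|_\infty)\,\omega(s,t)^{2/p}$, then dividing by $\omega(u,v)^{1/p}$ (respectively $\omega(u,v)^{2/p}$) and taking a supremum over $[u,v]\subset[s,t]$, yields the announced estimates on $\|\zeta\|_{p-\omega;[s,t]}$ and $\|R^\zeta\|_{p/2-\omega;[s,t]}$ after grouping terms by their order in $\omega(s,t)$.

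The main obstacle will be the bookkeeping for $R^\Xi$: the composition of the nonlinear $\nu$ with the controlled path $y$ and the product with $z$ generate several contributions at distinct orders in $\omega(s,t)$, and one must carefully match each term with the correct sup-norm or $p$-variation norm of $z$ or $z'$ as it appears in the claimed inequalities. Everything else reduces to a mechanical application of the controlled-path calculus together with the single quantitative input supplied by Lemma \ref{lemma:rem_RDE_sol}.
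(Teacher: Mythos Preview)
Your proposal is correct and follows essentially the same approach as the paper's proof. Both identify $\Xi = \nu(y)z$ as a controlled path with Gubinelli derivative $D\nu(y)V(y)z + \nu(y)z'$, invoke Lemma~\ref{lemma:rem_RDE_sol} to control the $R^y$ contribution (which is indeed the sole source of the $N_\alpha$ factor), and then feed everything into Theorem~\ref{thm:rough_integral}. The only cosmetic difference is that the paper groups your pieces 2 and 3 of $R^\Xi$ together as $R^{\nu(y_\cdot)}z$ and bounds $\|R^{\nu(y_\cdot)}\|_{p/2-\omega}$ in one step via the composition rule \cite[Lemma~7.3]{FH14}, whereas you decompose this further into the $D\nu(y_s)R^y_{s,t}$ piece and the Taylor-quadratic piece by hand; both routes arrive at the same estimate.
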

  
  \begin{proof}
  Note first that the path $t \mapsto y_t$ is controlled by $Z$, and the path $t \mapsto \nu(y_t)z_t \in L(U, L(\hat{W}, W))$ is controlled by $Z$ as composition with a smooth function \cite[Lemma 7.3]{FH14}. Moreover, its Gubinelli derivative is given by
  \begin{align*}
   (\nu(y_t)z_t)' = (\nu(y_t))'z_t + \nu(y_t)z_t' = D\nu(y_t)(y'_t)z_t + \nu(y_t)z_t' = D\nu(y_t)V(y_t)z_t + \nu(y_t)z_t'
  \end{align*}
  where we used ($p$-variation versions of) \cite[Lemma 7.3]{FH14} in the first and second equality and \cite[Theorem 8.4]{FH14} in the third. 
   We start to prove the claimed estimate for $R^{\zeta}$. The Gubinelli derivative of $\zeta$ is given by $\zeta_t' = \nu(y_t)z_t$ and
  \begin{align*}
   R^{\zeta}_{s,t} = \int_s^t \nu(y_u)(d\mathbf{Z}_u) z_u \,  - \nu(y_s)(z_s) Z_{s,t}.
  \end{align*}
  Hence we can estimate 
  \begin{align*}
   | R^{\zeta}_{s,t} | &\leq \left|\int_s^t \nu(y_u)(d\mathbf{Z}_u )z_u  - \nu(y_s)(z_s) Z_{s,t} - (\nu(y_s)z_s)' \mathbb{Z}_{s,t} \right| + \left|D \nu(y_s) V(y_s)z_s \mathbb{Z}_{s,t} + \nu(y_s)z_s' \mathbb{Z}_{s,t} \right| \\
   &\leq C \left( \|R^{\nu(y_{\cdot})z_{\cdot}}\|_{p/2-\omega;[s,t]} + \| D \nu(y_{\cdot}) V(y_{\cdot}) z_{\cdot} \|_{p-\omega;[s,t]} + \| D \nu(y_{\cdot})  z'_{\cdot} \|_{p-\omega;[s,t]} \right) \omega(s,t)^{3/p} \\
   &\quad + C(\|z\|_{\infty;[s,t]} + \|z'\|_{\infty;[s,t]}) \omega(s,t)^{2/p}
  \end{align*}
  where we used Theorem \ref{thm:rough_integral} and that $V$, $\nu$ and all its derivatives are bounded. We have
  \begin{align*}
   |R^{\nu(y_{\cdot})z_{\cdot}}_{s,t}| &= |\nu(y_t)z_t - \nu(y_s)z_s - (\nu(y_s))'z_s Z_{s,t} - \nu(y_s)z'_s Z_{s,t} | \\
   &\leq |(\nu(y_t) - \nu(y_s) - (\nu(y_s))'Z_{s,t})z_t| + |\nu(y_s)(z_t - z_s - z'_s Z_{s,t})| + |(\nu(y_s))' (z_t - z_s) Z_{s,t}| \\
   &\leq C( \|R^{\nu(y_{\cdot})}\|_{p/2-\omega;[s,t]} \|z\|_{\infty} + \|R^{z}\|_{p/2-\omega;[s,t]} + \|z\|_{p-\omega;[s,t]}) \omega(s,t)^{2/p}
  \end{align*}
  using $(\nu(y_s))' = D\nu(y_s)V(y_s)$ and boundedness of the vector fields an their derivatives.
  As in \cite[Lemma 7.3]{FH14}, we can see that
  \begin{align*}
   \|R^{\nu(y_{\cdot})}\|_{p/2-\omega;[s,t]} &\leq C \left( \|y\|^2_{p-\omega;[s,t]} + \|R^{y}\|_{p/2-\omega;[s,t]} \right) \\
   &\leq C (1 + N_{\alpha}( \omega ; [0,T]))
  \end{align*}
  where the second estimate follows from Lemma  \ref{lemma:rem_RDE_sol}. Thus
  \begin{align*}
   \|R^{\nu(y_{\cdot})z_{\cdot}}\|_{p-\omega;[s,t} \leq C(\|z\|_{\infty}(1 + N_{\alpha}( \omega ; [0,T])) + \|R^{z}\|_{p/2-\omega;[s,t]} + \|z\|_{p-\omega;[s,t]}). 
  \end{align*}
  Using the Lipschitz bounds for $V$, $\nu$ and its derivatives, we can easily see that
  \begin{align*}
   \| D \nu(y_{\cdot}) V(y_{\cdot}) z_{\cdot} \|_{p-\omega;[s,t]} &\leq C( \|z\|_{\infty;[s,t]} + \|z\|_{p-\omega;[s,t]} )\quad \text{and} \\
   \| D \nu(y_{\cdot})  z'_{\cdot} \|_{p-\omega;[s,t]} &\leq C( \|z'\|_{\infty;[s,t]} + \|z'\|_{p-\omega;[s,t]} ).
  \end{align*}
  Therefore, we find that
  \begin{align*}
   \| R^{\zeta} \|_{p/2-\omega;[s,t]} &\leq C \omega(s,t)^{1/p} \big( \|z\|_{\infty;[s,t]}(1 + N_{\alpha}( \omega ; [0,T])) + \|z\|_{\infty;[s,t]} + \|z'\|_{\infty;[s,t]} \\
   &\quad + \|z\|_{p-\omega;[s,t]} + \|z'\|_{p-\omega;[s,t]} + \|R^z\|_{p/2-\omega;[s,t]} \big) + C(\|z\|_{\infty;[s,t]} + \|z'\|_{\infty;[s,t]}).
  \end{align*}
  For $\zeta$, we have
  \begin{align*}
   \frac{|\zeta_{s,t}|}{\omega(s,t)^{1/p}} &\leq \omega(s,t)^{1/p} \frac{|R^{\zeta}_{s,t}|}{\omega(s,t)^{2/p}} + \frac{|\nu(y_s)(z_s) Z_{s,t}|}{\omega(s,t)^{1/p}} \\
   &\leq \omega(s,t)^{1/p} \| R^{\zeta} \|_{p/2-\omega;[s,t]}  + C\|z\|_{\infty;[s,t]}
  \end{align*}
  for all $s < t$ and the claim follows.
  \end{proof}
  
  \begin{lemma}
   Let $A \colon [0,T] \to L(U,L(W,W))$ be controlled by $Z$, and let $\mathbf{Z}$ be weakly geometric. Consider a solution $\Phi \colon [0,T] \to L(W, W)$ to
   \begin{align*}
    \Phi_t = \Phi_0 + \int_0^t A(s)(d\mathbf{Z}_s) \Phi_s \in L(W, W).
   \end{align*}
   Then Liouville's formula holds:
\begin{align*}
 \det(\Phi_t) = \det(\Phi_0) \exp\left(  \operatorname{Tr} \int_0^t A(s)\, d \mathbf{Z}_s \right).
\end{align*}
In particular, if $\det(\Phi_0) \neq 0$, $\Phi_t$ is invertible for every $t \geq 0$. In this case, the inverse $\Psi_t := \Phi_t^{-1}$ solves the equation
   \begin{align*}
    \Psi_t = \Phi_0^{-1} - \int_0^t \Psi_s A(s)(d\mathbf{Z}_s)  \in L(W, W).
   \end{align*}
  \end{lemma}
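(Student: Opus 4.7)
The plan is to handle the two claims in sequence: first the invertibility of $\Phi_t$ via the product rule for geometric rough paths, and then Liouville's formula via the chain rule applied to the determinant.

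\textbf{Step 1 (inverse equation).} First I would define $\Psi$ as the unique solution of the linear rough differential equation
\[
\Psi_t = \Phi_0^{-1} - \int_0^t \Psi_s A(s)(d\mathbf{Z}_s),
\]
which exists globally thanks to linearity of the vector field (standard a priori estimates of the type used in Theorem~\ref{thm:bounds_flow_deriv} prevent blow-up). Next I would compute $\Psi_t \Phi_t$. Because $\mathbf{Z}$ is weakly geometric, the Leibniz rule for controlled paths holds in the same form as classical calculus: applying the composition rule (the $p$-variation version of~\cite[Lemma~7.3]{FH14} already invoked earlier in the paper) to the smooth bilinear map $(U,V)\mapsto UV$ gives $d(\Psi_t\Phi_t)=(d\Psi_t)\Phi_t+\Psi_t(d\Phi_t)$. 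Substituting the two equations, the two terms cancel, so $\Psi_t\Phi_t\equiv \Psi_0\Phi_0=\mathrm{Id}$. The analogous computation for $\Phi_t\Psi_t$ yields $\mathrm{Id}$ as well, hence $\Psi_t = \Phi_t^{-1}$.

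\textbf{Step 2 (Liouville's formula, invertible case).} Assuming $\det(\Phi_0)\neq 0$, Step~1 shows $\Phi_t$ is invertible for every $t$. I would then apply the chain rule for geometric rough paths to the smooth function $F = \det\colon L(W,W)\to\mathbb{R}$, again using the $p$-variation analogue of~\cite[Lemma~7.3]{FH14}. Jacobi's identity $DF(M)\cdot H=\det(M)\operatorname{Tr}(M^{-1}H)$ and cyclicity of the trace give
\[
d\det(\Phi_t) = \det(\Phi_t)\operatorname{Tr}\bigl(\Phi_t^{-1} A(t)(d\mathbf{Z}_t)\Phi_t\bigr) = \det(\Phi_t)\operatorname{Tr}\bigl(A(t)\,d\mathbf{Z}_t\bigr).
\]
This is a scalar linear RDE for $\det(\Phi_t)$ driven by the controlled path $t\mapsto\int_0^t\operatorname{Tr}A(s)\,d\mathbf{Z}_s$, whose solution is the exponential formula claimed in the statement.

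\textbf{Step 3 (singular case and closing).} If $\det(\Phi_0)=0$, I would argue directly: for any $v\in\ker\Phi_0$, uniqueness for the linear RDE satisfied by $t\mapsto\Phi_t v$ (with zero initial condition) forces $\Phi_t v=0$, so $\det(\Phi_t)=0$ for all $t$ and both sides of Liouville's formula vanish. Alternatively, one can perturb $\Phi_0$ to invertible $\Phi_0^{(n)}\to\Phi_0$ and use continuity of the solution map in $p$-variation topology.

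\textbf{Main obstacle.} The genuine technical point is the product/chain rule for controlled paths in the $p$-variation setting with $p\in[2,3)$; for H\"older rough paths this is~\cite[Lemma~7.3]{FH14}, but here one must carefully verify (or cite) its extension to paths controlled by a generic control function $\omega$. Once this machinery is in place—consistent with the framework already developed in Appendix A of the paper via the Sewing Lemma of~\cite{FZ17}—the rest of the argument reduces to the algebraic identities above and the uniqueness of solutions to linear RDEs.
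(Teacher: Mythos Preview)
Your argument is correct, but it follows a genuinely different route from the paper. The paper's proof is a two-line reduction: for smooth $Z$, Liouville's formula and the inverse equation are classical ODE facts (citing Amann), and the general weakly geometric case follows by approximating $\mathbf{Z}$ with smooth rough paths and invoking continuity of the rough integral and of solutions to linear RDEs. In contrast, you work intrinsically inside the rough path framework, defining $\Psi$ directly as an RDE solution and using the Leibniz and chain rules for controlled paths with respect to a weakly geometric driver to verify $\Psi_t\Phi_t=\mathrm{Id}$ and to differentiate $\det(\Phi_t)$ via Jacobi's formula. Your approach has the advantage of avoiding any approximation argument and making precise which algebraic identities are being used; the paper's approach is shorter and offloads all the work to the smooth theory plus the stability of rough integration under smooth approximation, which is already part of the standard toolkit once one has a weakly geometric rough path. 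The ``main obstacle'' you identify is exactly right: the only real cost of your method is having the $p$-variation product/chain rule in hand, whereas the paper sidesteps this by passing through smooth paths.
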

  
  \begin{proof}
   Assume first that $Z$ is smooth. In this case, Liouville's formula is well-known, cf. \cite[(11.4) Proposition]{Ama90}. The statement about $\Psi$ follows from the identity
\begin{align*}
 \frac{d \Phi_t^{-1}}{dt} = - \Phi_t^{-1}  \frac{ d\Phi_t}{dt} \Phi_t^{-1},
\end{align*}
which is true for matrices depending smoothly on $t$. The general case follows by approximation of $\mathbf{Z}$ with smooth rough paths and continuity of the rough integral.
  \end{proof}

  
  \begin{lemma}\label{lemma:jabobian_estimates}
   Let $y$ be a solution to \eqref{eqn:mult_RDE}. Consider a solution $\Phi \colon [0,T] \to L(W, W)$ to
   \begin{align*}
    \Phi_t = \Phi_0 + \int_0^t \nu(y_u)(d\mathbf{Z}_u) \Phi_u \in L(W, W)
   \end{align*}
   where $\nu \colon W \to L(U, L(W, W))$ is bounded, twice differentiable and has bounded derivatives. Assume that  
   \begin{align*}
    \|y \|_{p-\omega} \vee \|\mathbf{Z} \|_{p-\omega} \leq 1 \quad \text{and} \quad \| \nu \|_{\mathcal{C}^2_b} \leq 1.
   \end{align*}
   Then $\Phi$ is controlled by $Z$ and there are constants $C$ and $\alpha > 0$ depending on $p$ such that
   \begin{align*}
    \| R^{\Phi} \|_{p/2-\omega} &\leq C(1 + |\Phi_0|)\exp(C N_{\alpha}( \omega ; [0,T])) \quad \text{and} \\
    \| \Phi \|_{\infty} + \| \Phi \|_{p-\omega} + \| \Phi' \|_{\infty} + \| \Phi' \|_{p-\omega} &\leq C(1 + |\Phi_0|)\exp(C N_{\alpha}( \omega ; [0,T])).
   \end{align*}
   The same estimate holds true for any solution $\Psi \colon [0,T] \to L(W, W)$ to
   \begin{align*}
    \Psi_t = \Psi_0 - \int_0^t \Psi_u \nu(y_u)(d\mathbf{Z}_u)  \in L(W, W)
   \end{align*}
   when we replace $\Phi_0$ by $\Psi_0$.
  \end{lemma}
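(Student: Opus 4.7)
The strategy is to exploit the linearity of the equation for $\Phi$ and to concatenate local estimates coming from Lemma \ref{lemma:est_rough_integral} applied with $z := \Phi$. First I would verify that $\Phi$ is a controlled $p$-rough path with Gubinelli derivative $\Phi' = \nu(y)\Phi$: this follows because $\nu(y_\cdot)$ is a controlled path (as a smooth function of $y$, via the $p$-variation analogue of \cite[Lemma 7.3]{FH14}), the product of two controlled paths is again controlled, and Theorem \ref{thm:rough_integral} then yields both existence of the rough integral and identification of the correct Gubinelli derivative of $\Phi$.

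The main step is a local-to-global argument based on the greedy partition $0 = \tau_0 < \tau_1 < \ldots < \tau_{N+1} = T$ with $\omega(\tau_i, \tau_{i+1}) \leq \alpha$, where $\alpha>0$ is to be chosen and $N := N_\alpha(\omega;[0,T])$. On each sub-interval $[\tau_i,\tau_{i+1}]$ I would invoke (the proof of) Lemma \ref{lemma:est_rough_integral} with $z=\Phi$, using the \emph{local} version in which $N_\alpha(\omega;[\tau_i,\tau_{i+1}]) = 0$ so that $\|R^y\|_{p/2-\omega;[\tau_i,\tau_{i+1}]} \leq C$ is a universal constant; together with the elementary bounds $\|\Phi'\|_{\infty} \leq \|\nu\|_\infty\|\Phi\|_\infty$ and $\|\Phi'\|_{p-\omega;[\tau_i,\tau_{i+1}]} \leq C(\|\Phi\|_{p-\omega;[\tau_i,\tau_{i+1}]} + \omega^{1/p}\|\Phi\|_{\infty;[\tau_i,\tau_{i+1}]})$ coming from the chain rule, this produces an inequality in which the $\Phi$-dependent terms on the right-hand side carry a positive power of $\omega(\tau_i,\tau_{i+1}) \leq \alpha$. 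Taking $\alpha$ small enough (depending only on $p$ and the constants in Lemma \ref{lemma:est_rough_integral}) allows those contributions to be absorbed into the left-hand side, yielding a local estimate of the form
\[
\|\Phi\|_{\infty;[\tau_i,\tau_{i+1}]} + \alpha^{1/p}\|\Phi\|_{p-\omega;[\tau_i,\tau_{i+1}]} + \alpha^{2/p}\|R^\Phi\|_{p/2-\omega;[\tau_i,\tau_{i+1}]} \leq \kappa\,|\Phi_{\tau_i}|
\]
for a universal $\kappa = \kappa(p) > 1$. In particular $|\Phi_{\tau_{i+1}}| \leq \kappa|\Phi_{\tau_i}|$, and iterating gives $|\Phi_{\tau_i}| \leq \kappa^i|\Phi_0| \leq \exp(C(N+1))|\Phi_0|$.

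Global bounds on $\|\Phi\|_\infty$, $\|\Phi\|_{p-\omega}$, $\|\Phi'\|_\infty$, $\|\Phi'\|_{p-\omega}$ and $\|R^\Phi\|_{p/2-\omega}$ are then assembled from the local bounds via superadditivity of $\omega$ and the standard concatenation formulas for controlled-path norms across the $N+1$ sub-intervals; combining with the exponential estimate on $|\Phi_{\tau_i}|$ yields the claimed $(1+|\Phi_0|)\exp(CN_\alpha(\omega;[0,T]))$ bound (the $+1$ absorbs universal constants not scaling with $\Phi_0$). The treatment of $\Psi$ is essentially the same: its equation $d\Psi = -\Psi\,\nu(y)\,d\mathbf{Z}$ is linear of the same structure, with the integrand multiplied from the right rather than the left, and the argument above adapts with only notational changes; alternatively, one can view $\Psi^{\!\top}$ as the solution of a left-multiplied linear RDE and reduce to the case of $\Phi$. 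The main technical obstacle I anticipate is precisely the absorption step: the statement of Lemma \ref{lemma:est_rough_integral} contains a global factor $(1+N_\alpha(\omega;[0,T]))$ that would obstruct absorption if invoked as a black box, so one must retrace its proof on each small sub-interval in order to replace this global factor by a universal constant before the smallness of $\alpha$ can be used.
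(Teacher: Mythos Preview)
Your argument is correct, but it takes a somewhat different route from the paper's proof. The paper does not re-derive the exponential bound on $\|\Phi\|_{\infty}+\|\Phi\|_{p-\omega}$ by greedy iteration; instead it simply \emph{assumes} an a priori bound $K$ on this quantity, carries out the absorption argument for $\|R^{\Phi}\|_{p/2-\omega}$ alone (keeping the global factor $(1+N_{\alpha}(\omega;[0,T]))$ on the right-hand side, which is harmless since it multiplies $K$ rather than $\|R^{\Phi}\|$), concatenates via the mechanism of Lemma~\ref{lemma:rem_RDE_sol} to obtain $\|R^{\Phi}\|_{p/2-\omega}\leq CK(1+N_{\alpha})^{2}$, and finally invokes \cite[Section 5]{FR13} for the value $K=C(1+|\Phi_{0}|)\exp(CN_{\alpha}(\omega;[0,T]))$. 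Your approach is more self-contained in that it effectively \emph{reproves} the linear-RDE estimate from \cite{FR13} by the local absorption and the iteration $|\Phi_{\tau_{i+1}}|\leq\kappa|\Phi_{\tau_{i}}|$; this is perfectly fine, and your observation that on each greedy sub-interval the factor $N_{\alpha}(\omega;[\tau_{i},\tau_{i+1}])$ vanishes (so $\|R^{y}\|_{p/2-\omega;[\tau_{i},\tau_{i+1}]}$ is a universal constant) is exactly what makes the absorption of the $\Phi$-terms possible. The paper's version is shorter because it outsources the $\|\Phi\|_{\infty}$ and $\|\Phi\|_{p-\omega}$ bounds to the literature; yours has the advantage of being a single unified argument.
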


  \begin{proof}
   Note that $\Phi$ is controlled by $Z$ with derivative
   \begin{align}
    (\Phi_t)' = \nu(y_t)\Phi_t.
   \end{align}
   Using boundedness of $\nu$ and its derivative and our assumptions on $\omega$, this implies that
   \begin{align*}
    \| \Phi' \|_{\infty;[s,t]} \leq C \| \Phi \|_{\infty;[s,t]} \quad \text{and} \quad  \| \Phi' \|_{p-\omega;[s,t]} \leq C(\| \Phi \|_{p-\omega;[s,t]} + \| \Phi \|_{\infty;[s,t]})
   \end{align*}
    for all $s < t$, therefore it is enough to bound $\Phi$ to obtain bounds for $\Phi'$. Let $K$ be a constant such that
    \begin{align*}
     \| \Phi \|_{p-\omega;[0,T]} + \| \Phi \|_{\infty;[0,T]} \leq K.
    \end{align*}
    Let $\alpha > 0$ and choose $s < t$ such that $\omega(s,t) \leq \alpha$. Using Lemma \ref{lemma:est_rough_integral}, we have for sufficiently small $\alpha$
    \begin{align*}
     \| R^{\Phi} \|_{p/2-\omega;[s,t]} \leq C \alpha^{1/p} \| R^{\Phi} \|_{p/2-\omega;[s,t]} + C \alpha^{1/p} K(1 + N_{\alpha}( \omega ; [0,T])) + CK.
    \end{align*}
    Choosing $\alpha$ smaller if necessary, we may assume that $C \alpha^{1/p} \leq 1/2$ and we therefore obtain
    \begin{align*}
      \| R^{\Phi} \|_{p/2-\omega;[s,t]} \leq 2C \alpha^{1/p} K(1 + N_{\alpha}( \omega ; [0,T])) + 2CK \leq CK(1 + N_{\alpha}( \omega ; [0,T])).
    \end{align*}
    Using the same strategy as at the end of the proof of Lemma \ref{lemma:rem_RDE_sol}, we can conclude that
    \begin{align*}
     \| R^{\Phi} \|_{p/2-\omega;[0,T]} \leq CK(1 + N_{\alpha}( \omega ; [0,T]))^2.
    \end{align*}
    Using the results about linear rough differential equations in \cite[Section 5]{FR13}, we see that we can choose 
    \begin{align*}
     K = C(1 + |\Phi_0|)\exp(C N_{\alpha}( \omega ; [0,T]))
    \end{align*}
    and the claim follows for $R^{\Phi}$. The estimates for $\Phi$ can either be obtained by a direct calculation similar to the one performed in Lemma \ref{lemma:est_rough_integral}, but also follow from the results proven for linear rough differential equations in \cite[Section 5]{FR13}. The estimates for $\Psi$ can be obtained in exactly the same way.
\end{proof}

  \begin{lemma}\label{lemma:var_of_const_est}
   Let $y$ be a solution to \eqref{eqn:mult_RDE}. Let $\zeta \colon [0,T] \to L(\hat{W}, W)$ be of the form 
   \begin{align*}
    \zeta_t = \int_0^t \hat{\nu} (y_u)(d \mathbf{Z}_u) \hat{z}_u 
   \end{align*}
   for some $\hat{\nu} \colon W \to L(U, L(W_1, W))$ and $\hat{z} \colon [0,T] \to L(\hat{W}, W_1)$. Assume that $\hat{z}$ is controlled by $Z$. Consider
   \begin{align*}
    z_t = \Phi_t \left( z_0 + \int_0^t \Psi_u\, d\zeta_u \right), \quad z_0 \in L(\hat{W}, W)
   \end{align*}
   with $\Phi, \Psi$ as in Lemma \ref{lemma:jabobian_estimates} where we assume in addition that $\Phi_0 = \Psi_0 = \operatorname{Id}$. Assume that  
   \begin{align*}
    \|y \|_{p-\omega} \vee \|\mathbf{Z} \|_{p-\omega} \leq 1 \quad \text{and} \quad \| \hat{\nu} \|_{\mathcal{C}^2_b} \leq 1.
   \end{align*}
   Let $\kappa \geq 1$ be a constant such that
   \begin{align*}
      \|\hat{z}\|_{\infty} + \|\hat{z}\|_{p-\omega} + \|\hat{z}'\|_{\infty} + \|\hat{z}'\|_{p-\omega} + \|R^{\hat{z}}\|_{p/2-\omega} \leq \kappa. 
   \end{align*}
   Then $z$ is controlled by $Z$ and there are constants $C > 0$ and $\alpha > 0$ depending only on $p$ such that
    \begin{align*}
    &\| z \|_{\infty} + \| z \|_{p-\omega} + \| z' \|_{\infty} + \| z' \|_{p-\omega} + \| R^{z} \|_{p/2-\omega} \\
    &\quad \leq C \kappa (1 + |z_0|)(1 + \omega(0,T)^{1/p} + \omega(0,T)^{4/p} )\exp(C N_{\alpha}( \omega ; [0,T])).
    \end{align*}

  \end{lemma}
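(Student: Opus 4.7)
The plan is to build $z$ from its defining ingredients and to estimate it by chaining together the three preceding lemmas. Setting $I_t := \int_0^t \Psi_u\, d\zeta_u$, we have $z_t = \Phi_t(z_0 + I_t)$, and the proof proceeds in four steps: bound $\zeta$, bound $\Phi$ and $\Psi$, bound $I$, then bound the product $z$.

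First, apply Lemma~\ref{lemma:est_rough_integral} with $\nu = \hat\nu$, $z = \hat z$ and $\zeta_0 = 0$. Using the hypothesis that the five controlled-path quantities of $\hat z$ are bounded by $\kappa$, together with $\|y\|_{p-\omega} \vee \|\mathbf{Z}\|_{p-\omega}\le 1$, one obtains that $\zeta$ is controlled by $Z$ with Gubinelli derivative $\zeta'_u = \hat\nu(y_u)\hat z_u$, and that each of $\|\zeta\|_\infty, \|\zeta\|_{p-\omega}, \|\zeta'\|_\infty, \|\zeta'\|_{p-\omega}, \|R^\zeta\|_{p/2-\omega}$ on $[0,T]$ is bounded by a constant multiple of $\kappa(1 + N_\alpha(\omega;[0,T]))$ times a polynomial in $\omega(0,T)^{1/p}$. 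Secondly, Lemma~\ref{lemma:jabobian_estimates} applied with $\Phi_0 = \Psi_0 = \operatorname{Id}$ provides the analogous five quantities for $\Phi$ and for $\Psi$, each bounded by $C\exp(C N_\alpha(\omega;[0,T]))$.

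Next, the rough integral $I_t = \int_0^t \Psi_u\, d\zeta_u$ is well defined by Theorem~\ref{thm:rough_integral}, since both $\Psi$ and $\zeta$ are controlled by $Z$. Expanding $\zeta_{s,t} = \zeta'_s Z_{s,t} + R^\zeta_{s,t}$ inside the Sewing approximation $I_{s,t} \approx \Psi_s \zeta_{s,t} + \Psi'_s \zeta'_s \mathbb{Z}_{s,t}$ identifies the Gubinelli derivative $I'_t = \Psi_t \zeta'_t = \Psi_t\hat\nu(y_t)\hat z_t$ and the remainder
\begin{equation*}
  R^I_{s,t} = \Psi_s R^\zeta_{s,t} + \Psi'_s \zeta'_s \mathbb{Z}_{s,t} + \bigl[\, I_{s,t} - \Psi_s \zeta_{s,t} - \Psi'_s \zeta'_s \mathbb{Z}_{s,t}\,\bigr].
\end{equation*}
The first two terms are controlled by the bounds from the previous step together with $\|\mathbb{Z}\|_{p/2-\omega}\le 1$; the bracketed term is exactly what Theorem~\ref{thm:rough_integral} estimates by $\bigl(\|\Psi\|_{p-\omega}\|R^\zeta\|_{p/2-\omega} + \|\mathbb{Z}\|_{p/2-\omega}\|\Psi'\zeta'\|_{p-\omega}\bigr)\omega(s,t)^{3/p}$. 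Combining these yields bounds on each of the five controlled-path norms of $I$ of precisely the form appearing in the conclusion.

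Finally, since $z = \Phi(z_0 + I)$ is the product of two controlled paths, the Leibniz rule for controlled paths gives $z'_t = \Phi'_t(z_0 + I_t) + \Phi_t I'_t$ and $R^z_{s,t} = \Phi_s R^I_{s,t} + R^\Phi_{s,t}(z_0 + I_s) + \Phi_{s,t}\, I_{s,t}$. Inserting the bounds derived in the previous three steps and using that the exponential $\exp(C N_\alpha(\omega;[0,T]))$ absorbs any polynomial factor in $(1+N_\alpha)$ once $C$ is enlarged, the assertion follows; the factor $(1+|z_0|)$ enters only through the zeroth-order norm of $z_0 + I$. The main technical obstacle is the combinatorial bookkeeping of the powers of $\omega(0,T)^{1/p}$ generated at each multiplication: specifically, the term $\omega(0,T)^{4/p}$ in the final bound arises by pairing the Sewing contribution $\omega^{3/p}$ of Theorem~\ref{thm:rough_integral} with the additional $\omega^{1/p}$ coming from the $p$-variation norm of $\Psi$ or $\zeta'$, while all other exponents are either $\le 1/p$ or immediately absorbed into the universal constant.
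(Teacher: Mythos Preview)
Your proposal is correct and follows essentially the same route as the paper: both proofs feed Lemma~\ref{lemma:est_rough_integral} (for $\zeta$), Lemma~\ref{lemma:jabobian_estimates} (for $\Phi,\Psi$), and the rough-integral estimate of Theorem~\ref{thm:rough_integral} into a decomposition of $R^z$. The only organizational difference is that you introduce $I_t=\int_0^t\Psi_u\,d\zeta_u$ as an intermediate controlled path and then apply the Leibniz rule for the product $\Phi\cdot(z_0+I)$, whereas the paper expands $R^z_{s,t}$ directly, exploiting $\Phi_t\Psi_t=\operatorname{Id}$ and the explicit form $\zeta'_s=\hat\nu(y_s)\hat z_s$ to arrive at the three-term decomposition
\[
R^z_{s,t}=\Phi_t\Bigl(\int_s^t\Psi_u\,d\zeta_u-\Psi_s\zeta_{s,t}+\Psi_s R^\zeta_{s,t}\Bigr)+R^\Phi_{s,t}(z_0+I_s)+(\Phi_t-\Phi_s)\Psi_s\hat\nu(y_s)(Z_{s,t})\hat z_s,
\]
and then estimates each piece separately. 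Your Leibniz decomposition $R^z_{s,t}=\Phi_s R^I_{s,t}+R^\Phi_{s,t}(z_0+I_s)+\Phi_{s,t}I_{s,t}$ regroups the same terms slightly differently; the resulting bounds are identical. Your explanation of where the $\omega(0,T)^{4/p}$ appears is also consistent with the paper, which obtains it from the term $\omega(0,s)^{3/p}\|R^\zeta\|_{p/2-\omega}$ inside the estimate for $R^\Phi_{s,t}(z_0+I_s)$, using that $\|R^\zeta\|_{p/2-\omega}$ itself carries an $\omega^{1/p}(1+N_\alpha)$ contribution from Lemma~\ref{lemma:est_rough_integral}.
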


  \begin{proof}
   It is clear that $z$ is controlled by $Z$, and the Gubinelli derivative is given by
   \begin{align}
    \begin{split}\label{eqn:Gub_deriv_sol}
    z'_t &= \Phi'_t \left( z_0 + \int_0^t \Psi_u\, d\zeta_u \right) + \Phi_t( \Psi_t \zeta'_t) = \nu(y_t)\Phi_t \left( z_0 + \int_0^t \Psi_u\, d\zeta_u \right) + \hat{\nu} (y_t) \hat{z}_t \\
    &= \nu(y_t) z_t + \hat{\nu}(y_t) \hat{z}_t
      \end{split}
   \end{align}
   where we used Theorem \ref{thm:rough_integral}, $\zeta'_t = \hat{\nu}(y_t)\hat{z}_t$ and the fact that $\Psi_t = (\Phi_t)^{-1}$. For $s < t$, we therefore obtain
   \begin{align*}
    R^z_{s,t} &= z_{s,t} - z'_s Z_{s,t} \\
    &= \Phi_t \int_s^t \Psi_u\, d \zeta_u + \Phi_{s,t} z_s - \nu(y_s)(Z_{s,t}) z_s - \hat{\nu}(y_s)(Z_{s,t}) \hat{z}_s \\
    &= \Phi_t \left( \int_s^t \Psi_u\, d \zeta_u - \Psi_s \zeta_{s,t} + \Psi_s R^{\zeta}_{s,t} \right) + \Phi_t \Psi_s \zeta'_s Z_{s,t} + \Phi_{s,t} z_s - \Phi'_s Z_{s,t} z_s - \hat{\nu}(y_s)(Z_{s,t}) \hat{z}_s  \\
    &= \Phi_t \left( \int_s^t \Psi_u\, d \zeta_u - \Psi_s \zeta_{s,t} + \Psi_s R^{\zeta}_{s,t} \right) + R^{\Phi}_{s,t} z_s +  (\Phi_t - \Phi_s) \Psi_s \hat{\nu}(y_s)(Z_{s,t}) \hat{z}_s
   \end{align*}
    and by the triangle inequality,
   \begin{align}
    \begin{split}\label{eqn:rough_int_inhom}
    |R^z_{s,t}| &\leq \left| \Phi_t\left( \int_s^t \Psi_u\, d\zeta_u - \Psi_s \zeta_{s,t} + \Psi_s R_{s,t}^{\zeta} \right) \right| + \left| R^{\Phi}_{s,t} \left(z_0 + \int_0^s \Psi_u\, d\zeta_u \right) \right| \\
    &\quad + |(\Phi_t - \Phi_s) \Psi_s \hat{\nu}(y_s)(Z_{s,t})\hat{z}_s|.
      \end{split}
   \end{align}
   For the first term on the right hand side in \eqref{eqn:rough_int_inhom}, we can use the estimate in Theorem \ref{thm:rough_integral} to see that
   \begin{align*}
    &\left| \Phi_t\left( \int_s^t \Psi_u\, d\zeta_u - \Psi_s \zeta_{s,t} + \Psi_s R_{s,t}^{\zeta} \right) \right| \\
    \leq\ &C \|\Phi\|_{\infty;[s,t]} \omega(s,t)^{2/p} \Big( (\|R^{\zeta}\|_{p/2-\omega;[s,t]} + \|\Psi' \zeta'\|_{p-\omega;[s,t]})\omega(s,t)^{1/p} + \|\Psi' \|_{\infty;[s,t]} \|\zeta' \|_{\infty;[s,t]} \\
    &\qquad  + \|\Psi \|_{\infty;[s,t]} \|R^{\zeta}\|_{p/2-\omega;[s,t]} \Big).
    \end{align*}
    Note first that
    \begin{align*}
     \|\Psi' \zeta'\|_{p-\omega;[s,t]} \leq \|\Psi'\|_{p-\omega;[s,t]} \|\zeta'\|_{\infty;[s,t]} + \|\Psi' \|_{\infty;[s,t]} \|\zeta'\|_{p-\omega;[s,t]}
    \end{align*}
    and 
    \begin{align*}
     \|\zeta'\|_{\infty} + \|\zeta'\|_{p-\omega} \leq C \kappa.
    \end{align*}
    From Lemma \ref{lemma:est_rough_integral}, it follows that
    \begin{align*}
     \|R^{\zeta}\|_{p/2-\omega;[s,t]} \leq C \kappa \left( \omega(s,t)^{1/p} (1 + N_{\alpha}( \omega ; [0,T])) + 1 \right).
    \end{align*}
    Using the estimates for $\Phi$ and $\Psi$ in Lemma \ref{lemma:jabobian_estimates}, we therefore obtain
    \begin{align*}
     \left| \Phi_t\left( \int_s^t \Psi_u\, d\zeta_u - \Psi_s \zeta_{s,t} + \Psi_s R_{s,t}^{\zeta} \right) \right| \leq C \kappa \omega(s,t)^{2/p} (1 + \omega(0,T)^{1/p} + \omega(0,T)^{2/p} ) \exp(C N_{\alpha}( \omega ; [0,T])) 
    \end{align*}
    for all $s < t$. For the second summand in \eqref{eqn:rough_int_inhom}, we can again use Theorem \ref{thm:rough_integral} to estimate
    \begin{align*}
     \left| R^{\Phi}_{s,t} \left(z_0 + \int_0^s \Psi_u\, d\zeta_u \right) \right| &\leq C \omega(s,t)^{2/p} \| R^{\Phi} \|_{p/2-\omega;[s,t]} \\
     &\quad \times \Big( |z_0| + \omega(0,s)^{3/p}( \|R^{\zeta}\|_{p/2-\omega;[0,s]} + \|\Psi' \zeta' \|_{p-\omega;[0,s]}) \\
     &\qquad + \|\zeta\|_{\infty;[0,s]} + \omega(0,s)^{2/p} |\hat{z}_0|  \Big)
    \end{align*}
    using $\Psi_0 = \operatorname{Id}$, $\Psi'_0 = \nu(y_0)$ and $\zeta'_0 = \hat{\nu}(y_0)\hat{z}_0$. With Lemma \ref{lemma:est_rough_integral}, we obtain
    \begin{align*}
     \|\zeta\|_{\infty;[0,s]} \leq \omega(0,T)^{1/p} \|\zeta\|_{p-\omega} \leq C \kappa (\omega(0,T)^{1/p} + \omega(0,T)^{3/p} )(1 + N_{\alpha}( \omega ; [0,T])).
    \end{align*}
    As above, we obtain the bound
    \begin{align*}
     \left| R^{\Phi}_{s,t} \left(z_0 + \int_0^s \Psi_u\, d\zeta_u \right) \right| \leq C \kappa \omega(s,t)^{2/p} (1 + |z_0|)(1 + \omega(0,T)^{1/p} + \omega(0,T)^{4/p} ) \exp(C N_{\alpha}( \omega ; [0,T])) 
    \end{align*}
    for all $s < t$. For the third term in \eqref{eqn:rough_int_inhom}, we have
    \begin{align*}
     |(\Phi_t - \Phi_s) \Psi_s \hat{\nu} (y_s)(Z_{s,t}) \hat{z}_s| &\leq \kappa \omega(s,t)^{2/p} \| \Phi\|_{p-\omega;[s,t]} \|\Psi\|_{\infty;[0,s]} \\
     &\leq C \kappa \omega(s,t)^{2/p} \exp(C N_{\alpha}( \omega ; [0,T])).
    \end{align*}
    Using all these estimates in \eqref{eqn:rough_int_inhom}, we can conclude that
    \begin{align*}
     \| R^z \|_{p/2-\omega} \leq C \kappa (1 + |z_0|)(1 + \omega(0,T)^{1/p} + \omega(0,T)^{4/p} )\exp(C N_{\alpha}( \omega ; [0,T])).
    \end{align*}
    We proceed with $z$. For $s <t$,
    \begin{align*}
     |z_t - z_s| \leq |\Phi_t| \left| \int_s^t \Psi_u\, d\zeta_u \right| + |\Phi_t - \Phi_s| \left| z_0 + \int_0^s \Psi_u\, d\zeta_u \right|
    \end{align*}
    and as before, we obtain the estimate
    \begin{align*}
     \| z \|_{p-\omega} \leq C \kappa (1 + |z_0|)(1 + \omega(0,T)^{1/p} + \omega(0,T)^{4/p} )\exp(C N_{\alpha}( \omega ; [0,T])).
    \end{align*}
    Similarly,
    \begin{align*}
     \| z \|_{\infty} \leq C \kappa (1 + |z_0|)(1 + \omega(0,T)^{1/p} + \omega(0,T)^{4/p} )\exp(C N_{\alpha}( \omega ; [0,T])).
    \end{align*}
    From \eqref{eqn:Gub_deriv_sol}, we see that
   \begin{align*}
    \|z'\|_{\infty;[s,t]} \leq C(\|z\|_{\infty;[s,t]} + \|\hat{z}\|_{\infty;[s,t]})
   \end{align*}
   and
   \begin{align*}
    \|z'\|_{p-\omega;[s,t]} \leq C(\|z\|_{\infty;[s,t]} + \|z\|_{p-\omega;[s,t]} + \|\hat{z}\|_{\infty;[s,t]} + \|\hat{z}\|_{p-\omega;[s,t]})
   \end{align*}
   for all $s<t$, therefore the same estimates hold for $z'$. This proves the claim.
  \end{proof}

   \begin{lemma}\label{lemma:controlled_tensor}
   Let $z \in \mathscr{D}_{Z}^p([0,T],L(W,W_1))$ and $\hat{z} \in \mathscr{D}_{Z}^p([0,T],L(\hat{W},\hat{W}_1))$. Then $z \otimes \hat{z} \in \mathscr{D}_{Z}^p([0,T],L(W \otimes \hat{W}, W_1 \otimes \hat{W}_1))$ with derivative
   \begin{align*}
    (z \otimes \hat{z})_s'(u) = (z'_s u) \otimes \hat{z}_s + z_s \otimes (\hat{z}'_s u),\quad u \in U
   \end{align*}
  and remainder given by
  \begin{align*}
   R^{z \otimes \hat{z}}_{s,t} = z_{s,t} \otimes \hat{z}_{s,t} + R_{s,t}^z \otimes \hat{z}_s + z_s \otimes R_{s,t}^{\hat{z}}.
  \end{align*}

  \end{lemma}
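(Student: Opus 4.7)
The plan is to proceed by a direct Leibniz-type computation on the increments, which makes the identification of both the Gubinelli derivative and the remainder essentially algebraic. First I would write the increment of the tensor product via the standard add-and-subtract identity
\begin{equation*}
z_t \otimes \hat{z}_t - z_s \otimes \hat{z}_s = z_{s,t} \otimes \hat{z}_s + z_s \otimes \hat{z}_{s,t} + z_{s,t} \otimes \hat{z}_{s,t}.
\end{equation*}
Substituting the controlled-path decompositions $z_{s,t} = z'_s Z_{s,t} + R^z_{s,t}$ and $\hat{z}_{s,t} = \hat{z}'_s Z_{s,t} + R^{\hat{z}}_{s,t}$ into the first two summands and grouping the $Z_{s,t}$-linear contribution yields
\begin{equation*}
(z \otimes \hat{z})_{s,t} = \bigl[(z'_s Z_{s,t}) \otimes \hat{z}_s + z_s \otimes (\hat{z}'_s Z_{s,t})\bigr] + R^z_{s,t} \otimes \hat{z}_s + z_s \otimes R^{\hat{z}}_{s,t} + z_{s,t} \otimes \hat{z}_{s,t}.
\end{equation*}
The bracketed term is linear in $Z_{s,t}$ with coefficient exactly the proposed derivative $(z \otimes \hat{z})'_s(u) = (z'_s u) \otimes \hat{z}_s + z_s \otimes (\hat{z}'_s u)$, interpreted as an element of $L(U, L(W \otimes \hat{W}, W_1 \otimes \hat{W}_1))$, and what remains is precisely the stated expression for $R^{z \otimes \hat{z}}_{s,t}$.

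Next I would verify the required regularity: that $(z \otimes \hat{z})'$ has finite $p$-variation controlled by $\omega$, and that $R^{z \otimes \hat{z}}$ has finite $p/2$-variation controlled by $\omega$. For the derivative, one more add-and-subtract bounds $\|(z \otimes \hat{z})'\|_{p-\omega}$ by cross terms of the form $\|z'\|_{p-\omega} \|\hat{z}\|_\infty + \|z'\|_\infty \|\hat{z}\|_{p-\omega} + \|z\|_{p-\omega} \|\hat{z}'\|_\infty + \|z\|_\infty \|\hat{z}'\|_{p-\omega}$, all finite since $z, z', \hat{z}, \hat{z}'$ are controlled $p$-paths (hence continuous and of finite $p$-variation). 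For the remainder, the three summands are estimated individually using submultiplicativity of the tensor norm: $|z_{s,t} \otimes \hat{z}_{s,t}| \leq \|z\|_{p-\omega} \|\hat{z}\|_{p-\omega}\, \omega(s,t)^{2/p}$ by the inequality $|z_{s,t}| \leq \|z\|_{p-\omega} \omega(s,t)^{1/p}$ and the analogous bound for $\hat{z}$, while $|R^z_{s,t} \otimes \hat{z}_s| \leq \|R^z\|_{p/2-\omega} \|\hat{z}\|_\infty\, \omega(s,t)^{2/p}$, and the third term is bounded symmetrically.

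There is no real obstacle here: this lemma is essentially a Leibniz rule for controlled paths, and once the canonical identification $L(W, W_1) \otimes L(\hat{W}, \hat{W}_1) \hookrightarrow L(W \otimes \hat{W}, W_1 \otimes \hat{W}_1)$ and the ordering of tensor factors are kept straight, every step is forced. The only mildly delicate point is parsing the derivative formula correctly: $(z \otimes \hat{z})'_s$ is a linear map from $U$ into $L(W \otimes \hat{W}, W_1 \otimes \hat{W}_1)$, and the expression $(z'_s u) \otimes \hat{z}_s + z_s \otimes (\hat{z}'_s u)$ is to be read by first applying $z'_s$ (resp.~$\hat{z}'_s$) to $u \in U$ and then taking the tensor product with the remaining factor.
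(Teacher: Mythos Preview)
Your proposal is correct and is exactly the short calculation the paper has in mind; the paper's own proof consists of the single sentence ``Follows readily from a short calculation.'' You have carried out precisely that calculation, including the regularity checks the paper leaves implicit.
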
 

  \begin{proof}
   Follows readily from a short calculation.
  \end{proof}

    \begin{proof}[Proof of Theorem \ref{thm:bounds_flow_deriv}]\label{proof:flow_deriv_est}
  W.l.o.g, we may assume $s = 0$, otherwise we may replace $\mathbf{Z}$ by the time-shifted rough path $\mathbf{Z}_{s + \cdot}$ and solve the corresponding equation. Existence of the derivatives and their characterization as solutions to rough differential equations is a classical result, cf. \cite[Section 8.9]{FH14} and \cite[Section 11.2]{FV10}. It remains to prove the claimed bounds for $X^{(k)}$. Let us first assume that $\|V \|_{\mathcal{C}^{2 + k}_b} \leq 1/\kappa$ for some $\kappa \geq 1$ and that $\hat{\omega}$ is some control function for which
  \begin{align*}
   \|\mathbf{Z} \|_{p-\hat{\omega}} \vee \| X^{x}\|_{p-\hat{\omega}} \leq 1
  \end{align*}
  holds (the precise choice of $\kappa$ and $\hat{\omega}$ will be made later). We claim that in this case, there are constants $C$, $\alpha$ and $M$ depending on $p$ and $k$ such that
  \begin{align}
    \begin{split}\label{eqn:interm_bounds}
   &\| X^{(k)} \|_{\infty} + \| X^{(k)} \|_{p-\hat{\omega}} + \| (X^{(k)})' \|_{\infty} + \| (X^{(k)})' \|_{p-\hat{\omega}} + \| R^{X^{(k)}} \|_{p/2-\hat{\omega}} \\
    &\quad \leq C(1 + \hat{\omega}(0,T)^{1/p} + \hat{\omega}(0,T)^{M/p}) \exp(C N_{\alpha}( \hat{\omega} ; [0,T]))
      \end{split}
  \end{align} 
  holds. We prove the claim by induction. For $k = 1$, $X^{(1)}_t =: \Phi_t$ solves
  \begin{align*}
   X^{(1)}_t = \operatorname{Id} + \int_0^t DV(X^x_s)(d \mathbf{Z}_s) X^{(1)}_s
  \end{align*}
  and the bound \eqref{eqn:interm_bounds} follows from Lemma \eqref{lemma:jabobian_estimates}. Let $k \geq 2$ and assume that our claim holds for all $l = 1, \ldots, k-1$. It is easy to see that $X^{(k)}$ solves an inhomogeneous equation of the form
  \begin{align}\label{eqn:inhom_RDE_main_thm}
   X^{(k)}_t = \zeta_t + \int_0^t D V(X^{x}_s) X^{(k)}_s \, d \mathbf{Z}_s \in L((\R^n)^{\otimes k}, \R^n)
  \end{align}
  where $\zeta \colon [0,T] \to L((\R^n)^{\otimes k}, \R^n))$ can be written as
  \begin{align*}
   \zeta_t = \sum_{\stackrel{2 \leq l \leq k}{i_1 + \ldots + i_l = k}} \lambda_{i_1, \ldots, i_l} \int_0^t D^l V(X^x_s)(d\mathbf{Z}_s)(X^{(i_1)}_s \otimes \cdots \otimes X^{(i_l)}_s) =: \int_0^t \hat{\nu}(X^x_s)(d \mathbf{Z}_s) \hat{z}^k_s,
  \end{align*}
  the $\lambda_{i_1, \ldots, i_l}$ being integers which can be explicitly calculated using the Leibniz rule. Note that $\hat{z}^k$ is controlled by $Z$ by the induction hypothesis and Lemma \ref{lemma:controlled_tensor}, therefore the integrals are well defined. Moreover, the estimate \eqref{eqn:interm_bounds} holds for $\hat{z}^k$ instead of $X^{(k)}$ again by the induction hypothesis and Lemma \ref{lemma:controlled_tensor}. We also see that we can choose $\kappa \geq 1$ depending only on $k$ to obtain $\| \hat{\nu} \|_{\mathcal{C}^2_b} \leq 1$. The equation \eqref{eqn:inhom_RDE_main_thm} can be solved with the variation of constants method: making the ansatz $X^{(k)}_t = \Phi_t C_t$, $C_t \in L((\R^n)^{\otimes k}, \R^n))$,
we can conclude that $X^{(k)}$ can be written as   
\begin{align*}
 X^{(k)}_t = \Phi_t   \int_0^t \Phi_s^{-1} d\zeta_s .
\end{align*}
  The claim \eqref{eqn:interm_bounds} for $X^{(k)}$ now follows from Lemma \ref{lemma:var_of_const_est}. We proceed with deducing the bound \eqref{eqn:bounds_derivatives} from \eqref{eqn:interm_bounds}. Note first that $X^x$ also solves the equation 
   \begin{align*}
    X_t^{x} = x + \int_0^t \tilde{V}(X^{x}_s)\, d\tilde{\mathbf{Z}}_s
   \end{align*}  
  where $\tilde{V} = V / (\kappa \|V\|_{\mathcal{C}^{2 + k}_b})$ and $\tilde{\mathbf{Z}} = (\kappa \|V\|_{\mathcal{C}^{2 + k}_b} Z, \kappa^2 \|V\|_{\mathcal{C}^{2 + k}_b}^2 \mathbb{Z})$. Clearly $\|\tilde{V}\|_{\mathcal{C}^{2 + k}_b} \leq 1/\kappa$, and
  \begin{align*}
   \hat{\omega}(s,t) := \| \tilde{\mathbf{Z}} \|_{p-\var;[s,t]}^p + \| X^x \|_{p-\var;[s,t]}^p =  \kappa^p \|V\|_{\mathcal{C}^{2 + k}_b}^p \| \mathbf{Z} \|_{p-\var;[s,t]}^p + \| X^x \|_{p-\var;[s,t]}^p
  \end{align*}
  is a valid choice for $\hat{\omega}$. Therefore, \eqref{eqn:interm_bounds} holds for $X^{(k)}$ with this $\hat{\omega}$. Next, \cite[Corollary 3]{FR13} implies that there is a constant depending on $p$ such that
  \begin{align*}
   N_{1}(X^x ;[0,T]) \leq C(N_{1}(\tilde{\mathbf{Z}} ;[0,T]) + 1).
  \end{align*} 
  Together with \cite[Lemma 4]{FR13}, this implies that
  \begin{align*}
    \hat{\omega}(0,T)^{1/p} &\leq \| \tilde{\mathbf{Z}} \|_{p-\var;[0,T]} + \| X^x \|_{p-\var;[0,T]} \leq N_{1}(\tilde{\mathbf{Z}} ;[0,T]) + N_{1}(X^x ;[0,T]) + 2 \\
    &\leq C(N_{1}(\tilde{\mathbf{Z}} ;[0,T]) + 1) \leq C \exp(N_{1}(\tilde{\mathbf{Z}} ;[0,T])),
  \end{align*}
  therefore also
  \begin{align*}
   \hat{\omega}(0,T)^{1/p} + \hat{\omega}(0,T)^{M/p} \leq C \exp(C N_{1}(\tilde{\mathbf{Z}} ;[0,T])).
  \end{align*}
  From \cite[Lemma 3]{FR13} and \cite[Lemma 5]{BFRS16}, we see that
  \begin{align*}
   N_{\alpha}(\hat{\omega};[0,T]) &\leq 2 N_{\alpha}(\tilde{\mathbf{Z}};[0,T]) +  2  N_{\alpha}(X^x ;[0,T]) + 2 \leq C(N_{1}(\tilde{\mathbf{Z}} ;[0,T]) + 1)
  \end{align*}
  for a constant $C$ depending on $\alpha$ and $p$, and therefore on $p$ only. Using these estimates, \eqref{eqn:interm_bounds} implies that there is a constant $C$ depending on $p$ and $k$ such that
  \begin{align*}
    \| X^{(k)} \|_{p-\hat{\omega}} \leq C \exp \left( C N_{1}( \tilde{\mathbf{Z}} ; [0,T]) \right)
  \end{align*}
  holds. Using \cite[Lemma 1 and Lemma 3]{FR13}, we see that
  \begin{align*}
    N_{1}( \mathbf{\tilde{Z}} ; [0,T])  \leq \kappa^p \|V\|_{\mathcal{C}^{2 + k}_b}^p (2 N_{1}( \mathbf{Z} ; [0,T]) + 1)
  \end{align*}
  which shows that
  \begin{align*}
    \| X^{(k)} \|_{p-\hat{\omega}} \leq C \exp \left( C \|V\|_{\mathcal{C}^{2 + k}_b}^p( N_{1}( \mathbf{Z} ; [0,T]) + 1) \right).
  \end{align*}
%
%
  Note that for every $s \leq t$, the estimate for $X^{x}$ in \cite[Theorem 10.14]{FV10} implies that
  \begin{align*}
    \hat{\omega}(s,t) &= \| \tilde{\mathbf{Z}} \|_{p-\var;[s,t]}^p + \| X^x \|_{p-\var;[s,t]}^p \leq  \| \tilde{\mathbf{Z}} \|_{p-\var;[s,t]}^p + C  \| \tilde{\mathbf{Z}} \|_{p-\var;[s,t]}^p(1 +  \| \tilde{\mathbf{Z}} \|_{p-\var;[0,T]}^p) \\
    &\leq C \kappa^p \|V\|_{\mathcal{C}^{2 + k}_b}^p \left( 1 + \kappa^p \|V\|_{\mathcal{C}^{2 + k}_b}^p \omega(0,T) \right)  \omega(s,t),
  \end{align*}
  therefore
  \begin{align*}
   \| X^{(k)} \|_{p-\omega} \leq C \|V\|_{\mathcal{C}^{2 + k}_b} \left( 1 + \|V\|_{\mathcal{C}^{2 + k}_b} \omega(0,T)^{1/p} \right) \| X^{(k)} \|_{p-\hat{\omega}}
  \end{align*}
  and we can use again the estimates above to conclude \eqref{eqn:bounds_derivatives}. The estimate \eqref{eqn:bounds_derivatives_sup} just follows from
  \begin{equation*}
   \|X^{(k)} \|_{\infty} \leq  \| X^{(k)} \|_{p-\omega}   \| \mathbf{Z}
   \|_{p-\text{var}} + |X^{(k)}_0|. \qedhere
  \end{equation*}
\end{proof}


\bibliographystyle{plain}
\bibliography{rpde}

\end{document}